\newcommand\blfootnote[1]{%
  \begingroup
  \renewcommand\thefootnote{}\footnote{#1}%
  \addtocounter{footnote}{-1}%
  \endgroup
}
\long\def\/*#1*/{}
\newsavebox\myboxA
\newsavebox\myboxB
\newlength\mylenA
\newcommand*\xoverline[2][0.75]{%
    \sbox{\myboxA}{$\m@th#2$}%
    \setbox\myboxB\null
    \ht\myboxB=\ht\myboxA%
    \dp\myboxB=\dp\myboxA%
    \wd\myboxB=#1\wd\myboxA
    \sbox\myboxB{$\m@th\overline{\copy\myboxB}$}
    \setlength\mylenA{\the\wd\myboxA}
    \addtolength\mylenA{-\the\wd\myboxB}%
    \ifdim\wd\myboxB<\wd\myboxA%
       \rlap{\hskip 0.5\mylenA\usebox\myboxB}{\usebox\myboxA}%
    \else
        \hskip -0.5\mylenA\rlap{\usebox\myboxA}{\hskip 0.5\mylenA\usebox\myboxB}%
    \fi}
\numberwithin{equation}{section}
\newcommand{\cG}{\mathcal{G}}
\newcommand{\cE}{\mathcal{E}}
\def\sss{\scriptscriptstyle}
\newcommand{\ubar}[1]{\underaccent{\bar}{#1}}
\newcommand*{\Mtilde}[1]{\skew{5}{\tilde}{#1}}
\newcommand{\prob}[1]{\ensuremath{\mathbbm{P}\left(#1\right)}}
\newcommand{\expt}[1]{\ensuremath{\mathbbm{E}\left[#1\right]}}
\newcommand{\var}[1]{\ensuremath{\mathrm{Var}\left(#1\right)}}
\newcommand{\refl}[1]{\ensuremath{\mathrm{refl}(#1)}}
\newcommand{\floor}[1]{\ensuremath{\left\lfloor #1 \right\rfloor}}
\newcommand{\ind}[1]{\ensuremath{\mathbbm{1}_{\left\{#1\right\}}}}
\newcommand{\pto}{\ensuremath{\xrightarrow{\mathbbm{P}}}}
\newcommand{\dto}{\ensuremath{\xrightarrow{d}}}
\newcommand{\asto}{\ensuremath{\xrightarrow{\sss \mathrm{a.s.}}}}
\newcommand{\surp}[1]{\ensuremath{\mathrm{SP}(#1)}}
\newcommand{\diam}{\ensuremath{\mathrm{diam}}}
\newcommand{\PR}{\ensuremath{\mathbbm{P}}}
\newcommand{\tPR}{\ensuremath{\tilde{\mathbbm{P}}}}
\newcommand{\E}{\ensuremath{\mathbbm{E}}}
\newcommand{\tE}{\ensuremath{\tilde{\mathbbm{E}}}}
\newcommand{\R}{\ensuremath{\mathbbm{R}}}
\newcommand{\Q}{\ensuremath{\mathbbm{Q}}}
\newcommand{\N}{\ensuremath{\mathbbm{N}}}
\newcommand{\e}{\ensuremath{\mathrm{e}}}
\newcommand{\im}{\ensuremath{\mathrm{i}}}
\newcommand{\1}{\ensuremath{\mathbbm{1}}}
\newcommand{\OP}{\ensuremath{O_{\sss\PR}}}
\newcommand{\oP}{\ensuremath{o_{\sss\PR}}}
\newcommand{\thetaP}{\ensuremath{\Theta_{\sss\PR}}}
\newcommand{\CM}{\ensuremath{\mathrm{CM}_n(\boldsymbol{d})}}
\newcommand{\CMP}{\ensuremath{\mathrm{CM}_n(\boldsymbol{d},p_c(\lambda))}}
\newcommand{\shortarrow}{{\sss\downarrow}}
\newcommand{\dif}{\mathrm{d}}
\newcommand{\bld}[1]{\boldsymbol{#1}}
\newcommand{\rE}{\mathrm{E}}
\newcommand{\rCM}{\mathrm{CM}}
\newcommand{\fI}{\mathfrak{I}}
\newcommand{\xx}{\bld{x}}
\newcommand{\biS}{\mathbf{S}_{\infty}^\lambda}
\newcommand{\iS}{S_{\infty}^\lambda}
\newcommand{\rC}{\mathrm{C}}
\newcommand{\probc}[1]{\ensuremath{\tilde{\mathbbm{P}}\left(#1\right)}}
\newcommand{\exptc}[1]{\ensuremath{\tilde{\mathbbm{E}}\left[#1\right]}}
\newcommand{\sC}{\mathscr{C}}
\newcommand{\sCi}{\mathscr{C}_{\sss (i)}}
\newcommand{\sCil}{\mathscr{C}_{\sss (i)}}
\newcommand{\tell}{\tilde{\ell}}
\newcommand{\td}{\tilde{d}}
\newcommand{\cA}{\mathcal{A}}
\newcommand{\Unot}{\mathbb{U}^0_{\shortarrow}}
\newcommand{\SP}{\mathrm{SP}}
\newcommand{\Gp}{\mathcal{G}_n(p_n)}
\newcommand{\tV}{\tilde{V}}
\newcommand{\tS}{\tilde{S}}
\newcommand{\tA}{\tilde{A}}
\newcommand{\cI}{\mathcal{I}}
\newcommand{\sF}{\mathscr{F}}
\newcommand{\sV}{\mathscr{V}}
\newcommand{\cL}{\mathcal{L}}
\newcommand{\cR}{\mathcal{R}}
\newtheorem{theorem}{Theorem}
\newtheorem{algo}{Algorithm}
\newtheorem{lemma}[theorem]{Lemma}
\newtheorem{proposition}[theorem]{Proposition}
\newtheorem{assumption}{Assumption}
\newtheorem{remark}{Remark}
\newtheorem{fact}{Fact}
\newtheorem{defn}{Definition}
\begin{document}
\title{Critical percolation on scale-free random graphs:\\ New universality class for the configuration model}
\author{Souvik Dhara$^{1,2}$, Remco van der Hofstad$^{3}$, and Johan S.H. van Leeuwaarden$^4$}
\maketitle
\begin{abstract}
In this paper, we study the critical behavior of percolation on a configuration model with degree distribution satisfying an infinite second-moment condition, which includes power-law degrees with exponent $\tau \in (2,3)$.
It is well known that, in this regime, many canonical random graph models, such as the configuration model, are \emph{robust} in the sense that the giant component is not destroyed when the percolation probability stays bounded away from zero.
Thus, the critical behavior is observed when the percolation probability tends to zero with the network size, despite of the fact that the average degree remains bounded.

In this paper, we initiate the study of critical random graphs in the infinite second-moment regime by identifying the critical window for the configuration model.
We prove scaling limits for component sizes and surplus edges, and show that the maximum diameter the critical  components is of order $\log n$, which contrasts with the previous universality classes arising in the literature.
This introduces a third and novel universality class for the critical behavior of percolation on random networks, that is not covered by the multiplicative coalescent framework due to Aldous and Limic \cite{AL98}. 
We also prove concentration of the component sizes outside the critical window, and that a unique, complex \emph{giant component} emerges after the critical window.
This completes the picture for the percolation phase transition on the configuration model.
\end{abstract}

\blfootnote{\emph{Emails:} 
 \href{mailto:sdhara@mit.edu}{sdhara@mit.edu},
 \href{mailto:r.w.v.d.hofstad@tue.nl}{r.w.v.d.hofstad@tue.nl},
 \href{mailto:j.s.h.vanleeuwaarden@tilburguniversity.edu}{j.s.h.vanleeuwaarden@tilburguniversity.edu}} 
\blfootnote{$^1$Department of Mathematics, Massachusetts Institute of Technology}
\blfootnote{$^2$Microsoft Research}
\blfootnote{$^3$Department of Mathematics and Computer Science, Eindhoven University of Technology}
\blfootnote{$^4$Stochastic Operations Research, Tilburg University}
\blfootnote{2010 \emph{Mathematics Subject Classification.} Primary: 60C05, 05C80.}
\blfootnote{\emph{Keywords and phrases}. Critical percolation, configuration model, scale-free networks}
\blfootnote{\emph{Acknowledgment}. 
This project was supported by the Netherlands Organisation for Scientific Research (NWO) through Gravitation Networks grant 024.002.003. 
In addition, RvdH was supported by VICI grant 639.033.806.
We sincerely thank the referee for an extremely thorough review, and in particular for pointing out an error in the properties of the scaling limit. 
Also, we sincerely thank Shankar Bhamidi and Debankur Mukherjee for carefully reading the revised proof in  Section~\ref{sec:properties-exploration}. 
}

\section{Introduction}
Bond percolation, or simply percolation, refers to the random graph obtained by independently keeping each edge of a graph with some fixed probability $p$ (and deleting with probability $1-p$).
Percolation is a classical and important model in statistical physics and network science, as it serves as a canonical model for assessing robustness of a network when the edges of the underlying network are randomly damaged, and also as a basic model of vaccination for the prevention of an epidemic on networks. 
A detailed account of many of these applications can be found in \cite{Newman-book,Bar16}. 
From a theoretical perspective, percolation is one of the most  elementary models that exhibits a phase transition, i.e., there exist values $p_c = p_c(n)$ such that for $p>p_c(1+\varepsilon)$ and $\varepsilon>0$, the proportion of vertices in the largest connected component is bounded away from zero with high probability, whereas for $p<p_c(1-\varepsilon)$ this proportion becomes negligible. 
The critical behavior is observed when $p\approx p_c$, and  fascinating behavior starts to emerge for the percolation process around this critical value. 

It turns out that there is a window of values of $p$ where the component functionals show intermediate and unique behavior. For example, rescaled component functionals converge to non-degenerate scaling limits, in contrast to the fact that they always concentrate for other values of $p$.
Also, the large components in this window are structurally intermediate in the sense that neither there is a giant component with a growing number of cycles, nor do the components look like trees.
This regime is called the \emph{critical window} of the percolation phase-transition. 
Starting with the pioneering work of Aldous \cite{A97}, deriving scaling limits for critical component functionals has been the ground for an enormous literature with several interesting scaling-limit results over the past decades \cite{BHL10,BHL12,NP10a,NP10b,Jo10,DHLS15,DHLS16,R12,AP00,AL98}. 
We refer the reader to \cite[Chapter 1]{Dha18} and references therein for an elaborate discussion of the nature of this transition, and a literature overview.  

In the literature, two fundamentally different types of behavior have been proved for the scaling limits and the critical exponents associated to the critical window and component sizes depending on whether the asymptotic degree distribution satisfies a finite third-moment condition \cite{BHL10,DHLS15} or an infinite third - but a finite second-moment condition \cite{BHL12,DHLS16}.
However, the study of critical behavior in the infinite second-moment setting was an open question.

When the degree distribution is asymptotically a power-law with exponent $\tau\in (2,3)$, then the finite second-moment condition fails.
These networks are popularly known as \emph{scale-free networks} \cite{Bar16} in the literature.
Many real-world networks are observed to be scale-free~\cite{RGCN1,RTD-book,Newman-book,AB02}.
One of the well-known features of scale-free networks is that they are \emph{robust} under random edge-deletion, i.e., for any sequence $(p_n)_{n\geq 1}$ satisfying $\liminf_{n\to\infty} p_n > 0$, the graph obtained by performing percolation with probability $p_n$ is supercritical. 
This feature has been studied experimentally in~\cite{AJB00}, using heuristic arguments in~\cite{CEbAH00,CNSW00,DGM08,CbAH02} (see also \cite{braunstein2003optimal,braunstein2007optimal,Halvin05} in the context of optimal paths in the strong disorder regime), and mathematically in \cite{BR03}. 
Thus, in order to observe the percolation critical behavior, one needs to have $p_n \to 0$ with the network size, despite of the fact that the average degree of the network remains bounded.

In this paper, we initiate the study of critical behavior in the scale-free regime.
As a canonical random graph model on which percolation acts, we take the multigraph generated
by the configuration model. When the degree distribution satisfies a power-law with
exponent $\tau\in (2,3)$, it was heuristically argued in \cite{CbAH02,DGM08} that the critical value is $p_c \sim n^{-(3-\tau)/(\tau-1)}$, so that the critical window is given by the collection of values $p_c = p_c(\lambda)=\lambda n^{-(3-\tau)/(\tau-1)}$, where $\lambda>0$ indicates the location inside the critical window.
We establish that the scaling exponents from \cite{CbAH02,DGM08} are indeed true, and discuss asymptotics of component functionals inside the critical window.
We also show that $p_c = p_c(\lambda)=\lambda n^{-(3-\tau)/(\tau-1)}$ with $\lambda >0$ gives the right critical window, by showing that a giant component emerges at the end of the critical window ($\lambda\to\infty$), while components have a trivial star-like structure before the critical window ($\lambda \to 0$). 
The main contributions of this paper can be summarized as follows:

\paragraph*{Critical window.}
At criticality, we obtain scaling limits for the largest component sizes and surplus edges in a strong topology. 
The result displays a completely new universality class of scaling limits of critical components. 
The scaling limits here are different from the general multiplicative coalescent framework in \cite{AL98}. 
In particular, the limiting exploration process has bounded variation, so that the general tools from \cite{AL98} cannot be applied. 
We also study the diameter of these components and show that the maximum diameter is of order $\log n$.

\paragraph*{Near-critical behavior.} For $p_n= \lambda_n n^{-(3-\tau)/(\tau-1)}$ with $\lambda_n \to 0$,  the graph is subcritical and we show that the largest components sizes, rescaled by $n^\alpha p_n$, concentrate.
On the other hand, when $\lambda_n \to \infty$, the largest component size, rescaled by $n p_n ^{1/(3-\tau)}$, concentrates, and this is the \emph{unique giant component} in the sense that the size of the second largest component is much smaller than $n p_n ^{1/(3-\tau)}$. 
The nature of the emergence of this giant component for $p_n\gg p_c$ is markedly different compared to the universality classes in the $\tau\in (3,4)$ and the $\tau>4$ regimes, where the giant emerges when the percolation probability satisfies  $(p_n-p_c(\lambda_1)) \gg (p_c(\lambda_2) - p_c(\lambda_1))$, for some strictly positive $p_c$ and $-\infty<\lambda_1<\lambda_2<\infty$ \cite{HJL16}.

\paragraph*{Methods.}
Technically, analyzing percolation on random graphs like the configuration model is challenging, because in order to make Aldous's exploration process approach \cite{A97} work, one is required to keep track of many functionals of the unexplored part of the graph \cite{NP10b}, resulting in a high-dimensional exploration process.
This difficulty was circumvented in \cite{DHLS15,DHLS16} by using Janson's algorithm \cite{J09}. 
Unfortunately, Janson's algorithm does not work here due to the fact that the algorithm creates $n-o(n)$ degree-one vertices. 
Instead, we sandwich the percolated graph in between two configuration models, which yield the same scaling limits for the component sizes.
Also, in order to deduce scaling limits of the component sizes from that of the exploration process, we prove several properties of the limiting exploration process, which are interesting from an independent perspective.

\begin{remark}[Single-edge constraint] \normalfont 
In a parallel work \cite{BDHL18}, Bhamidi and the first two authors consider critical percolation on simple random graphs, i.e., random graphs having no multiple-edges, namely generalized random graphs. 
It turns out that the critical window there is $p_c \sim n^{-(3-\tau)/2} \gg n^{-(3-\tau)/(\tau-1)}$. 
This is a distinctive feature in the infinite second-moment case that never surfaced in the other two universality classes of critical random graphs. 
\end{remark}

\paragraph*{Organization of the paper.} 
In Section~\ref{sec:main-results}, we state our results precisely.
In Section~\ref{sec:model}, we give the precise definitions of the model and the scaling limits. Section~\ref{sec:discussion} is devoted to comments about the heuristics, and some important special cases.
In Section~\ref{sec:properties-exploration}, we study excursions of the limiting exploration process.
Section~\ref{sec:proofs} contains the proofs of the results at criticality, and in Section~\ref{sec:near-critical-proofs}, we analyze the near-critical regimes.

\section{Main results} \label{sec:main-results}
\subsection{The configuration model}\label{sec:model}
\subsubsection{Model description}
The configuration model generates random multigraphs with any given degree sequence. 
Consider $n$ vertices labeled by $[n]:=\{1,2,...,n\}$ and a non-increasing sequence of degrees $\boldsymbol{d} =\bld{d}_n = ( d_i )_{i \in [n]}$ such that $\ell_n = \sum_{i \in [n]}d_i$ is even. 
The configuration model on $n$ vertices having degree sequence $\boldsymbol{d}$ is constructed as follows \cite{B80,BC78}:
 \begin{itemize}
 \item[] Equip vertex $j$ with $d_{j}$ stubs, or \emph{half-edges}. Two half-edges create an edge once they are paired. Therefore, initially we have $\ell_n=\sum_{i \in [n]}d_i$ half-edges. Pick any half-edge and pair it with a uniformly chosen half-edge from the remaining unpaired half-edges and remove both these half-edges from the set of unpaired half-edges.
 Keep repeating the above procedure until all half-edges are paired. 
 \end{itemize}
  Let $\CM$ denote the graph constructed by the above procedure.
  Note that $\CM$ may contain self-loops and multiple edges. 
In fact, the probability that $\CM$ is a simple graph tends to zero in our setting with an infinite second-moment condition on the degree distribution \cite[Proposition 7.12]{RGCN1}. 
Before stating the main results about the configuration model, we set up some necessary notation.
\subsubsection{Notions of convergence and the limiting objects} 
\label{sec:notation}
To describe the main results of this paper, we need some definitions and notations. 
We use the Bachmann–Landau asymptotic notation $O(\cdot)$, $o(\cdot)$, $\Theta(\cdot)$ for large-$n$ asymptotics of real numbers. 
For $(a_n)_{n\geq 1}, (b_n)_{n\geq 1} \subset (0,\infty)$, we write $a_n\ll b_n$, $a_n \sim b_n$ and $a_n\gg b_n$ as a shorthand for $\lim_{n\to\infty}a_n/b_n = 0,1,\infty$, respectively. 
We often use $C$ as a generic notation for a positive constant whose value can be different in different lines.  
We also use the standard notation of $\xrightarrow{\sss\PR}$, and $\xrightarrow{\sss d}$ to denote convergence in probability and in distribution, respectively.
The topology needed for the  convergence in distribution will always be specified unless it is  clear from the context. 
 We say that a sequence of events $(\mathcal{E}_n)_{n\geq 1}$ occurs with high probability~(whp) with respect to the probability measures $(\mathbbm{P}_n)_{n\geq 1}$  when $\mathbbm{P}_n\big( \mathcal{E}_n \big) \to 1$. 
 Define $f_n = O_{\sss\mathbbm{P}}(g_n)$ when  $ ( |f_n|/|g_n| )_{n \geq 1} $ is tight; $f_n =o_{\sss\mathbbm{P}}(g_n)$ when $f_n/g_n  \xrightarrow{\sss\PR} 0 $; $f_n =\thetaP(g_n)$ if both $f_n=\OP(g_n) $ and $g_n=\OP(f_n)$. 
 Denote
\begin{equation}
\ell^p_{\shortarrow}:= \Big\{ \mathbf{x}= (x_i)_{i=1}^\infty \subset [0,\infty): \ x_{i+1}\leq x_i\ \forall i,\text{ and } \sum_{i=1}^{\infty} x_{i}^p < \infty \Big\}
\end{equation} with the $p$-norm metric $d(\mathbf{x}, \mathbf{y})= \big( \sum_{i=1}^{\infty} |x_i-y_i|^p \big)^{1/p}$. Let $\ell^2_{\shortarrow} \times \mathbbm{N}^{\infty}$  denote the product topology of $\ell^2_{\shortarrow}$ and $\mathbbm{N}^{\infty}$ with $\mathbbm{N}^{\infty}$ denoting the sequences on $\mathbbm{N}$ endowed with the product topology. Define also
\begin{equation}\mathbb{U}_{\shortarrow}:= \Big\{ ((x_i,y_i))_{i=1}^{\infty}\in  \ell^2_{\shortarrow} \times \mathbbm{N}^{\infty}: \sum_{i=1}^{\infty} x_iy_i < \infty \text{ and } y_i=0 \text{ whenever } x_i=0 \; \forall i   \Big\},
\end{equation}
endowed with the metric \begin{equation} \label{defn_U_metric}\mathrm{d}_{\mathbb{U}}((\mathbf{x}_1, \mathbf{y}_1), (\mathbf{x}_2, \mathbf{y}_2)):= \bigg( \sum_{i=1}^{\infty} (x_{1i}-x_{2i})^2 \bigg)^{1/2}+ \sum_{i=1}^{\infty} \big| x_{1i} y_{1i} - x_{2i}y_{2i}\big|. 
\end{equation} Further, let $\mathbb{U}^0_{\shortarrow} \subset \mathbb{U}_{\shortarrow}$ be given by \begin{equation}\mathbb{U}^0_{\shortarrow}:= \big\{((x_i,y_i))_{i=1}^{\infty}\in\mathbb{U}_{\shortarrow} : \text{ if } x_k = x_m, k \leq m,\text{ then }y_k \geq y_m\big\}.
\end{equation}  Let $(\mathbb{U}^0_{\shortarrow})^k$ denote the $k$-fold product space of $\mathbb{U}^0_{\shortarrow}$.

Throughout, we write $\mathbb{D}[0,\infty)$ to denote the space of c\`adl\`ag functions $[0,\infty)\mapsto \R$ equipped with the Skorohod $J_1$-topology. Also, let $\mathbb{D}_+[0,\infty) \subset \mathbb{D}[0,\infty)$ be the collection of functions with positive jumps only,  and $\mathbb{C}[0,\infty)\subset \mathbb{D}[0,\infty)$ be the collection of continuous functions. 
For any fixed $T>0$, $\mathbb{D}[0,T], \mathbb{D}_+[0,T], \mathbb{C}[0,T]$ are defined similarly for functions $[0,T]\mapsto \R$. 
For any function $f\in \mathbb{D}[0,\infty)$,  define $\ubar{f}(t)=\inf_{s\leq t}f(s)$.  
Note that 
$\ubar{f}$ is non-increasing. Moreover, 
\begin{eq} \label{continuity-ubar-f}
\ubar{f} \in \mathbb{C}[0,\infty), \quad \text{whenever} \quad f\in \mathbb{D}_+[0,\infty).
\end{eq} 
Indeed, if $\ubar{f}$ is discontinuous at some point $t$, then $\ubar{f}(t-) > \ubar{f}(t)$, but that would mean that $f$ has a negative jump of size $ \ubar{f}(t-) - \ubar{f}(t)$ at $t$. Thus \eqref{continuity-ubar-f} holds. 
Next, for any $f\in \mathbb{D}_+[0,\infty)$, define the zero set of $f$ by $\mathscr{Z}_f = \{t\geq 0: f(t) - \ubar{f}(t)=0\}$, and let $\mathrm{cl}(\mathscr{Z}_f)$ denote the closure of $\mathscr{Z}_f$. 
An interval $(l,r)$ is called an excursion above the past minimum of $f$,  or simply excursion of $f$ (see \cite[Section IV.2]{Ber01}) if 
\begin{eq}\label{defn:excursion}
f(t) - \ubar{f}(t)>0, \quad \forall t\in (l,r), \text{ where } l\in \mathrm{cl}(\mathscr{Z}_f)\text{ and } r\in  \mathrm{cl}(\mathscr{Z}_f) \cup \{\infty\}. 
\end{eq}
For $f\in \mathbb{D}_+[0,T]$, we consider $(l,r) \subset [0,T]$, and define an excursion similarly as in~\eqref{defn:excursion}.

 We often use boldface notation $\mathbf{X}$ for the stochastic process $( X(s) )_{s \geq 0}$, unless stated otherwise. 
Consider a decreasing sequence $ \boldsymbol{\theta}=(\theta_1,\theta_2,\dots)\in \ell^2_{\shortarrow}\setminus \ell^1_{\shortarrow}$. Denote by  $\mathcal{I}_i(s):=\ind{\xi_i\leq s }$ where $\xi_i\sim \mathrm{Exp}(\theta_i/\mu)$ independently, and $\mathrm{Exp}(r)$ denotes the exponential distribution with rate $r$.  Consider the process \begin{equation}\label{defn::limiting::process}
\iS(t) =  \frac{\lambda \mu}{\|\bld{\theta}\|_2^2} \sum_{i=1}^{\infty} \theta_i\mathcal{I}_i(t)-  t,
\end{equation}
for some $\lambda, \mu >0$.
Note that, for all $t>0$,  $\E[\iS(t)]<\infty$  since $\sum_{i}\theta_i^2<\infty$, and consequently $\iS(t)<\infty$, almost surely. 
Also, for any $u<t$, 
\begin{eq}
\E\big[|\iS(t) - \iS(u)|\big] \leq \frac{\lambda \mu}{\|\bld{\theta}\|_2^2} \sum_{i=1}^{\infty} \theta_i \e^{-\theta_i u}(1-\e^{-\theta_i (t-u)})+ |t-u| \leq (\lambda\mu+1)|t-u|,
\end{eq}
so that $\biS$ has bounded variation almost surely.
However, since $\sum_i\theta_i = \infty$, the process experiences infinitely many jumps in any bounded interval of time.
Define the reflected version of $\iS(t)$ by
\begin{equation} \label{defn::reflected-Levy}
 \refl{ \iS(t)}= \iS(t) - \min_{0 \leq u \leq t} \iS(u).
\end{equation}
We will show that, for any $\lambda>0$, the excursion lengths of the process $\biS = (\iS(t))_{t\geq 0}$ can be ordered almost surely as an element of $\ell^2_{\shortarrow}$. We denote this ordered vector of excursion lengths by $(\gamma_i(\lambda))_{i\geq 1}$.
For $v,t> 0$, define 
$M_t(v) := \sum_{j: v\theta_j\leq 1,\  t\theta_j\leq 1 } \theta_j^3.$
We will assume that for any $t>0$,
\begin{eq}\label{density-assumption}
\int_{0}^\infty  \e^{- tv^2M_t(v)} \dif v<\infty. 
\end{eq}
The technical condition in \eqref{density-assumption} on top of $\bld{\theta}\in \ell^2_{\shortarrow}\setminus\ell^1_{\shortarrow} $ will be used to ensure that the distribution of $\iS(t)$ is non-atomic for all $t>0$ (see Lemma~\ref{lem:exc} below), which in turn implies that we have strict ordering between excursion lengths, i.e., $\gamma_{i+1}(\lambda) <\gamma_i(\lambda)$ for all $i\geq 1$ almost surely. 
The condition \eqref{density-assumption} is relatively weak, and is, for example, satisfied for $\theta_j = j^{-\alpha}$ for $\alpha \in (1/2,1)$. To see this, note that $v^2M_t(v) $ is of the same order as $v^{-1+1/\alpha}$. 
However, this also shows that \eqref{density-assumption} is not satisfied for the extreme case $\alpha = 1$, i.e.,  $\theta_j = j^{-1}$.

 Also, define the counting process $\mathbf{N}^\lambda = (N^\lambda(t))_{t\geq 0}$ to be the Poisson process that has intensity $(\lambda\mu^2)^{-1}\|\bld{\theta}\|_2^2 \times\refl{ \iS(t)}$ at time $t$, conditionally on $( \iS(u) )_{u \leq t}$. Formally, $\mathbf{N}^\lambda$ is characterized as the counting process for which 
\begin{equation} \label{defn::counting-process}
N^\lambda(t) - \frac{\|\bld{\theta}\|_2^2}{\lambda\mu^2} \int\limits_{0}^{t}\refl{ \iS(u)}\dif u
\end{equation} is a martingale.  We use  the notation $N_i(\lambda)$ to denote the number of marks of $\mathbf{N}^\lambda$ in the $i$-th largest excursion of $\biS$.
Define 
\begin{eq}\label{eq:Z-limit}
\mathbf{Z}(\lambda):= ((\gamma_i(\lambda),N_i(\lambda)))_{i\geq 1}, \text{ ordered as an 
 element of }\Unot.
\end{eq}

\subsubsection{Results for the critical window}
Fix $\tau \in (2,3)$. 
Throughout this paper, we denote 
\begin{equation}\label{eqn:notation-const}
 \alpha= 1/(\tau-1),\qquad \rho=(\tau-2)/(\tau-1),\qquad \eta=(3-\tau)/(\tau-1). 
\end{equation}
Also, let $D_n$ be the degree of a vertex chosen uniformly at random from $[n]$.
We start by stating our assumptions on the degree sequences:
\begin{assumption}
\label{assumption1}
\normalfont  For each $n\geq 1$, let $\bld{d}=\boldsymbol{d}_n=(d_1,\dots,d_n)$ be a degree sequence satisfying $d_1\geq d_2\geq\ldots\geq d_n$. 
We assume the following about $(\boldsymbol{d}_n)_{n\geq 1}$ as $n\to\infty$:
\begin{enumerate}[(i)] 
\item \label{assumption1-1} (\emph{High-degree vertices}) For any  $i\geq 1$, 
$ n^{-\alpha}d_i\to \theta_i,$
where $\boldsymbol{\theta}:=(\theta_i)_{i\geq 1}\in \ell^2_{\shortarrow}\setminus \ell^1_{\shortarrow}$ is such that \eqref{density-assumption} holds. 
\item \label{assumption1-2} (\emph{Moment assumptions}) 
$(D_n)_{n\geq 1}$ is uniformly integrable,  $\lim_{n\to\infty}\frac{1}{n}\sum_{i\in [n]}d_i= \mu$ for some $\mu>0$, and 
 \begin{eq}\label{eq:unif-int-2nd-moment}
 \lim_{K\to\infty}\limsup_{n\to\infty}n^{-2\alpha} \sum_{i=K+1}^{n} d_i^2=0.
 \end{eq}
\end{enumerate}
\end{assumption}
\noindent 
In Section~\ref{sec:discussion}, we discuss the generality of Assumption~\ref{assumption1} and show that  power-law degrees satisfy these assumptions. 
For $\CM$, the \emph{criticality parameter} $\nu_n$ is defined as
\begin{equation}
\nu_n = \frac{\sum_{i\in [n]}d_i(d_i-1)}{\sum_{i\in [n]}d_i}.
\end{equation} 
Molloy and Reed~\cite{MR95}, and Janson and Luczak~\cite{JL09} showed that, under some regularity conditions, $\CM$ has a unique giant component (a component of size $\Theta(n)$) with high probability precisely when $\nu_n \to \nu>1$. 
Under Assumption~\ref{assumption1}, $\nu_n\to\infty$, as $n\to\infty$ since $\sum_{i\in [n]} d_i^2 \geq d_1^2= \Theta(n^{2\alpha})\gg n$, and $\CM$ always contains a giant component (see the remark below \cite[Theorem 4.5]{Hof17} and consider $\pi =1$).

We study \emph{percolation}, which refers to deleting each edge of a graph independently with probability $1-p$.  
In case of percolation on random graphs, the deletion of edges is  also independent from the underlying graph.
The percolation probability is allowed to depend on the network size, i.e., $p = p_n$.
 Let $\rCM_n(\bld{d},p_n)$ denote the graph obtained from percolation with probability $p_n$ on the graphs $\mathrm{CM}_n(\boldsymbol{d})$. 
Fountoulakis \cite{F07} showed that $\rCM_n(\bld{d},p_n)$ is distributed as $\rCM_n(\bld{d}^p)$, where $\bld{d}^p$ is the degree sequence of the percolated graph. 
Note that the degrees in $\bld{d}^p$ could be correlated, so later Janson~\cite{J09} gave an explicit construction which is simpler to analyze.  
This construction was used to identify the percolation phase transition in \cite{J09} and to study the critical window in \cite{DHLS15,DHLS16}. 
An interested reader is also referred to \cite[Algorithm 4]{DHLS15} where a construction of the whole percolation process $(\rCM_n(\bld{d},p))_{p\in [0,1]}$ is provided.

Now, under Assumption~\ref{assumption1}, if $\liminf_{n\to\infty}p_n>0$, then $\rCM_n(\bld{d},p_n)$ retains a giant component with high probability, i.e.,~$\rCM_n(\bld{d},p_n)$ is always supercritical;  see the remark below \cite[Theorem 4.5]{Hof17}.
Thus, in order to see the critical behavior, one must take $p_n \to 0$, as $n\to\infty$. 
For $p_n\to 0$, the graph always contains $n-\oP(n)$ degree-zero or isolated vertices, which makes Janson's construction inconvenient  to work with. 

For a sequence of finite graphs, the critical behavior is where we see intermediate behavior in the sense that it inherits some features from the subcritical (such as the absence of the giant component) and the supercritical regimes (the largest component is not a tree). 
 The collection of such values of $p$ is called the critical window. 
However, due to our lack of knowledge about the subcritical phase and the structural propeties therein, it is not a priori evident here how to define the critical window.
One way to define the subcritical regime and the critical window would be to say that inside the critical window, the rescaled vector of ordered component sizes converge to some \emph{non-degenerate} random vector, whereas the component sizes concentrate in the subcritical regime. 
This property has been observed quite universally for the percolation critical window.
In this paper, we take this as our definition of the critical window. 
It is worthwhile to mention that there is a substantial literature on how to define the critical value. 
See \cite{NP08,JW18,BCHSS05,HvdH17,Hof17} for different definitions of the critical probability and related discussions.

We will show that the critical window for percolation on $\CM$ is given  by 
\begin{equation}\label{eq:crit-window-CM}
p_c=p_c(\lambda):= \frac{\lambda}{\nu_n}(1+o(1)), \quad \lambda \in (0,\infty).
\end{equation}  
Notice that, under Assumption~\ref{assumption1}, $ p_c \sim n^{-2\alpha+1} \sim  n^{-\eta}$, where $\eta = (3-\tau)/(\tau-1)>0$.
The case where $p\ll p_c$ will be called the barely subcritical regime and the case $p_c\ll p \ll 1$ will be called the barely supercritical regime. 
We will show that a unique giant component emerges in the barely supercritical regime.
We first state the results about the component sizes and the complexity in the critical window, and then discuss the barely sub-/supercritical regimes.

We will always write $\sC_{\sss (i)}(p)$ to denote the $i$-th largest component in the percolated graph. The random graph on which percolation acts will always be clear from the context. 
A vertex is called isolated if it has degree zero in the graph $\CMP$. 
We define the component size corresponding to an isolated vertex to be zero (see Remark~\ref{rem:isolated} below). 
For any component $\mathscr{C}\subset\CMP$, let $\SP(\mathscr{C})$ denote the number of surplus edges given by $\# \{\text{edges in }\mathscr{C} \}-|\mathscr{C}|+1$. 
Finally, let 
\begin{eq}\label{defn:Z}
\mathbf{Z}_n(\lambda) := \big(n^{-\rho}|\sCi(p_c(\lambda))|,\SP(\sCi(p_c(\lambda)))\big)_{i\geq 1}, \text{ ordered as an element of }\Unot.
\end{eq}
The following theorem gives the asymptotics for the critical component sizes and the surplus edges of  $\CMP$: 
\begin{theorem}[Critical component sizes and surplus edges]\label{thm:main} Under {\rm Assumption~\ref{assumption1}}, as $n\to\infty$,
\begin{equation}
\mathbf{Z}_n(\lambda) \dto \mathbf{Z}(\lambda)
\end{equation}with respect to the $\Unot$ topology, where $\mathbf{Z}(\lambda)$ is defined in \eqref{eq:Z-limit}.
\end{theorem}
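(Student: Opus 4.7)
The plan is to use the standard recipe for critical random graphs---a breadth-first exploration of a configuration model whose rescaled walk converges to $\biS$---and then transfer this convergence to the ordered excursion-length/surplus-edge vector. The main twist is that Janson's thinning algorithm (which underlies \cite{DHLS15,DHLS16}) breaks down here because percolation at $p_c(\lambda)\sim n^{-\eta}$ would create $n-o(n)$ vertices of degree $1$. We circumvent this by sandwiching: we construct two configuration models $\CM_n(\bld{d}^{+})$ and $\CM_n(\bld{d}^{-})$ with (random) degree sequences $\bld{d}^{\pm}$ satisfying Assumption~\ref{assumption1} with the same limits $(\theta_i)_{i\geq 1}$ and $\mu$, and couplings such that, under a single probability space, the large components of $\CMP$ are squeezed between those of $\CM_n(\bld{d}^{-})$ and $\CM_n(\bld{d}^{+})$. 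Once this sandwich is in place it suffices to prove $\mathbf{Z}_n(\lambda)\dto \mathbf{Z}(\lambda)$ for a plain configuration model whose degrees have been rescaled to absorb $p_c(\lambda)$.

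Next, we perform a size-biased breadth-first exploration of $\CM_n(\bld{d})$. Writing $v(j)$ for the $j$-th vertex reached and $X_j$ for the number of half-edges of $v(j)$ paired to already-active half-edges (the back-edges), set $S_n(k)=\sum_{j=1}^{k}(d_{v(j)}-1-2X_j)$. The excursions of $S_n$ above its running minimum are in bijection with the explored components, and $\sum_{j\in\text{exc}} X_j$ counts the surplus. With the rescaling $t\mapsto S_n(\lfloor t\,n^{1-\alpha}\rfloor)/n^{\alpha}$ (times an explicit $\lambda$-dependent factor coming from the sandwich), I would show convergence in $\mathbb{D}_+[0,T]$ to $\biS$: each hub $i\leq K$ is selected at time asymptotically $\mathrm{Exp}(\theta_i/\mu)$ and contributes a jump of size $\lambda\mu\theta_i/\|\bld{\theta}\|_2^2$, yielding the series representation~\eqref{defn::limiting::process}; the tail $i>K$ is controlled in $\mathbb{D}[0,T]$ by a second-moment computation relying directly on~\eqref{eq:unif-int-2nd-moment}; and the linear drift $-t$ comes from the $-1$ in $d_{v(j)}-1$. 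Joint with $\mathbf{S}_n$, the counting process of back-edges has compensator that, after rescaling, converges to $(\lambda\mu^2)^{-1}\|\bld{\theta}\|_2^2\int_0^t\refl{\iS(u)}\dif u$, which, via a standard martingale argument (e.g.\ \cite{A97,BHL10}), gives the joint convergence $(\mathbf{S}_n,\mathbf{N}_n)\dto (\biS,\mathbf{N}^\lambda)$ in $\mathbb{D}_+[0,T]\times\mathbb{D}[0,T]$.

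The final step promotes functional convergence to convergence in $\Unot$. Using the properties of $\biS$ developed in Section~\ref{sec:properties-exploration}---in particular that the distribution of $\iS(t)$ is non-atomic for all $t>0$ (Lemma~\ref{lem:exc} below, which is the role of~\eqref{density-assumption}), so that excursion lengths are almost surely strictly ordered and elements of $\ell^2_{\shortarrow}$---a continuous-mapping argument gives joint convergence of the first $K$ excursion lengths and surplus counts for every fixed $K$. To upgrade to the $\ell^2$-metric on $\Unot$ we show
\begin{equation*}
\lim_{K\to\infty}\limsup_{n\to\infty}\PR\Big(\sum_{i>K}n^{-2\rho}|\sCi(p_c(\lambda))|^{2}>\varepsilon\Big)=0,
\end{equation*}
by coupling components of index $>K$ to a configuration-model exploration whose maximal degrees have been truncated, and invoking~\eqref{eq:unif-int-2nd-moment} to bound the resulting second moment. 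A parallel (and easier) bound controls $\sum_{i>K}\gamma_i n^{-\rho}|\sCi|\,\SP(\sCi)$, giving convergence in the metric~\eqref{defn_U_metric}. Combining the sandwich, the functional convergence, and these tail estimates yields the theorem.

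The hardest ingredient is this last tail estimate for small excursions. The limit $\biS$ is a bounded-variation process, but it has infinitely many jumps in every compact interval and so cannot be handled by the multiplicative-coalescent machinery of \cite{AL98}; one therefore cannot read off the $\ell^2$-tail of excursions from the limit and must instead argue at the discrete level. The argument rests on the fact that once the $K$ largest hubs have been discovered, the remaining unexplored degree sequence satisfies a subcritical Molloy--Reed-type criticality parameter (with room provided by~\eqref{eq:unif-int-2nd-moment}), so that the sum of squares of the remaining component sizes concentrates and vanishes as $K\to\infty$. Making this quantitative---especially uniformly in the location of these "remaining" excursions along the walk---is where the bulk of the technical work lies, and it is exactly what Section~\ref{sec:properties-exploration} prepares on the continuum side and what must be matched on the discrete side in Section~\ref{sec:proofs}.
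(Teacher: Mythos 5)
Your overall architecture is the same as the paper's: sandwich the percolated graph between two configuration models on (random) percolated degree sequences via Fountoulakis's construction, prove functional convergence of a breadth-first exploration walk to $\biS$ by separating the top $K$ hubs from an $\ell^2$-controlled tail, establish the $\ell^2$-tail estimate for small components by deleting the $K$ largest-degree vertices and exploiting a subcritical criticality parameter together with path counting, and pass from walk convergence to convergence of ordered excursion lengths using the "good function" properties of Section~\ref{sec:properties-exploration}. (One small slip: with the percolated degrees the hubs have $\tilde d_i=\Theta(n^{\rho})$, so the walk must be rescaled in space by $n^{-\rho}$, not $n^{-\alpha}$; the time scale $n^{1-\alpha}=n^{\rho}$ you chose is right.)

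There is, however, one genuine gap: your treatment of the surplus edges. You propose to deduce joint convergence $(\mathbf{S}_n,\mathbf{N}_n)\dto(\biS,\mathbf{N}^\lambda)$ from convergence of the compensator of the back-edge counting process "via a standard martingale argument (e.g.\ \cite{A97,BHL10})". That route works for the Erd\H{o}s--R\'enyi and rank-one models because there the surplus edges can be superimposed independently after the entire exploration walk has been revealed, so compensator convergence plus a Poisson-convergence theorem such as \cite{Bro81} suffices. For the configuration model this independence fails: a surplus edge occurs exactly when the newly paired half-edge hits an already-active half-edge, i.e.\ precisely at the steps where the walk increments by $-2$, so the surplus process is a deterministic functional of the walk's jump pattern rather than conditionally independent noise. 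The paper's proof (Lemma~\ref{lem:surp:poisson-conv} and Appendix~\ref{appendix-surplus}) has to work around this by introducing the modified walk $\mathbf{S}_n'$ obtained by deleting the $-2$ surplus steps, showing $\mathrm{d}_{J_1,T}(\bar{\mathbf{S}}_n',\bar{\mathbf{S}}_n)\pto 0$, and then computing the exact conditional law of the number and locations of surplus edges given $\mathbf{S}_n'$, which is shown to converge to a mixed Poisson law with intensity proportional to $\refl{\iS(\cdot)}$. Without this (or an equivalent decoupling), the joint convergence of component sizes and surplus counts in $\Unot$ is not established. A second, smaller omission: to apply the excursion-continuity machinery (Proposition~\ref{prop:conv-exc-length-area}) you also need that no macroscopic excursion of $\bar{\mathbf{S}}_n$ begins after time $T$, uniformly as $T\to\infty$ (Proposition~\ref{prop:large-comp-expl-early}); you allude to uniformity "in the location of the remaining excursions" but this step should be stated and proved explicitly, since it is a hypothesis of the continuity lemma rather than a consequence of the $\ell^2$-tail bound alone.
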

\begin{remark}[Ignoring isolated components]\label{rem:isolated} \normalfont
Note that $2\rho<1$ for $\tau\in (2,3)$. 
When percolation is performed with probability $p_c$, there are of the order $n$ isolated vertices and thus $n^{-2\rho}$ times the number of isolated vertices tends to infinity. 
This is the reason why we must ignore the contributions due to isolated vertices, when considering the convergence of the component sizes in the $\ell^2_{\shortarrow}$-topology.
Note that an isolated vertex with self-loops does not create an isolated component. 
\end{remark}


For a connected graph $G$, $\diam(G)$ denotes the diameter of the graph, i.e., the maximum graph distance between any pair of vertices. 
For an arbitrary graph~$G$, $\diam(G):=\max \diam(\mathscr{C})$, where the maximum is taken over all connected components.
Our next result shows that the diameter of the largest connected components is of order $\log n$:
\begin{theorem}[Diameter of largest critical  clusters]\label{thm:diameter-large-comp} Under {\rm Assumption~\ref{assumption1}}, $\diam (\mathrm{CM}_n(\bld{d},p_c(\lambda))) = \OP(\log n)$.
\end{theorem}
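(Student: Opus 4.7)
The plan is a \emph{core--periphery decomposition}: a finite collection of ``hubs'' with percolated degrees of order $n^\rho$, together with a ``periphery'' that becomes a strictly subcritical percolated configuration model once the hubs are removed. This reduces the problem to bounding the diameter of a subcritical configuration model (which is $O(\log n)$ by standard path-counting), plus a bounded number of ``hub excursions'' that each add an $O(\log n)$ contribution.

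Fix a large constant $K$, let $\mathcal{H}_K:=\{1,\dots,K\}$, and let $G_K$ denote the subgraph of $\mathrm{CM}_n(\bld{d},p_c(\lambda))$ induced on $[n]\setminus\mathcal{H}_K$. One shows that $G_K$ is, up to lower-order corrections, a percolated configuration model on the residual degree sequence $(d_i)_{i>K}$ whose criticality parameter satisfies
$$
p_c(\lambda)\,\frac{\sum_{i>K} d_i(d_i-1)}{\sum_{i\in [n]} d_i} \longrightarrow \lambda\,\frac{\sum_{i>K}\theta_i^2}{\|\bld{\theta}\|_2^2} \ \xrightarrow{K\to\infty} \ 0,
$$
by Assumption~\ref{assumption1}(ii) combined with~\eqref{eq:unif-int-2nd-moment}. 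For $K$ sufficiently large, $G_K$ is therefore strictly subcritical, and a classical first-moment argument bounds the expected number of self-avoiding paths of length $k$ in it by $C\ell_n\nu^{k-1}$ with $\nu<1$, which is $o(1)$ once $k\geq c\log n$. Hence $\diam(G_K)=\OP(\log n)$.

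To transfer this bound to the full graph, analyse a BFS from an arbitrary vertex $v$ by alternating between periphery explorations and hub excursions. Each periphery phase adds at most $\diam(G_K)+1$ to the BFS depth before either exhausting the current periphery component or reaching some hub; each hub excursion then adds one step to reach the hub and one more to expose its $\Theta(n^\rho)$ percolated neighbours, which seed new periphery components. Since self-avoiding BFS visits each of the $K$ hubs at most once, at most $K+1$ such alternations take place, yielding a BFS depth from $v$ of at most $(K+1)(\diam(G_K)+2) = \OP(\log n)$. In particular, $\diam(\mathrm{CM}_n(\bld{d},p_c(\lambda)))=\OP(\log n)$.

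\textbf{Main obstacle.} The principal technicality is that $G_K$ is not literally a configuration model on $(d_i)_{i>K}$: hub half-edges absorb a random number of non-hub half-edges, so the residual matching on $[n]\setminus\mathcal{H}_K$ has a modified (random) degree sequence. One handles this by first exposing the pairings incident to hub half-edges (using Janson's percolation construction or a direct coupling) and then applying the subcritical diameter bound to the residual configuration model on the remaining half-edges; a stochastic-domination argument then transfers the $\OP(\log n)$ bound to $G_K$ itself. Uniformity of the estimates as $K\to\infty$ is ensured by the uniform integrability~\eqref{eq:unif-int-2nd-moment}, which lets $K$ be chosen (depending on $\lambda$ but not on $n$) so that the periphery criticality parameter is $<1/2$ whp.
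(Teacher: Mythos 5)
Your proposal is correct and follows essentially the same route as the paper: remove a fixed finite set of hubs so that the residual percolated configuration model is strictly subcritical (the paper's $\mathcal{G}_n^{\sss R}$ via \eqref{eqn:nu-K}), bound its diameter by $O(\log n)$ through first-moment path counting, and observe that reinserting the hubs inflates the diameter by at most a bounded factor. The only cosmetic differences are that the paper treats $\lambda<1$ directly without hub removal and routes the transfer to $\mathrm{CM}_n(\bld{d},p_c(\lambda))$ through the sandwiching of Proposition~\ref{prop:coupling-lemma} rather than Janson's construction.
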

Thus, the maximum diameter scales logarithmically in the $\tau \in (2,3)$, in contrast to the other universality classes in the $\tau\in (3,4)$ and $\tau>4$ regimes, where graph distances scale as a positive power of $n$ \cite{ABG09,BHS15}.

\subsubsection{Behavior in the near-critical regimes}
We now discuss asymptotic results for the component sizes in the barely subcritical ($p_n\ll p_c(\lambda)$) and barely supercritical ($p_n\gg p_c(\lambda)$) regimes.
The next two theorems summarize the behavior outside the critical window:
\begin{theorem}[Barely subcritical regime]\label{thm:barely-subcrit} For $\rCM_n(\bld{d},p_n)$, suppose that $n^{-\alpha}\ll p_n\ll p_c(\lambda)$ and that {\rm Assumption~\ref{assumption1}} holds. Then, as $n\to\infty$,
\begin{equation}\label{subcrit-asmp-comp-sp}
\big((n^\alpha p_n)^{-1}|\sCi(p_n)|\big)_{i\geq 1} \pto (\theta_i)_{i\geq 1},
\end{equation} in $\ell^{2}_{\shortarrow}$ topology, and  $\PR(\surp{\sCi(p_n)} =0) \to 1$, for all $i\geq 1$.
\end{theorem}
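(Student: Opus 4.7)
The proof plan rests on a ``hub--star'' picture of $\rCM_n(\bld{d},p_n)$ in the barely subcritical regime. Under the assumption $p_n\gg n^{-\alpha}$, each top hub retains $\thetaP(n^\alpha p_n)\to\infty$ half-edges, while $p_n\ll p_c(\lambda)\sim 1/\nu_n$ forces the percolated criticality parameter $\tilde\nu_n^{p_n}\sim p_n\nu_n$ to vanish. Consequently, the component containing the $i$-th largest hub should be essentially a star with $d_i^{p_n}$ leaves, giving $|\sCi(p_n)|\sim \theta_i n^\alpha p_n$.

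I would proceed in three steps. First, working with the percolated graph via Fountoulakis's representation $\rCM_n(\bld{d},p_n)\stackrel{d}{=}\rCM_n(\bld{d}^{p_n})$ (or the sandwiching construction mentioned in the Methods discussion), I establish concentration of the top retained degrees via Chernoff on $d_i^{p_n}\sim \mathrm{Bin}(d_i,p_n)$: for each fixed $i$, $(n^\alpha p_n)^{-1}d_i^{p_n}\pto \theta_i$ since $\theta_i n^\alpha p_n\to\infty$; likewise $\ell_n^{p_n}/(\mu np_n)\pto 1$ and $\tilde\nu_n^{p_n}/(p_n\nu_n)\pto 1$. Second, I perform a BFS exploration of the component of the $i$-th largest hub using the configuration-model pairing dynamics on $\bld{d}^{p_n}$: each of its $d_i^{p_n}$ half-edges pairs with a uniform free half-edge on a size-biased vertex $v$ whose expected residual-degree contribution is at most $\tilde\nu_n^{p_n}$. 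Summing, the expected overshoot beyond $d_i^{p_n}+1$ is $O(d_i^{p_n}\tilde\nu_n^{p_n})=\oP(n^\alpha p_n)$, since $\tilde\nu_n^{p_n}=O(p_n/p_c)\to 0$. Third, a union bound shows that the top $K$ hubs lie in pairwise distinct components whp (collision probability $O(K^2 p_n/p_c)$), yielding the marginal convergence $(n^\alpha p_n)^{-1}|\sCi(p_n)|\pto \theta_i$ for each fixed $i$.

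The main obstacle is $\ell^2_{\shortarrow}$-tightness, which I would handle by removing the top $K$ hubs and analysing the residual configuration model: its maximum degree $d_{K+1}\sim\theta_{K+1}n^\alpha$ satisfies $\theta_{K+1}\to 0$ as $K\to\infty$, and $\sum_{i>K}d_i^2\leq\varepsilon_K n^{2\alpha}$ with $\varepsilon_K\to 0$ by \eqref{eq:unif-int-2nd-moment}, so the residual criticality parameter is $\leq \varepsilon_K p_n/p_c\to 0$ and the residual is deeply subcritical. Iterating the hub-star analysis identifies the $(K+j)$-th largest component as the star around the $(K+j)$-th residual hub with rescaled size close to $\theta_{K+j}$, and the aggregate tail of truly small (non-hub) components is controlled by a susceptibility-type bound using $\sum_{\mathscr{C}}|\mathscr{C}|^2=\sum_v|\mathscr{C}(v)|$ combined with the deep subcriticality of the residual graph. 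For the surplus statement, on each hub component of size $s\sim\theta_i n^\alpha p_n$ the conditional expected number of surplus edges is $O(s^2/\ell_n^{p_n})=O(\theta_i^2 p_n/p_c)\to 0$, giving $\PR(\surp{\sCi(p_n)}=0)\to 1$ for each fixed $i\geq 1$ by a union bound.
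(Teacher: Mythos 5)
Your first three steps and the surplus bound are essentially the paper's argument: the paper also works on the sandwiched graph $\cG_n(p_n)$, runs the BFS exploration from hub $j$, shows via Lemma~\ref{lem:perc-degrees} and the size-biased bound \eqref{prob-ind-lb} that the non-hub contribution to the exploration process is $\oP(n^\alpha p_n)$ (your ``overshoot'' $O(d_i^{p_n}\tilde\nu_n^{p_n})$), and deduces both $(n^\alpha p_n)^{-1}|\sC(j,p_n)|\pto\theta_j$ and the vanishing of the surplus from the bound $O(s^2/\tell_n)$. One small remark on your step 3: a ``collision probability'' for direct hub--hub edges is not by itself enough, since two hubs could be joined through intermediate low-degree vertices; but this is already covered by your step 2, exactly as in the paper (if $i\in\sC(j)$ then $\sC(j)$ would contain at least $(\theta_i+\theta_j)n^\alpha p_n(1+\oP(1))$ edges, contradicting the limit).

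The genuine gap is in your $\ell^2_{\shortarrow}$-tightness step. The identity $\sum_{\sC}|\sC|^2=\sum_v|\sC(v)|$ combined with an $O(1)$ (or even $1+o(1)$) susceptibility of the residual graph can only give $\sum_{i>K}|\sCi|^2=\OP(np_n)$, because the percolated graph contains $\Theta(np_n)$ non-isolated vertices, almost all of $\tilde d$-degree one, forming $\Theta(np_n)$ single-edge components; each non-isolated vertex contributes at least $2$ to $\sum_v|\sC(v)|$. Since $np_n/(n^\alpha p_n)^2 = n^{1-2\alpha}/p_n = p_c/p_n\to\infty$ in the barely subcritical regime, the bound $\OP(np_n)$ is off by a diverging factor and cannot establish $\lim_K\limsup_n\PR\big(\sum_{i>K}|\sCi|^2>\varepsilon n^{2\alpha}p_n^2\big)=0$. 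The device the paper uses (Proposition~\ref{prop:l2-tight} and Lemma~\ref{lem::tail_sum_squares}, invoked verbatim in Section~\ref{sec:subcrit}) is the degree-weighted \emph{excess} susceptibility $\sum_i D_i^{\sss K}\sum_{k\in\sC_{(i)}^{\sss K}}(\td_k-1)$: the factor $(\td_k-1)$ annihilates the degree-one dust, so the starting term of the size-biased recursion is $\E[\td_{V_n^{*,K}}-1]=O(\varepsilon_K\,p_n\nu_n)=O(\varepsilon_K\,p_n/p_c)$ rather than $O(1)$, and the expectation of the weighted sum becomes $\tell_n\cdot O(\varepsilon_K p_n/p_c)=O(\varepsilon_K n^{2\alpha}p_n^2)$, which is of the right order. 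You need this excess-degree weighting (or some other mechanism that explicitly discards the $\Theta(np_n)$ tiny tree components) for the tightness step to close; as written, your bound provably cannot reach the required scale.
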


\begin{remark}[Components and hubs] \normalfont 
In the barely subcritical regime, we show that the $i$-th largest component is essentially the component containing the $i$-th largest degree vertex, or the $i$-th hub. 
Since the hubs have degree $\Theta(n^{\alpha})$, we need the assumption that $p_n\gg n^{-\alpha}$ in Theorem~\ref{thm:barely-subcrit}, as otherwise the hubs become isolated, in which case components are likely to be extremely small. 
\end{remark}
For the result in the barely supercritical regime, let $p_c(\lambda) \ll p_n\ll 1$. 
The exact asymptotics of the high-degree vertices and the tail behavior in \eqref{eq:unif-int-2nd-moment} will not be required. 
Below, we state the sufficient conditions for the concentration of the size of the giant component. 
 In Section~\ref{sec:discussion}, we will see that these conditions are satisfied when the degrees are sampled from a power-law distribution: 
\begin{assumption}\label{assumption-supercrit} \normalfont
For each $n\geq 1$, let $\bld{d}=\boldsymbol{d}_n=(d_1,\dots,d_n)$ be a degree sequence satisfying $d_1\geq d_2\geq\ldots\geq d_n$. 
We assume the following about $(\boldsymbol{d}_n)_{n\geq 1}$:
\begin{enumerate}[(i)]
    \item $d_1 = O(n^\alpha)$.   
    \item $(D_n)_{n\geq 1}$ is uniformly integrable, and   $\lim_{n\to\infty}\frac{1}{n}\sum_{i\in [n]}d_i= \mu$ for some $\mu>0$. 
\item Let $D_n^\star$ denote the degree of a vertex chosen in a size-biased manner with the sizes being $(d_i/\ell_n)_{i\in [n]}$.
Then, there exists a constant $\kappa>0$ such that 
\begin{equation}\label{eq:asymp-laplace}
1 - \E[\e^{-tp_n^{1/(3-\tau)}D_n^{\star}}] = \kappa p_n^{(\tau-2)/(3-\tau)}(t^{\tau-2}+o(1)).
\end{equation}
\end{enumerate}
\end{assumption}
Let $\rE(G)$ denote the number of edges in the graph $G$.
\begin{theorem}[Barely supercritical regime]\label{thm:barely-supercrit} 
For $\rCM_n(\bld{d},p_n)$, suppose that $p_c(\lambda) \ll p_n\ll 1$ and that {\rm Assumption~\ref{assumption-supercrit}} hold.
 Then, as $n\to\infty$,
\begin{equation}
\frac{|\sC_{\sss (1)}(p_n)|}{np_n^{1/(3-\tau)}} \pto \mu\kappa^{1/(3-\tau)}, \quad \frac{\rE(\sC_{\sss (1)}(p_n))}{np_n^{1/(3-\tau)}} \pto \mu\kappa^{1/(3-\tau)},
\end{equation}and for all $i\geq 2$, $|\sCi(p_n)| = \oP(np_n^{1/(3-\tau)})$, $\rE(\sCi(p_n)) = \oP(np_n^{1/(3-\tau)})$.
\end{theorem}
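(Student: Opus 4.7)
The plan is to reduce the problem to the survival probability $\zeta_n$ of the exploration branching process on the percolated configuration model, and then extract its precise asymptotic from Assumption~\ref{assumption-supercrit}(iii). By Fountoulakis \cite{F07}, $\rCM_n(\bld{d}, p_n)$ has the same distribution as the configuration model with the percolated degree sequence $\bld{d}^p$, where $d_i^p \sim \mathrm{Bin}(d_i, p_n)$ are independent. A cluster exploration from a uniform vertex is well-approximated by a branching process with size-biased offspring, so the half-edge survival probability $\zeta_n$ satisfies
\begin{equation*}
\zeta_n = 1 - \E\bigl[(1 - p_n\zeta_n)^{D_n^* - 1}\bigr],
\end{equation*}
while the vertex survival probability is $\zeta_n^v = 1 - \E[(1 - p_n\zeta_n)^{D_n}]$.

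To solve the fixed point asymptotically, I substitute the ansatz $\zeta_n = c\, p_n^{(\tau-2)/(3-\tau)}$, so that $p_n\zeta_n = c\, p_n^{1/(3-\tau)}$. Since $p_n\zeta_n \to 0$, the factor $(1-p_n\zeta_n)^{D_n^*-1}$ differs from $\mathrm{e}^{-p_n\zeta_n D_n^*}$ only by terms of smaller order. Applying \eqref{eq:asymp-laplace} with $t = c$ yields
\begin{equation*}
1 - \E\bigl[\mathrm{e}^{-p_n\zeta_n D_n^*}\bigr] = \kappa\, c^{\tau-2}\, p_n^{(\tau-2)/(3-\tau)}(1+o(1)),
\end{equation*}
and matching with $\zeta_n$ forces $c^{3-\tau} = \kappa$, i.e.\ $c = \kappa^{1/(3-\tau)}$. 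Rigor comes from monotonicity of the map $\zeta \mapsto 1 - \E[(1-p_n\zeta)^{D_n^*-1}]$: sandwich bounds on the Laplace transform coming from \eqref{eq:asymp-laplace} translate into matching upper and lower bounds on the fixed point.

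For the vertex size, the Laplace transform of $D_n$ itself satisfies $1 - \E[\mathrm{e}^{-sD_n}] = s\mu\,(1 + O(s^{\tau-2}))$ as $s \to 0$, obtained by integrating the identity $\frac{\dif}{\dif s}\E[\mathrm{e}^{-sD_n}] = -\mu\, \E[\mathrm{e}^{-sD_n^*}]$ and applying \eqref{eq:asymp-laplace}. This gives $\zeta_n^v = \mu p_n\zeta_n (1 + o(1))$, so that in expectation $|\sC_{\sss (1)}(p_n)|/n \sim \mu\,\kappa^{1/(3-\tau)}\,p_n^{1/(3-\tau)}$. For the edges, I use $2\rE(\sC_{\sss (1)}(p_n)) = \sum_{v \in \sC_{\sss (1)}(p_n)} d_v^p$ and, decomposing over $v$'s $d_v$ half-edges, $\E[d_v^p\, \1\{v \in \sC_{\sss (1)}(p_n)\}] = d_v p_n\,[1 - (1-\zeta_n)(1-p_n\zeta_n)^{d_v - 1}]$. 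Summing over $v$ and exploiting the size-biased identity $\sum_v d_v (1-p_n\zeta_n)^{d_v-1} = \ell_n\,(1-\zeta_n)$, which is precisely the fixed-point equation, gives the telescoping value $p_n\ell_n\,(2\zeta_n - \zeta_n^2) = 2n\mu p_n\zeta_n\,(1+o(1))$, whence $\E[\rE(\sC_{\sss (1)}(p_n))] = n\mu p_n\zeta_n(1+o(1)) = \mu\,\kappa^{1/(3-\tau)}\,np_n^{1/(3-\tau)}(1+o(1))$.

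Convergence in probability follows from a second-moment bound on $\sum_v \1\{v \in \sC_{\sss (1)}(p_n)\}$ and $\sum_v d_v^p \1\{v \in \sC_{\sss (1)}(p_n)\}$, using that the exploration branching processes from two generic vertices decorrelate, so that pairwise covariances of the indicators are negligible. Uniqueness of the giant is handled via sprinkling: writing $p_n = 1 - (1-p'_n)(1-p''_n)$ with $p''_n \ll p_n$ but $p'_n\nu_n \to \infty$, one first applies the size estimate to the $p'_n$-percolation to exhibit clusters of size $\Theta(np_n^{1/(3-\tau)})$, and then argues that the sprinkled $p''_n$-edges are numerous enough to merge any two such clusters into one with probability tending to one. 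The principal obstacle is that the standard Molloy--Reed and Janson--Luczak giant-component framework \cite{JL09} requires $\E[D_n^2]<\infty$, which fails here; Assumption~\ref{assumption-supercrit}(iii) substitutes for this missing moment by pinning down the scaling $1 - \E[\mathrm{e}^{-sD_n^*}] \sim \kappa s^{\tau-2}$, and the main technical effort is to propagate this asymptotic through the Taylor expansion of the fixed-point equation uniformly over degrees ranging from $O(1)$ up to $O(n^\alpha)$, where hubs with $d_v \gg (p_n\zeta_n)^{-1}$ (which dominate the edge count but not $\zeta_n^v$) have to be handled separately from the bulk of the vertices.
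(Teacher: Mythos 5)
Your fixed-point analysis correctly identifies the constant: matching $\zeta_n = c\,p_n^{(\tau-2)/(3-\tau)}$ against \eqref{eq:asymp-laplace} forces $c=\kappa^{1/(3-\tau)}$, and your computations of $\zeta_n^v$ and of the edge count via the size-biased identity reproduce exactly the limits in the theorem. This is, in different clothing, the same information the paper extracts: the drift function $\psi(t)=\kappa t^{\tau-2}-t$ in \eqref{eq:choice-parameters-barely-supercrit} is your fixed-point equation, and its positive zero $\zeta=\kappa^{1/(3-\tau)}$ is your $c$. The paper, however, does not argue through survival probabilities at all; it sandwiches $\rCM_n(\bld{d},p_n)$ between two graphs $\cG_n(p_n(1\pm\varepsilon_n))$ (Proposition~\ref{prop:coupling-lemma}) and then verifies the hypotheses (B1)--(B8) of the general near-critical result \cite[Theorem 5.4]{HJL16} for the continuous-time exploration of Algorithm~\ref{algo:expl-janson}, via the Laplace-transform estimates of Lemma~\ref{lem:laplace-transform-estimate} and the martingale concentration of Proposition~\ref{prop:supcrit-expt-concen}. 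That black box is precisely what delivers the two steps you leave as assertions.

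Those two steps are where the genuine gaps lie. First, the passage from the branching-process survival probability to the size of the largest component is the hard part in this regime: $\zeta_n^v\to 0$ and the giant is sublinear, so one needs a quantitative coupling of the exploration to the ($n$-dependent) branching process valid up to an intermediate scale, control of degree depletion in the presence of hubs of degree $\Theta(n^\alpha)$, and a covariance bound for the events $\{|\sC(u)|\geq k_n\}$ -- none of which follows from the bare statement that explorations ``decorrelate,'' and the indicator you actually need, $\1\{v\in\sC_{\sss(1)}(p_n)\}$, is not accessible to a second-moment argument until after uniqueness is established. Second, and more seriously, your sprinkling step as stated would fail for $\tau\in(2,3)$: if two clusters are only known to have $\Theta(np_n^{1/(3-\tau)})$ vertices (hence at least that many half-edges), the probability that the $p_n''$-round connects them is governed by $h_1h_2/(\ell_np_n)\lesssim n\,p_n^{(5-\tau)/(3-\tau)}\asymp n^{(2\tau-6)/(\tau-1)}\lambda_n^{(5-\tau)/(3-\tau)}$, which tends to $0$ unless $\lambda_n$ grows polynomially. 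The merging must instead go through the \emph{total degree} of the large clusters, which is $\Theta(\ell_n\zeta_n)=\Theta(np_n^{(\tau-2)/(3-\tau)})\gg np_n^{1/(3-\tau)}$ precisely because the hubs are absorbed into them; only with that input does $h_1h_2/(\ell_np_n)\asymp(p_n''/p_n)^2\lambda_n^{(\tau-1)/(3-\tau)}\to\infty$ for a suitable $p_n''$. Identifying and proving that hubs carry the connectivity is an essential missing ingredient of your route, not a routine detail.
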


\begin{remark}[Relation to Abel-Tauberian theorem] \normalfont The infinite second-moment assumption is captured by \eqref{eq:asymp-laplace}.
The identity \eqref{eq:asymp-laplace} is basically a version of the celebrated Abel-Tauberian theorem \cite[Chapter XIII.5]{Fel91} (see also \cite[Chapter 1.7]{BGT89}). 
However, since both $D_n^{\star}$ and $p_n$ depend on $n$, the joint asymptotics needs to be stated as an assumption.
In Section~\ref{sec:discussion}, we discuss how this assumption is satisfied when (i) $d_i = (1-F)^{-1}(i/n)$ (ii) $d_i$ is the $i$-th order statistic of an i.i.d sample, where $F$ is a power-law  distribution with $\tau\in (2,3)$. 
\end{remark}

\subsection{Discussion}
\label{sec:discussion}

\paragraph*{Critical window: emergence of hub connectivity.}
The critical window is the regime in which  hubs start getting connected. 
Hubs are the high-degree vertices, whose asymptotic degree is determined by Assumption~\ref{assumption1}(i).
To understand the above remark more precisely, let us denote the probability that $i$ and  $j$ are in the same component in the $p$-percolated graph by $\pi(i,j,p)$. 
Then, for any fixed $i, j\geq 1$,
\begin{gather}
\limsup_{n\to\infty} \pi(i,j,p_n)  = 0 \quad \text{ for }p_n \ll p_c,\label{eq:remark-sub}\\
0< \liminf_{n\to\infty} \pi (i,j,p_n)\leq  \limsup_{n\to\infty} \pi (i,j,p_n) <1  \quad \text{ for }p_n = \Theta( p_c),\label{eq:remark-crit}\\
\limsup_{n\to\infty} \pi(i,j,p_n)  = 1 \quad \text{ for }p_n \gg p_c, \label{eq:remark-super}
\end{gather}
Indeed, any two vertices $i$ and $j$ share $p_n d_id_j/(\ell_n-1)$ edges in expectation.
This expectation is $o(1)$, $\Theta(1)$, or $\omega (1)$ depending on whether $p_n\ll p_c$, $p_n\sim p_c$, or $p_n\gg p_c$.
In the subcritical regime, this observation and a simple union bound yields \eqref{eq:remark-sub}. 
For the critical case, a method of moment computation shows that the number of edges between hubs $i$ and $j$ converges in distribution to Poisson$(\lambda \theta_i \theta_j/\mu)$. 
We don't prove this here, but instead refer the reader to \cite[Proposition 7.13]{RGCN1} where similar Poisson approximation computations have been done for the configuration model. 
This shows \eqref{eq:remark-crit}. 
In the super-critical regime, 
\begin{eq}
\PR((i,j) \text{ don't share any edge}) = \prod_{l=1}^{d_j} \bigg(1-\frac{p_nd_i}{\ell_n-2l+1}\bigg) \leq \e^{-p_n d_i d_j/2\ell_n} \to 0,
\end{eq}
 so that $1- \pi(i,j,p_n) \to 0$ which yields \eqref{eq:remark-super}. 
Intuitively, in the barely subcritical regime, all the hubs are in different components.
Hubs start getting connected to each other directly, forming the critical components as the $p$ varies over the critical window. 
Finally in the barely super-critical regime the giant component, which contains all the hubs, is formed. 
The features \eqref{eq:remark-sub}, \eqref{eq:remark-crit} and \eqref{eq:remark-super} are also observed in the $\tau \in (3,4)$ case~\cite{BHL12}. 
However, the key distinction between $\tau \in (3,4)$ and $\tau\in (2,3)$ is that for $\tau \in (3,4)$ the paths between the hubs have lengths that grow as $n^{(\tau-3)/(\tau-1)}$, whereas they are directly connected in the $\tau\in(2,3)$ regime.

\paragraph*{Intuitive explanation for the exploration process. }
Suppose that we explore the critically percolated configuration model sequentially in a breadth-first manner.
The reflected version of the stochastic process in \eqref{defn::limiting::process} turns out to be the limit of the process that counts the number of unpaired half-edges incident to the discovered vertices. 
This limiting process can be intuitively understood as follows. 
When we explore hubs, the exploration process increases drastically, causing the jumps in the first term in \eqref{defn::limiting::process}. 
The negative linear drift is an accumulation of two effects. 
(1) Because we explore two vertices at each time, we get a negative drift $-2t$. 
(2) The exploration of the low-degree vertices cumulatively causes a linear positive drift $+t$. 
The main contribution in the latter case comes due to the degree-one vertices in the system.
Thus in total, we get a drift of $-t$ in the exploration process \eqref{defn::limiting::process}.

\paragraph*{Assumption on the degrees.} 
Assumptions~\ref{assumption1}, \ref{assumption-supercrit} hold for two interesting special cases of power-law degrees that have received special attention in the literature: Case~(I)  $d_i = (1-F)^{-1}(i/n)$, Case~(II) $d_i$'s are the order statistics of an i.i.d sample from $F$.
Here $F$ is some distribution function supported on non-negative integers and $(1-F)(x) = c_{\sss F} k^{-(\tau-1)},$ for  $k \leq x< k+1$, and we recall that the inverse of a bounded non-increasing function $f:\R\mapsto \R $ is defined as  
\begin{eq}
f^{-1}(x) := \inf\{y:f(y)\leq x\}.
\end{eq}
We add a dummy half-edge to vertex 1 if necessary to make $\sum_{i\in [n]}d_i$ even. However, we ignore this contribution since this does not change any asymptotic calculation below. 
Recall that we use $C$ as a generic notation for a constant whose value can be different between expressions, and $a_n \sim b_n$ denotes  $\lim_{n\to\infty}a_n/b_n = 1$.

For Case (I), 
$d_i \sim (c_{\sss F}n/i)^\alpha $ for all $i = o(n)$ and $d_i \leq C(n/i)^\alpha$ for all $i\in [n]$.
Consequently, Assumption~\ref{assumption1}(i) is satisfied with $\theta_i = c_{\sss F}^\alpha i^{-\alpha}$. 
To see Assumption~\ref{assumption1}(ii), note that
\begin{eq}\label{D-exp-conv-case-1}
\frac{1}{n} \sum_{i\in [n]} d_i \sim \int_0^1 (1-F)^{-1}(x) \dif x = \E[D],
\end{eq}where $D$ has distribution function $F$, 
and 
\begin{eq}\label{eq:second-moment}
n^{-2\alpha} \sum_{i>K} d_i^2 \leq C \sum_{i>K} i^{-2\alpha} \sim CK^{1-2\alpha} \to 0 \quad \text{ as } K\to\infty.
\end{eq}
Also, $D_n\dto D$, and $\E[D_n]\to \E[D]$ implies that $(D_n)_{n\geq 1}$ is uniformly integrable. 
To see Assumption~\ref{assumption-supercrit}, 
with the above computations, we have already verified all the conditions in Assumption~\ref{assumption-supercrit}(i),(ii).
To verify Assumption~\ref{assumption-supercrit}(iii), we now show that, for $t_n = tp_n^{1/(3-\tau)}$ with fixed $t>0$, 
\begin{eq} \label{eq:tauberian-to-prove}
&1 - \E[\e^{-t_nD_n^{\star}}] = \frac{1}{\ell_n} \sum_{k\in [n]} d_k \big(1-\e^{-t_nd_k}\big) \sim t_n^{\tau - 2} \int_0^\infty c_{\sss F} z^{-\alpha} (1-\e^{-c_{\sss F}z^{-\alpha}}) \dif z,
\end{eq}
and thus \eqref{eq:asymp-laplace} holds as well. 
Let us split the last sum in three parts by restricting to the set $\{k: d_k <\varepsilon(t_n)^{-1}\}$, $\{k: d_k \in [\varepsilon(t_n)^{-1}, (\varepsilon t_n)^{-1}]\}$, and $\{k: d_k >(\varepsilon t_n)^{-1}\}$ and denote them by $(I)$, $(II)$ and $(III)$ respectively.
Using the fact that $1-\e^{-x} \leq x$, it follows that
\begin{eq} \label{eq:tauberian-I}
\frac{(I)}{t_n^{\tau-2}} &\leq \frac{t_n^{3-\tau}}{\ell_n} \sum_{k: d_k < \varepsilon(t_n)^{-1}} d_k^2 \sim Cn^{2\alpha - 1}t_n^{3-\tau}  \sum_{k\geq Cn (t_n/\varepsilon)^{1/\alpha}}k^{-2\alpha} \\
& \sim C n^{2\alpha - 1}t_n^{3-\tau} \int_{Cn (t_n/\varepsilon)^{\tau-1}}^\infty x^{-2\alpha} \dif x \sim  C n^{2\alpha - 1}t_n^{3-\tau} (Cn (t_n/\varepsilon)^{\tau-1})^{1-2\alpha} \sim C\varepsilon^{3-\tau},
\end{eq}
and 
\begin{gather}\label{eq:tauberian-II}
\frac{(III)}{t_n^{\tau-2}} \leq  \frac{C}{t_n^{(\tau-2)}\ell_n}
\sum_{k: d_k > (\varepsilon t_n)^{-1}} d_k \leq \frac{C n^{\alpha-1}}{t_n^{\tau-2}}
\int_{1}^{Cn (t_n \varepsilon)^{\tau-1}} \frac{\dif x}{x^{\alpha}} \sim C \varepsilon^{\tau-2}.
\end{gather}
Also, we compute $(II)$ by 
\begin{eq}
\frac{(II)}{t_n^{\tau-2}} & = \frac{1}{t_n^{\tau-2} \ell_n } \sum_{k: d_k \in [\varepsilon t_n^{-1}, (\varepsilon t_n)^{-1}]} d_k (1-\e^{-t_n d_k}) \\
&\sim \frac{n^{\alpha - 1}}{\mu t_n^{\tau-2}} \sum_{k\in [c_0 n (t_n \varepsilon)^{\tau-1}, c_1 (t_n/\varepsilon)^{\tau -1}]} c_{\sss F} k^{-\alpha} \big(1-\e^{-t_n (c_{\sss F}n/k)^{\alpha}}\big)\\
& = \frac{1}{n t_n^{\tau -1}} \sum_{z\in [c_0\varepsilon^{\tau -1}, c_1/\varepsilon^{\tau-1}]} c_{\sss F} z^{-\alpha} (1-\e^{-c_{\sss F}z^{-\alpha}}),
\end{eq}where we have put $k = n t_n^{\tau -1} z$, so that the  $z$ values increase by $1/(n t_n^{\tau -1})$ in the final sum. 
Thus, in the iterated limit $\lim_{\varepsilon \to 0} \limsup_{n\to\infty}$,  
\begin{eq}
\frac{(II)}{t_n^{\tau-2}} \to \int_0^\infty c_{\sss F} z^{-\alpha} (1-\e^{-c_{\sss F}z^{-\alpha}}) \dif z = \kappa,
\end{eq}
which yields \eqref{eq:asymp-laplace} by combining it with \eqref{eq:tauberian-I} and \eqref{eq:tauberian-II}.

Let us now consider Case (II), i.e., the i.i.d degree setup. 
We have assumed that the degree sequence is ordered in a non-decreasing manner, 
i.e., $d_i$ is the $i$-th order statistic of the i.i.d samples. 
We use the following construction from \cite[Section 13.6]{B68}. 
   Let $(E_1,E_2,\dots )$ be an i.i.d sequence of unit-rate exponential random variables and let $\Gamma_i:= \sum_{j=1}^iE_j$. Let
  \begin{equation}\label{eq:iid-degee-coupling}
   \bar{d}_i=(1-F)^{-1}(\Gamma_i/\Gamma_{n+1}).
  \end{equation} 
Then $(d_1,\dots,d_n) \stackrel{d}{=} (\bar{d}_1,\dots,\bar{d}_n)$.  
Now, $\Gamma_i$'s follow a Gamma distribution with shape parameter $n$ and scale parameter 1. 
Note that, by the stong law of large numbers, $\Gamma_{n+1}/n \asto 1$. 
Thus,  for each fixed $i\geq 1$, $\Gamma_{n+1}/(n\Gamma_i)\asto 1/\Gamma_i$. 
Using \eqref{eq:iid-degee-coupling}, we see that $\boldsymbol{d}$ satisfies Assumption~\ref{assumption1}(i) almost surely with $\theta_i=(C_F/\Gamma_i)^{\alpha}$. 
To see that $(\theta_i)_{i\geq 1} \in \ell^2_{\shortarrow} \setminus \ell^1_{\shortarrow}$, observe that $\Gamma_i/i\asto 1$, and $\alpha\in (1/2,1)$.
Next, the first condition in Assumption~\ref{assumption1}(ii) follows from the strong law of large numbers.  
To see the second condition, we note that $\sum_i \Gamma_i^{-2\alpha} <\infty$ almost surely. Now using the fact that $\Gamma_{n+1}/n\asto 1$, we can use arguments identical to \eqref{eq:second-moment} to show that $\lim_{K\to\infty}\limsup_{n\to\infty} n^{-2\alpha}\sum_{i>K}d_i^2=0$ on the event $\{\sum_{i=1}^{\infty}\Gamma_i^{-2\alpha}<\infty\}\cap \{\Gamma_{n+1}/n\to 1\}$. Thus, we have shown that the third condition of Assumption~\ref{assumption1}\eqref{assumption1-2} holds almost surely. 
The verification of Assumption~\ref{assumption-supercrit} is also identical to Case-(I) if we do the computations conditionally on the Gamma random variables and use the above asymptotics.

\paragraph{Extension to the Norros-Reittu model.}
A related model where one would expect the same behavior as the configuration model is the multigraph version of the Norros–Reittu model or the Poisson graph process~\cite{NR06}.
Given a weight sequence $(w_i)_{i\in [n]}$, the Norros-Reittu multigraph is the multipgraph generated by putting $\mathrm{Poisson} (w_iw_j/L_n)$ many edges between vertices $i$ and $j$, where $L_n = \sum_{i\in [n]} w_i$.
If Assumptions~\ref{assumption1},~\ref{assumption-supercrit} holds with $(d_i)_{i\in [n]}$ replaced by $(w_i)_{i\in [n]}$, then we expect the same results for percolation on the Norros-Reittu multigraph about the critical and near critical regimes as described above. 
We do not pursue the Norros-Reittu multigraph here.

\paragraph*{Open Problems.} We next state some open problems: \\ 

\noindent{\bf Open Problem 1.} Theorem~\ref{thm:main} studies convergence of $\mathbf{Z}_n(\lambda)$ for each fixed $\lambda$. 
It will be interesting to study the distribution of $(\mathbf{Z}_n(\lambda))_{\lambda>0}$ as a stochastic process, when the percolated graphs are coupled through the Harris coupling. 
In the $\tau>4$ and $\tau\in (3,4)$ regimes, such evolution of critical components is described by the so-called augmented multiplicative coalescent process.
However, we do not expect the limit to be the augmented multiplicative coalescent here. 
This is clear from the fact that the scaling limit in \eqref{defn::limiting::process} is not related to the general characterization of exploration processes that arise in relation to multiplicative coalescent  in \cite{AL98}. 
Heuristically, one would expect that if $\sum_{i\in \sC}d_i\1\{i\text{ is hub}\}$ denotes the mass of a component, then the components would merge at rate proportional to their masses, but additionally, there are immigrating vertices of degree-one that keep on increasing the component sizes as well. 
The description of the process, and proving its Feller properties and entrance boundary conditions, are interesting open challenges. \\

\noindent{\bf Open Problem 2.}
Is it possible to prove that the metric structure of components converges in a suitable topology? 
This question is motivated by a strong notion of structural convergence of critical components that was first established in \cite{ABG09} ($\tau >4$) and \cite{BHS15} ($\tau\in (3,4)$).  
Since the components have small distances, 
it may be natural to consider the local-weak convergence framework.
However, the hubs within components have unbounded degrees, which is not covered directly in the local-weak convergence framework.

\section{Properties of the excursions of the limiting process}\label{sec:properties-exploration}
In this section, we prove some good properties of the process \eqref{defn::limiting::process} that allows us to conclude the convergence of largest excursion lengths from the stochastic process convergence. 
In Section~\ref{sec:cnt-det-funct}, we identify these good properties for functions in $\mathbb{D}_+[0,\infty)$ that ensure continuity of the largest excursion map. Then, we prove in Section~\ref{sec:limit-good} that $\biS$ satisfies these good properties almost surely. 

\subsection{Continuity of the largest excursion map} \label{sec:cnt-det-funct}
Recall the definitions of excursions from \eqref{defn:excursion}. Also, recall from Section~\ref{sec:notation} that $\ubar{f}(t) = \inf_{u\leq t} f(u)$ and $\mathscr{Z}_f = \{t: f(t) = \ubar{f}(t)\}$.
Define the set of excursions of $f$ as
\begin{equation}
\mathcal{E}_f:= \{(l,r): (l,r) \text{ is an excursion of }f\}.
\end{equation} 
We denote  the set of excursion begin-points (or left-points) and end-points (or right-points) by $\cL_f$ and $\cR_f$ respectively, i.e.,
\begin{equation}
\cL_f:= \{l\geq 0: (l,r)\in \mathcal{E}_f \text{ for some }r\}\quad \text{and}\quad \cR_f:= \{r\geq 0: (l,r)\in \mathcal{E}_f\text{ for some }l\}.
\end{equation}
We will use the following elementary fact: 
\begin{fact}\label{fact:cont-r}
Let $f\in \mathbb{D}_+[0,\infty)$. Then, for all $r\in \cR_f\setminus \{\infty\}$, $f$ is continuous at $r$. Consequently, $r\in \mathscr{Z}_f$. 
\end{fact}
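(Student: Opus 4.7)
The plan is to exploit the two structural features of $f\in\mathbb{D}_+[0,\infty)$: (a) $f$ is right-continuous with only positive jumps, so $f(t-)\le f(t)$ for every $t$; and (b) by \eqref{continuity-ubar-f}, the running infimum $\ubar{f}$ is continuous on $[0,\infty)$. I will prove the continuity of $f$ at $r$ first and deduce $r\in\mathscr{Z}_f$ as a quick consequence.

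Fix an excursion $(l,r)$ with $r<\infty$. By the definition \eqref{defn:excursion}, $r\in \mathrm{cl}(\mathscr{Z}_f)$, so I can pick a sequence $(t_n)\subset \mathscr{Z}_f$ with $t_n\to r$. The key observation is that no such sequence can sit in the open interval $(l,r)$, since there $f-\ubar{f}>0$ by definition of an excursion; nor can it have a subsequence converging to $r$ from below while remaining $\le l$, because $l<r$. Hence after passing to a subsequence I may assume $t_n\downarrow r$ with $f(t_n)=\ubar{f}(t_n)$ for every $n$ (or $t_n=r$ eventually, which is the trivial case where the statement follows directly). Right-continuity of $f$ at $r$ gives $f(t_n)\to f(r)$, and continuity of $\ubar{f}$ gives $\ubar{f}(t_n)\to \ubar{f}(r)$. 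Passing to the limit yields $f(r)=\ubar{f}(r)$, which already settles the second assertion $r\in\mathscr{Z}_f$.

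It remains to verify left-continuity of $f$ at $r$. For any $t\in (l,r)$ one has $f(t)>\ubar{f}(t)\ge \ubar{f}(r)=f(r)$, where I used that $\ubar{f}$ is non-increasing only up to its continuity and then $\ubar{f}(r)=f(r)$ from the previous paragraph. Letting $t\uparrow r$ and using the continuity of $\ubar{f}$ gives $f(r-)\ge \ubar{f}(r)=f(r)$. Combining this with $f(r-)\le f(r)$ (from property (a)) forces $f(r-)=f(r)$, which together with right-continuity proves continuity of $f$ at $r$.

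There is no real obstacle here; the only subtle point is organising the case analysis so that the approach to $r$ from within $\mathrm{cl}(\mathscr{Z}_f)$ is forced to be from above. Once that is spelled out, both conclusions drop out immediately from the continuity of $\ubar{f}$ guaranteed by \eqref{continuity-ubar-f} and the positive-jump property defining $\mathbb{D}_+[0,\infty)$.
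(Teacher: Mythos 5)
Your proof is correct and rests on the same ingredients as the paper's — the positive-jump property of $\mathbb{D}_+[0,\infty)$, the continuity of $\ubar{f}$ from \eqref{continuity-ubar-f}, and the fact that points of $\mathscr{Z}_f$ can only accumulate at $r$ from the right (or at $r$ itself), since $f-\ubar{f}>0$ on $(l,r)$. The paper merely packages this as a contradiction (a positive jump at $r$ would make $f-\ubar{f}$ strictly positive on a full neighbourhood of $r$, contradicting $r\in\mathrm{cl}(\mathscr{Z}_f)$), establishing continuity first and deducing $r\in\mathscr{Z}_f$, whereas you run the same facts forwards in the reverse order; both arguments are sound.
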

\begin{proof}
Using the right-continuity of $f$, it suffices to show that $f(r) = f(r-)$. 
Suppose that is not the case. 
Since $f$ has positive jumps only, we must have that $f(r-) < f(r)$. 
Since $r$ is an excursion ending point, there exists $\varepsilon>0$ such that $f(t) - \ubar{f}(t) > 0 $ for all $t\in (r-\varepsilon,r)$. 
On the other hand, using the right-continuity of $f$ and the fact that $f(r) >f(r-)$, we obtain that $f(t) - \ubar{f}(t) > 0 $ for all $t\in [r,r+\varepsilon)$ for some  $\varepsilon>0$. Thus, there exists a sufficiently small $\varepsilon>0$ such that $f(t) - \ubar{f}(t) > 0 $ for all $t\in (r-\varepsilon,r+\varepsilon)$. 
This contradicts the fact that $r \in \mathrm{cl}(\mathscr{Z}_f)\setminus \{\infty\}$. 
\end{proof}
For $f\in \mathbb{D}_+[0,\infty)$, let $\phi_i(f)$ be the length of the $i$-th largest excursion of $f$. 
Also, let $\mathcal{A}_i(f)$ denote the area under $i$-th largest excursion of $f$.  
We will show that if $f_n\to f$ in $\mathbb{D}[0,\infty)$ then $\phi_i$ and $\mathcal{A}_i$ converge when the limiting function has some good properties. 
Let us start by describing these \emph{good} properties: 
\begin{defn}[Good functions]\label{defn::good_function}\normalfont A  function $f\in \mathbb{D}_+[0,\infty)$ is said to be \emph{good} if the following holds:
\begin{enumerate}[(a)]
\item For all $r\in \cR_f\setminus \{\infty\}$, $r$ is not a local minimum of $f$.

\item There does not exist any interval $(q_1,q_2)$ with $q_1,q_2\in \Q_+$ such that $(q_1,q_2) \subset \mathscr{Z}_f$.

\item For all $(l,r) \in \cE_f$ with $r<\infty$, there exists $\varepsilon_0=\varepsilon_0(l,r)>0$ such that the following holds for all $\varepsilon \in (0,\varepsilon_0)$: 
There exists $\delta = \delta(\varepsilon,l,r)>0$ such that 
 \begin{equation}\label{f-large-interval}
 f(t)> f(r)+\delta\quad \forall t\in (l+\varepsilon,r-\varepsilon).
 \end{equation}
\item $f$ does not have any infinite excursion, i.e., $\phi_1(f)<\infty$.

\item For any $\delta>0$, $f$ has only finitely many excursions of length at least~$\delta$.

\item For all $i\geq 1$, $\phi_{i+1} (f)< \phi_i(f)$. 
\end{enumerate} 
\end{defn}

 \begin{lemma}\label{lem::good:function:continuity} Suppose that $f\in \mathbb{D}_+[0,\infty)$ is good. 
 Further, let $(f_n)_{n\geq 1} \subset \mathbb{D}[0,\infty)$ be such that $f_n\to f$ in $\mathbb{D}[0,\infty)$. 
 Moreover, let $\limsup_{n\to\infty} \phi_1(f_n) < \infty$, and if $z_n(T)$ denotes the length of the largest excursion of $f_n$ starting after $T$, then $\lim_{T\to\infty}\limsup_{n\to\infty} z_n(T) = 0$. 
 Then, for all $m\geq 1$, as $n\to\infty$,
 \begin{eq}
 (\phi_i(f_n))_{i\in [m]} \to(\phi_i(f))_{i\in [m]}, \quad \text{and} \quad  (\mathcal{A}_i(f_n))_{i\in [m]} \to(\mathcal{A}_i(f))_{i\in [m]}.
 \end{eq}
 \end{lemma}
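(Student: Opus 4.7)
The plan is to reduce to uniform convergence via the Skorokhod embedding, then match excursions of $f_n$ to the top-$m$ excursions of $f$ one by one. Specifically, convergence $f_n\to f$ in $\mathbb{D}[0,\infty)$ provides increasing homeomorphisms $\lambda_n:[0,\infty)\to[0,\infty)$ with $\lambda_n\to \mathrm{id}$ uniformly on compacts and $f_n\circ\lambda_n\to f$ uniformly on compacts. Taking the running infimum is continuous under uniform convergence, so $(f_n-\ubar{f_n})\circ\lambda_n\to f-\ubar{f}$ uniformly on compacts. Combined with the tail hypothesis $\lim_{T\to\infty}\limsup_n z_n(T)=0$ and $\limsup_n\phi_1(f_n)<\infty$, all excursions of $f_n$ of length bounded below by a positive constant lie in a common compact $[0,T]$, so we only need to work on $[0,T]$.

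For each of the top-$m$ excursions $(l_i,r_i)$ of $f$ (enumerated with strict inequalities by property (f)), I would construct a \emph{matching} $f_n$-excursion $(l_i^n,r_i^n)$. By property (c), for small $\varepsilon>0$ there is $\delta>0$ with $f-\ubar{f}>\delta$ on $(l_i+\varepsilon,r_i-\varepsilon)$; uniform convergence then gives $(f_n-\ubar{f_n})(\lambda_n(t))>\delta/2$ on the same interval for $n$ large, so $\lambda_n((l_i+\varepsilon,r_i-\varepsilon))$ lies in a unique $f_n$-excursion, which I take as the definition of $(l_i^n,r_i^n)$. To pin down $r_i^n\to r_i$, Fact~\ref{fact:cont-r} gives $f(r_i)=\ubar{f}(r_i)$ together with continuity of $f$ at $r_i$, while property (a) supplies some $s'\in(r_i,r_i+\varepsilon)$ with $f(s')<f(r_i)$. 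Convergence at the continuity point $s'$ forces $f_n(\lambda_n(s'))<f(r_i)-\delta'$ eventually, hence $\ubar{f_n}(\lambda_n(s'))$ strictly drops below the excursion level, so $r_i^n<\lambda_n(s')$; sending $\varepsilon\downarrow 0$ yields $r_i^n\to r_i$, and the argument for $l_i^n\to l_i$ is symmetric. Thus $r_i^n-l_i^n\to\phi_i(f)$.

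It then remains to show that these are \emph{all} the top-$m$ excursions of $f_n$. Otherwise, along a subsequence there is an additional $f_n$-excursion $(\tilde l_n,\tilde r_n)$ with $\tilde r_n-\tilde l_n\geq\phi_m(f)-\varepsilon$ and $(\tilde l_n,\tilde r_n)\to(\tilde l,\tilde r)$. Property (b) prevents $(\tilde l,\tilde r)\subset\mathscr{Z}_f$ (else it would contain a rational subinterval inside $\mathscr{Z}_f$), so there is $s^\star\in(\tilde l,\tilde r)$ with $(f-\ubar{f})(s^\star)>0$, lying in some $f$-excursion $(l',r')$. The uniqueness built into the construction above (via property (c)) forces $(\tilde l_n,\tilde r_n)$ to be the matching $f_n$-excursion of $(l',r')$, so $(\tilde l,\tilde r)=(l',r')$. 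Hence $\phi(l',r')\geq\phi_m(f)-\varepsilon$, and the strict inequality $\phi_{m+1}(f)<\phi_m(f)$ from property (f) places $(l',r')$ among the top $m$ for $\varepsilon$ small enough. But then $(\tilde l_n,\tilde r_n)$ coincides with one of the already matched excursions, a contradiction. Property (d) serves as a safety net: the same argument rules out matching against an infinite excursion.

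For the areas, uniform convergence of $(f_n-\ubar{f_n})\circ\lambda_n$ to $f-\ubar{f}$ on $[0,T]$, together with $l_i^n\to l_i$, $r_i^n\to r_i$, gives $\mathcal{A}_i(f_n)\to\mathcal{A}_i(f)$ after the substitution $t=\lambda_n(s)$, using $|\lambda_n(s)-s|\to 0$ uniformly to handle the measure $d\lambda_n(s)$ by dominated convergence. The main obstacle I anticipate is the careful interplay between properties (a), (c), and Fact~\ref{fact:cont-r} in pinning down $r_i^n\to r_i$: property (c) controls the excursion from within, but property (a) is essential to prevent $r_i^n$ from drifting past $r_i$, since without a strict dip of $f$ just beyond $r_i$ one could not force $\ubar{f_n}$ to drop below the excursion height and close off the $f_n$-excursion on time.
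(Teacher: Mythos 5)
Your proposal is correct in substance and relies on the same three mechanisms as the paper's proof: property (c) plus the time-changed uniform convergence to show that $\lambda_n((l+\varepsilon,r-\varepsilon))$ sits inside a single $f_n$-excursion (the lower bound), Fact~\ref{fact:cont-r} together with property (a) to force $\ubar{f}_n$ to drop just past each excursion end-point of $f$ so that the matched $f_n$-excursion closes off in time, and property (f) to identify the matched excursions as the ordered largest ones before passing to areas. Where you genuinely diverge is the ``no other large excursions'' step: the paper runs an explicit covering argument, using property (b) to produce end-points $r_1<\dots<r_k$ of $f$ spaced at most $\phi_1(f)+\varepsilon$ apart and then showing $f_n$ has an end-point within $O(\varepsilon)$ of each, which caps $\limsup_n\phi_1(f_n)$ directly; you instead extract a convergent subsequence of any putative extra long $f_n$-excursion, use property (b) to rule out its limit lying in $\mathscr{Z}_f$, and identify it with an already-matched excursion of $f$ via property (f). Both routes work; the paper's covering is more hands-on and avoids subsequences, while yours generalizes more cleanly to matching all top-$m$ excursions in one pass.

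One point you should not wave away: the claim that ``the argument for $l_i^n\to l_i$ is symmetric'' is not literally true. Since $f$ has only positive jumps, property (a) and Fact~\ref{fact:cont-r} are statements about excursion \emph{end}-points, and there is no mirror-image statement at begin-points (an excursion typically opens with a jump, so there is no dip of $f$ just before $l_i$ to exploit). To prevent $l_i^n$ from drifting left of $l_i$ you must, as the paper does, invoke property (b) to find an excursion end-point $r_i^0\in(l_i-\varepsilon,l_i)$ of $f$ and then run your right-endpoint argument at $r_i^0$, forcing $f_n$ to have an end-point in $(l_i-4\varepsilon,l_i+2\varepsilon)$. This control is needed both for the upper bound on $r_i^n-l_i^n$ and for the convergence of $\mathcal{A}_i(f_n)$, so it is a step to spell out rather than a symmetry to cite; with it inserted, your argument is complete.
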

\begin{proof}
 The proof  here is for $m=1$, and for $m>1$, we can proceed inductively. 
 Using Definitions~\ref{defn::good_function}(d),(e), we can take $T>0$ and $n_0\geq 1$ large so that 
 the largest excursions of $f_n$ and $f$ end before $T$ for all $n\geq n_0$. 
 Let $\mathfrak{L}$ denote the set of continuous functions $\Lambda:[0,\infty)\to [0,\infty)$  that are strictly increasing and satisfy $\Lambda(0)=0, \Lambda(T)=T$.  
 Suppose $(l,r)$ is the longest excursion of $f$ on $[0,T]$, and thus $\phi_1(f)=r-l$. 
 We will first show that $\lim_{n\to\infty}\phi_1(f_n)=\phi_1(f)$.

 Fix $\varepsilon,\delta>0$ such that \eqref{f-large-interval} holds. 
 Let $||\cdot||_{\sss T}$ denote the sup-norm on $[0,T]$.  
 Recall the definition of the metric for Skorohod $J_1$-topology from \cite[(12.13)]{Bil99}.
 Since $f_n\to f$ in $\mathbb{D}[0,T]$, there exists $(\Lambda_n)_{n\geq 1} \subset \mathfrak{L}$, and $n_1\geq n_0$ such that for all $n\geq n_1$, 
 \begin{equation}\label{f_n-f-close}
 ||f_n\circ \Lambda_n-f||_{\sss T}< \frac{\delta}{2}\quad  \text{and}\quad  ||\Lambda_n-I||_{\sss T}< \varepsilon,
 \end{equation}where $I$ is the identity function. 
Using \eqref{f-large-interval} and \eqref{f_n-f-close}, for all $t\in (l+\varepsilon,r-\varepsilon)$  and $n\geq n_1$,
 \begin{eq}
 f_n\circ \Lambda_n (t) > f(t) - \frac{\delta}{2} > f(r) + \frac{\delta}{2} = \ubar{f}(r) + \frac{\delta}{2}, 
 \end{eq}where the last equality is due to $r\in \mathscr{Z}_f$ from Fact~\ref{fact:cont-r}.
 Thus, using $||\Lambda_n-I||_{\sss T}< \varepsilon$ from \eqref{f_n-f-close}, 
 \begin{equation} \label{liminf_f_excursion}
  f_n(t)> \ubar{f}(r)+\frac{\delta}{2}\quad \forall t \in (l+2\varepsilon, r-2\varepsilon).
 \end{equation}
Next, note that the infimum operation is continuous in the Skorohod $J_1$-topology \cite[Theorem 13.4.1]{W02}, and thus $\ubar{f}_n \to \ubar{f}$ in $\mathbb{D}[0,T]$. Moreover, using \eqref{continuity-ubar-f},  $\ubar{f} \in \mathbb{C}[0,T]$, and therefore, there exists $n_2\geq n_0$,  such that for all $n\geq n_2$ 
\begin{eq}\label{inf-conv}
\|\ubar{f}_n - \ubar{f}\|_{\sss T} < \frac{\delta}{4}.
\end{eq}
Using $\ubar{f}(t) = \ubar{f}(r)$ for all $t\in [l,r]$, this implies that, for all $n\geq n_2$,
\begin{equation} \label{liminf_f_excursion-2}
  \ubar{f}(r) = \ubar{f}(t)> \ubar{f}_n(t)-\frac{\delta}{4}\quad \forall t \in (l+2\varepsilon, r-2\varepsilon),
 \end{equation}
and consequently \eqref{liminf_f_excursion} yields that for all $n\geq \max\{n_1,n_2\}$
\begin{eq}\label{eq:excursion-f-n}
 f_n(t)-\ubar{f}_n(t) > \frac{\delta}{4}\quad \forall t \in (l+2\varepsilon, r-2\varepsilon).
\end{eq}
 Thus,
\begin{equation}\label{eq:liminf-exc}
\liminf_{n\to\infty}\phi_1(f_n)\geq r-l-4\varepsilon= \phi_1(f)-4\varepsilon,
\end{equation}which provides the required lower bound. 
We now turn to a suitable upper bound on the quantity  $\limsup_{n\to\infty}\phi_1(f_n)$. 
We claim that, using Definition~\ref{defn::good_function}(b), one can find $r_1,\dots,r_k\in \cR_f$ such that $r_1\leq \phi_1(f)+\varepsilon, T-r_k< \phi_1(f)+\varepsilon, $ and $r_i-r_{i-1}\leq \phi_1(f)+\varepsilon, \forall i=2, \dots,k$. 
Indeed, since $\phi_1(f)$ is the largest excursion length of $f$, if there is no excursion end-point in between 0 and $\phi_1(f)+\varepsilon$, then there is no excursion begin-point in $[0,\varepsilon)$. The latter shows that the interval $[0,\varepsilon)$ is contained in $\mathscr{Z}_f=\{t: f(t) = \ubar{f}(t)\}$, which contradicts Definition~\ref{defn::good_function}(b). The existence of the points $r_2,\dots,r_k$ can be shown inductively using similar argument as above. Let $l_1,\dots,l_k$ be the excursion begin-points corresponding to the endpoints $r_1,\dots,r_k$.  
We will show that, for all $i$, $f_n$ will have an  excursion within $(l_i-4\varepsilon, r_i+2\varepsilon)\subset (r_{i-1}-4\varepsilon, r_i+2\varepsilon)$, so that the largest excursion of $f_n$ is contained inside one of these intervals.

Using Definition \ref{defn::good_function}(a), $r_i$ is not a local minimum, and thus for any $\varepsilon >0$ (sufficiently small), there exists $\delta >0$ and $t_i\in (r_i,r_i+\varepsilon)$ such that $f(r_i)-f(t_i)> \delta$. 
 We also let $\delta>0$ be sufficiently small such that \eqref{f_n-f-close} holds. 
 Thus, using \eqref{f_n-f-close}, for all $n\geq n_1$,
 \begin{equation}
  f(r_i)-f_n(\Lambda_n(t_i))\geq f(r_i)-f(t_i) -\frac{\delta}{2} > \frac{\delta}{2}.
 \end{equation}
 Since $t_i\in (r_i,r_i+\varepsilon)$, we have that $t_i^n = \Lambda_n(t_i)\in (r_i-\varepsilon,r_i+2\varepsilon)$. 
 Thus, for all $n\geq n_1$, there exists a point $t_i^n\in (r_i - \varepsilon,r_i+2\varepsilon)$ such that
 \begin{equation}\label{exc-id-1}
  f(r_i)-f_n(t_i^n)> \frac{\delta}{2}.
 \end{equation}
 Next, using  \eqref{inf-conv}, 
 \begin{eq}\label{exc-id-2}
 \ubar{f}_n(r_i-3\varepsilon)\to \ubar{f}(r_i-3\varepsilon) \geq \ubar{f}(r_i)= f(r_i),
 \end{eq}since $r_i\in \mathscr{Z}_f$.
Combining \eqref{exc-id-1} and \eqref{exc-id-2}, we see that $\ubar{f}_n(r_i-3\varepsilon) > \ubar{f}_n(t_i^n)$, and by \eqref{eq:excursion-f-n}, we also have that $f_n(r_i-3\varepsilon) - \ubar{f}_n(r_i-3\varepsilon)>0$. 
Thus  $f_n$ must have an excursion end-point in $(r_i-3\varepsilon,r_i+2\varepsilon)$. 
 Also, using Definition~\ref{defn::good_function}(b), $f$ has an excursion end-point  $r^0_i\in(l_i-\varepsilon,l_i)$. The previous argument shows that $f_n$ has to have an excursion end-point in $(r^0_i-3\varepsilon, r^0_i+2\varepsilon)$ and thus in $(l_i-4\varepsilon,l_i+2\varepsilon)$. Therefore, 
 \begin{equation}\label{eq:limsup-exc}
 \limsup_{n\to\infty} \phi_1(f_n)\leq \max_{i\in [k]} (r_i-l_i) + 6\varepsilon \leq \max_{i\in [k]} (r_i-r_{i-1}) + 6\varepsilon \leq \phi_1(f)+7\varepsilon.
\end{equation}  Hence, the convergence of the largest excursion length follows from  \eqref{eq:liminf-exc} and \eqref{eq:limsup-exc}.

Next, we show that $\lim_{n\to\infty} \mathcal{A}_1(f_n) = \mathcal{A}_1(f)$. 
Let $e= (l,r)$ be the largest excursion of~$f$.
Using \eqref{eq:excursion-f-n}, the interval $(l-2\varepsilon,r+2\varepsilon)$ is part of some excursion of $f_n$. Let us denote this excursion by $e_n = (L_n(e),R_n(e))$. We will show that $e_n$ is the largest excursion of $f_n$ when $n$ is large. Indeed, the arguments above already show that 
\begin{eq}\label{sandwich-excursion}
l-4\varepsilon\leq L_n (e)\leq l+2\varepsilon, \quad \text{and} \quad r-2\varepsilon \leq R_n(e) \leq r+2\varepsilon,
\end{eq}
and thus $R_n(e) - L_n(e) \geq r-l-4\varepsilon$. 
Now, using Definition~\ref{defn::good_function}(f), we can take $\varepsilon>0$ sufficiently small such that $\phi_2(f_n) < r-l -4\varepsilon$ for all sufficiently large $n$. 
Thus, $e_n = (L_n(e), R_n(e))$ must be the largest excursion of $f_n$. 
 The convergence of $\mathcal{A}_1(f_n)$ follows by using $f_n\to f$ in $\mathbb{D}[0,\infty)$ together with $L_n(e) \to l$ and $R_n(e) \to r$ as $n\to\infty$. 
\end{proof}
\begin{remark} \normalfont
We emphasize that the strict ordering between excursion lengths in  Definition~\ref{defn::good_function}(f) is only used in the convergence of $\mathcal{A}_i(f_n)$. This ensures that the location of largest excursions of $f_n$ and $f$ approximately coincide, which is strictly stronger than requiring the convergence of excursion lengths. 
\end{remark}

Next, we define what it means for a stochastic process $\mathbf{X} \in \mathbb{D}_+[0,\infty)$ to be good:
\begin{defn}[Good stochastic process]\label{defn:good_function-infty}\normalfont A stochastic process $\mathbf{X}$ with sample paths in  $\mathbb{D}_+[0,\infty)$ is said to be \emph{good} if the sample path satisfies all the conditions of Definition~\ref{defn::good_function} almost surely. 
\end{defn}
The following is a direct consequence of Lemma~\ref{lem::good:function:continuity}: 
\begin{proposition}\label{prop:conv-exc-length-area}
Consider a sequence of stochastic processes $(\mathbf{X}_n)_{n\geq 1}$ and a good stochastic process $\mathbf{X}$ such that $\mathbf{X}_n \xrightarrow{\sss d} \mathbf{X}$. Also, let $(\phi_1(\mathbf{X}_n))_{n\geq 1}$ be tight, and if $Z_n(T)$ denotes the length of the largest excursion of $\mathbf{X}_n$ starting after time $T$, then for any $\varepsilon>0$,  $\lim_{T\to\infty}\limsup_{n\to\infty} \PR(Z_n(T) >\varepsilon)= 0$. 
Then, for all $m\geq 1$,
\begin{eq}
\big(\phi_i(\mathbf{X}_n), \mathcal{A}_i(\mathbf{X}_n)\big)_{i\in[m]}\dto \big(\phi_i(\mathbf{X}), \mathcal{A}_i(\mathbf{X})\big)_{i\in[m]}.
\end{eq}
\end{proposition}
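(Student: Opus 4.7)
The plan is to reduce Proposition~\ref{prop:conv-exc-length-area} to the deterministic continuity statement of Lemma~\ref{lem::good:function:continuity} via a Skorohod representation combined with a subsequence argument. To prove convergence in distribution of $\mathbf{Y}_n := (\phi_i(\mathbf{X}_n), \mathcal{A}_i(\mathbf{X}_n))_{i\in[m]}$, I use the subsequence principle: it suffices to show that from every subsequence $(n_j)$ we can extract a further subsequence along which $\mathbf{Y}_n \dto (\phi_i(\mathbf{X}), \mathcal{A}_i(\mathbf{X}))_{i\in[m]}$.

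Fix such a subsequence $(n_j)$. Using the hypothesis $\lim_{T\to\infty}\limsup_n \PR(Z_n(T) > \varepsilon)=0$ applied with $\varepsilon=1/k$, I select $T_k \uparrow \infty$ and a further subsequence $(n_{j_k})$ satisfying $\PR(Z_{n_{j_k}}(T_k) > 1/k) \leq 2^{-k}$ for every $k$. By the tightness of $(\phi_1(\mathbf{X}_n))_n$ together with the hypothesis $\mathbf{X}_n \dto \mathbf{X}$, the joint sequence $(\mathbf{X}_{n_{j_k}}, \phi_1(\mathbf{X}_{n_{j_k}}))_k$ is tight in $\mathbb{D}[0,\infty)\times [0,\infty)$; passing to a further subsequence (still denoted $(n_{j_k})$), I may assume joint convergence in distribution to $(\mathbf{X}, Y)$ for some almost surely finite $Y$.

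Next, I apply Skorohod's representation theorem to realize these processes on a common probability space so that $\mathbf{X}_{n_{j_k}} \to \mathbf{X}$ almost surely in $\mathbb{D}[0,\infty)$ and $\phi_1(\mathbf{X}_{n_{j_k}}) \to Y < \infty$ almost surely. Since $\PR(Z_{n_{j_k}}(T_k)>1/k) \leq 2^{-k}$ is a distributional statement preserved under the Skorohod coupling, Borel-Cantelli gives $Z_{n_{j_k}}(T_k)\to 0$ almost surely. Because $T\mapsto Z_n(T)$ is non-increasing, for any fixed $T$ and all $k$ large enough so that $T_k \geq T$ one has $Z_{n_{j_k}}(T) \leq Z_{n_{j_k}}(T_k)$, and hence $\lim_{T\to\infty}\limsup_k Z_{n_{j_k}}(T)=0$ almost surely. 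Finally, by Definition~\ref{defn:good_function-infty}, the limiting process $\mathbf{X}$ is good almost surely.

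On the probability-one event where all of the above hold, the hypotheses of Lemma~\ref{lem::good:function:continuity} are met sample-path by sample-path, and that lemma yields $\mathbf{Y}_{n_{j_k}} \to (\phi_i(\mathbf{X}), \mathcal{A}_i(\mathbf{X}))_{i\in[m]}$ almost surely, and hence in distribution, along the chosen subsequence. The main obstacle is the bookkeeping in this subsequence extraction, since the tightness hypotheses of the proposition are purely distributional while Lemma~\ref{lem::good:function:continuity} demands almost-sure $\limsup$ control of both $\phi_1(\mathbf{X}_n)$ and the late-excursion tail $Z_n(T)$; the device of choosing $T_k \uparrow \infty$ with summable exceptional probabilities, and then exploiting monotonicity of $Z_n(T)$ in $T$, is precisely what enables this upgrade.
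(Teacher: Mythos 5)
Your overall strategy---subsequence principle, augmenting the path with auxiliary coordinates, Skorohod representation, then invoking Lemma~\ref{lem::good:function:continuity} pathwise---is the natural way to make rigorous what the paper simply asserts as a ``direct consequence'' of that lemma, and the treatment of $\phi_1(\mathbf{X}_n)$ (adding it as a coordinate, extracting a jointly convergent subsequence, identifying the limit coordinate with $\phi_1$ of the coupled path) is fine. The problem is the step that upgrades the late-excursion control. First, the monotonicity inequality is stated backwards: since $T\mapsto Z_n(T)$ is non-increasing, $T_k\geq T$ gives $Z_{n_{j_k}}(T_k)\leq Z_{n_{j_k}}(T)$, not the reverse. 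More seriously, even with the inequality corrected, the conclusion does not follow: knowing $Z_{n_{j_k}}(T_k)\to 0$ a.s.\ with $T_k\uparrow\infty$ only controls excursions starting after the (possibly very rapidly growing) times $T_k$, and says nothing about excursions of $\mathbf{X}_{n_{j_k}}$ starting in $(T,T_k)$ for a fixed $T$. Thus $\limsup_k Z_{n_{j_k}}(T)$ may fail to be small for every fixed $T$, and the hypothesis $\lim_{T\to\infty}\limsup_k z_k(T)=0$ of Lemma~\ref{lem::good:function:continuity} is not verified; in particular the largest excursion of $\mathbf{X}_{n_{j_k}}$ could drift off to infinity along your subsequence, which is exactly the pathology that hypothesis is meant to exclude. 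Note also that no choice of $T^{(m)}$ and Borel--Cantelli can rescue this directly, since for fixed $T$ the probabilities $\PR(Z_n(T)>\varepsilon)$ are only eventually small, not summable.

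A correct way to carry out your plan is to observe that the hypothesis $\lim_{T\to\infty}\limsup_n\PR(Z_n(T)>\varepsilon)=0$ is precisely tightness, for each $\varepsilon>0$, of the random times $W_n^{\varepsilon}:=\sup\{t\geq 0:\ \text{an excursion of }\mathbf{X}_n\text{ of length}>\varepsilon\text{ starts at }t\}$ (valued in $[0,\infty]$). Adjoin the countable family $(W_n^{1/m})_{m\geq 1}$ as further coordinates, extract a subsequence along which everything converges jointly, and apply Skorohod representation. Then for each $m$, $W_{n_{j_k}}^{1/m}$ converges a.s.\ to an a.s.\ finite limit, so a.s.\ there is a (random) $T$ with $Z_{n_{j_k}}(T)\leq 1/m$ for all large $k$; intersecting over $m$ and using monotonicity in $T$ gives $\lim_{T\to\infty}\limsup_k Z_{n_{j_k}}(T)=0$ a.s., after which Lemma~\ref{lem::good:function:continuity} applies pathwise exactly as you intend. (Alternatively, one can forgo almost-sure statements entirely and rerun the proof of Lemma~\ref{lem::good:function:continuity} to get convergence in probability of the functionals on events of probability $1-\delta$, but that requires opening up the lemma rather than using it as a black box.)
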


\subsection{The limiting process is good almost surely} \label{sec:limit-good}
In this section, we will show that the sample paths of $\biS$ are good almost surely.  
Throughout this section, we assume without loss of generality  that $\mu=1$ and $\sum_i \theta_i^2 = 1$ to simplify writing. 
An identical proof works for the general $\mu$ and $\bld{\theta}$ by replacing $\lambda$ with $\lambda'=\lambda \mu/\sum_{i}\theta_i^2$. 
Consider the sigma-field $\mathscr{F}_t = \sigma(\{\xi_i\leq s\}:s\leq t, i\geq 1)$, where $(\xi_i)_{i\geq 1}$ are the exploration random variables used in the definition of $\biS$ in \eqref{defn::limiting::process}, and, for a collection of sets $\cA$, $\sigma (\cA)$ denotes the minimum sigma-algebra containing all the sets in $\cA$. 
Then $(\mathscr{F}_t)_{t\geq 0}$ is a filtration and $\biS$ is adapted to $(\mathscr{F}_t)_{t\geq 0}$. 
Our goal is stated formally in the following proposition: 
\begin{proposition} \label{prop:limit-properties}
The sample paths of $\biS$ satisfy the conditions of {\rm Definition~\ref{defn::good_function}} almost surely.
\end{proposition}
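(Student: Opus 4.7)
My plan is to verify the six conditions (a)--(f) of Definition~\ref{defn::good_function} separately for the sample paths of $\biS$. Throughout I follow the paper's normalization $\mu=1$, $\|\bld{\theta}\|_2^2=1$, and work with the representation $\iS(t)=\lambda\sum_i \theta_i\mathcal{I}_i(t)-t$ where $\mathcal{I}_i(t)=\ind{\xi_i\leq t}$ and the $\xi_i\sim \mathrm{Exp}(\theta_i)$ are independent. Condition (f) (strict ordering of excursion lengths) is already credited in the paper to the non-atomicity statement in Lemma~\ref{lem:exc}, so I take it as given and focus on (a)--(e).

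For (d) and (e), the main input is a law-of-large-numbers statement for $\iS(t)$. I would bound $\E[\iS(t)]=\lambda\sum_i\theta_i(1-\e^{-\theta_i t})-t$ by splitting the sum at $\theta_i t=1$ and using $1-\e^{-x}\leq x\wedge 1$: the summand is at most $\theta_i^2 t$ for $\theta_i t\leq 1$ and at most $\theta_i$ for $\theta_i t>1$. Together with $\bld{\theta}\in\ell^2\setminus\ell^1$ and the tail decay of $\bld{\theta}$, this yields $\sum_i\theta_i(1-\e^{-\theta_i t})=o(t)$, so $\E[\iS(t)]\sim -t$. Since $\var{\iS(t)}=\lambda^2\sum_i\theta_i^2\e^{-\theta_i t}(1-\e^{-\theta_i t})\leq \lambda^2$ is uniformly bounded, Chebyshev and Borel--Cantelli along integer times give $\iS(n)/n\to -1$ a.s., and a total-variation estimate on each unit interval extends this to $\iS(t)/t\to -1$ almost surely, proving (d). For (e), observe first that only $\lfloor T/\delta\rfloor$ disjoint excursions of length $\geq \delta$ can fit into $[0,T]$, so the real issue is long excursions at arbitrarily large times. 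I would compute $\var{\iS(t+\delta)-\iS(t)}=\lambda^2\sum_i\theta_i^2\e^{-\theta_i t}(1-\e^{-\theta_i\delta})\to 0$ as $t\to\infty$ by dominated convergence, so for large $t$ the deterministic $-\delta$ drift dominates and $\iS(t+\delta)-\iS(t)\leq -\delta/2$ with high probability. A Borel--Cantelli argument over integer translates then rules out excursions of length $\geq \delta$ after a random time, giving (e).

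Condition (b) has a clean proof. If $(q_1,q_2)\subset \mathscr{Z}_f$ for fixed rationals $q_1<q_2$, then $\iS=\ubar{\iS}$ on $(q_1,q_2)$ and hence $\iS$ is non-increasing there; combined with $\iS$ having only positive jumps, this forces no $\xi_i$ to fall in $(q_1,q_2)$. By independence of the $\xi_i$'s, the probability of this event is $\prod_i(1-\e^{-\theta_i q_1}(1-\e^{-\theta_i(q_2-q_1)}))$, and $\sum_i \e^{-\theta_i q_1}(1-\e^{-\theta_i(q_2-q_1)})=\infty$ since $\sum_i\theta_i=\infty$ and $\theta_i\to 0$, so second Borel--Cantelli gives that this product vanishes. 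A countable union over rational pairs gives (b). For (a), I would leverage (b): if $r$ is an excursion endpoint that is also a local minimum, then by Fact~\ref{fact:cont-r} the point $r$ is a continuity point of $\iS$ with $\iS(r)=\ubar{\iS}(r)$, and $\iS\geq \iS(r)$ on $(r,r+\varepsilon)$ forces $\iS=\ubar{\iS}$ on this interval. But $(r,r+\varepsilon)$ contains a rational open subinterval, contradicting (b) on a full-probability event.

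The technical heart, and what I expect to be the main obstacle, is (c). The difficulty is that $\biS$ has jumps accumulating densely in time, so a priori one can have left-limits $\iS(t_n-)$ decreasing to $\iS(l)$ along jump times $t_n$ inside an excursion $(l,r)$ without $(l,t_n)$ itself being an excursion, destroying the uniform separation asserted in (c). My plan is to exploit a Markov-type property: conditionally on the excursion begin-point $l$ and the initial jump, the continuation of $\biS$ on $(l,\infty)$ is (after deleting the fired $\xi_i$'s) a shifted version of $\biS$, so the event that it returns exactly to its starting infimum strictly inside $(l,r)$ is governed by hitting distributions that, by the non-atomicity provided by Lemma~\ref{lem:exc} under \eqref{density-assumption}, charge no single level. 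Combined with a union bound over the countable set of excursions (finiteness of long excursions coming from (e), and smallness of short ones being handled by taking $\varepsilon$ small), this should give (c) almost surely.
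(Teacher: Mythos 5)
Your verifications of (b), (d) and (e) are essentially sound: (b) matches the paper's computation, and for (d)--(e) your first/second-moment route (bounding $\E[\iS(t)]$ and $\var{\iS(t)}$ directly, Chebyshev plus Borel--Cantelli along a grid, and using that the path can decrease at most linearly between grid points) is a workable, somewhat more elementary alternative to the paper's Doob--Meyer decomposition combined with an exponential martingale inequality and Doob's inequality. The genuine gaps are in (a) and (c).

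For (a), your deduction from (b) fails at the step ``$\iS\geq \iS(r)$ on $(r,r+\varepsilon)$ forces $\iS=\ubar{\iS}$ on this interval.'' A local minimum at $r$ only forces $\ubar{\iS}\equiv\iS(r)$ on $(r,r+\varepsilon)$; the path itself may jump immediately after $r$ and stay \emph{strictly above} $\iS(r)$ there, i.e.\ a new excursion may open at $r$ without the process ever descending below its past minimum. That scenario makes $r$ a local minimum, is fully compatible with (b) (the interval is not in $\mathscr{Z}_f$), and is precisely the degeneracy that condition (a) must exclude --- Lemma~\ref{lem::good:function:continuity} uses (a) exactly to guarantee that $\ubar{f}$ strictly decreases just after each excursion end-point. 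Ruling it out requires a probabilistic regularity estimate at the running infimum: the paper dominates the post-$T_q$ increments by $L(t)=\lambda\sum_i\theta_i\mathcal{N}_i(t)-t$ with independent rate-$\theta_i$ Poisson processes and shows $\PR\big(\inf\{t>0:L(t)<0\}=0\big)=1$ using $\sum_{i>K}\theta_i^2\to 0$. Nothing in your argument replaces this.

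For (c), your sketch misidentifies the event to be excluded and the tool to exclude it. Every finite excursion \emph{does} return to its base level at $r$ (Fact~\ref{fact:cont-r}), so ``returns exactly to its starting infimum strictly inside $(l,r)$'' is not the right null event; what must be ruled out is that at some \emph{interior} time $t_0$ of the excursion the left limit satisfies $\iS(t_0-)=\ubar{\iS}(l)$ while a jump at $t_0$ keeps the excursion alive (equivalently, a sequence of interior values decreasing to the base level, destroying the uniform $\delta$-separation). Lemma~\ref{lem:exc} gives non-atomicity of the fixed-time marginal $\iS(t)$, which does not transfer to the law of $\iS(T-)$ at a random time or to the hitting level of a data-dependent threshold, so the appeal to it here is not a proof. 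The paper's actual argument is again a regularity estimate: with $T_q(\varepsilon)$ the first time after $q$ that the process comes within $\varepsilon$ of $\inf_{u\le q}\iS(u)$, the excursion terminates within an additional $2\varepsilon$ with probability tending to $1$ as $\varepsilon\downarrow 0$, which forces $T_q^-$ to be an excursion end-point and yields \eqref{f-large-interval}. Finally, note that even the reduction of (f) to Lemma~\ref{lem:exc}, which you take for granted, requires the conditional regeneration step \eqref{eq:conditional-process}, not just the non-atomicity statement itself.
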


\noindent \emph{Proof of Proposition~\ref{prop:limit-properties}.} The verification of each of the conditions in Definition~\ref{defn::good_function} are given separately below. 

\paragraph*{Verification of Definition~\ref{defn::good_function}(a).}
Let $q\in \Q_+$ and define the random time $T_q = \inf\{t\geq q: \iS(t) = \inf_{u\leq q} \iS(u)\}$. 
We will show that, almost surely,
\begin{eq}\label{eq:non-perfect-levy}
\inf \{t>0: \iS(T_q+t) - \iS(T_q)<0\}=0, \quad \text{ on }\{T_q<\infty\}, \text{ for all } q\in \Q_+.
\end{eq}
Note that if $q$ lies in some finite-length excursion then $T_q<\infty$, and also $T_q$ is the end-point of that excursion. Now, \eqref{eq:non-perfect-levy} ensures that $T_q$ is not a local minimum because we can find $u$ arbitrarily close to $T_q$ such that $\iS (u) < \iS(T_q)$. Hence,
Definition~\ref{defn::good_function}(a) holds for $\biS$ almost surely.

Thus it suffices to prove \eqref{eq:non-perfect-levy}. 
Since $\Q_+$ is countable, it is enough to prove   \eqref{eq:non-perfect-levy} for each fixed $q\in \Q_+$.
Let $V_q = \{i: \cI_i (T_q) = 1\}$.
Note that $T_q$ is a stopping time. Moreover, conditionally on the sigma-field $\mathscr{F}_{T_q}$, the process $(\iS(T_q+t) - \iS(T_q))_{t\geq 0}$ is distributed as $\hat{\mathbf{S}}_{\infty}^\lambda$ given by  
\begin{eq}\label{S-hat-defn}
\hat{S}_{\infty}^\lambda(t) =  \lambda\sum_{i\notin V_q} \theta_i\mathcal{I}_i(t)-  t. 
\end{eq}
Define
$L(t) = \lambda\sum_{i=1}^{\infty} \theta_i \mathcal{N}_i(t) -t,$
where $(\mathcal{N}_i(t))_{t\geq 0}$ is a rate-$\theta_i$ Poisson process, independently for different $i$. 
We assume that $\hat{\mathbf{S}}_{\infty}^\lambda$ and $\bld{L}$ are coupled by taking $\cI_i(s) = 
\mathbbm{1}\{\mathcal{N}_i(s)\geq 1\}$, so that $\hat{S}_{\infty}^{\lambda}(t)\leq L(t)$ for all $t\geq 0$ almost surely. 
Thus, if $R_0 = \inf\{t> 0: L(t)<0\}$, then
it suffices to show that
\begin{equation}\label{defn:perfect-levy-1}
  \PR(R_0 =0) = 1, 
 \end{equation}and \eqref{eq:non-perfect-levy} follows.
 Fix $\varepsilon>0$ and $K\geq 1$. 
Then, 
\begin{eq}\label{computation-levy-regular}
\PR(R_0 \leq \varepsilon)  &\geq  \PR(L(\varepsilon)<0 ) \\
&\geq \PR\bigg(\lambda\sum_{i=K+1}^{\infty} \theta_i \mathcal{N}_i(\varepsilon) < \varepsilon, \text{ and } \mathcal{N}_i(\varepsilon) = 0, \ \forall i\in [K] \bigg)\\
& =\prod_{i=1}^K\PR(\mathcal{N}_i(\varepsilon) = 0) \times \PR\bigg(\lambda\sum_{i=K+1}^{\infty} \theta_i \mathcal{N}_i(\varepsilon) < \varepsilon \bigg) 
\\
& =\e^{-\varepsilon \sum_{i = 1}^K\theta_i } \bigg(1- \PR\bigg(\lambda\sum_{i=K+1}^{\infty} \theta_i \mathcal{N}_i(\varepsilon) \geq  \varepsilon \bigg) \bigg)
\\
&\geq  \e^{-\varepsilon \sum_{i = 1}^K\theta_i }\bigg(1-\frac{\lambda}{\varepsilon} \E\bigg[\sum_{i=K+1}^{\infty} \theta_i \mathcal{N}_i(\varepsilon)\bigg]\bigg) =  \e^{-\varepsilon \sum_{i = 1}^K\theta_i }\bigg(1-\lambda \sum_{i=K+1}^{\infty} \theta_i^2 \bigg),
\end{eq}where the one-but-last step follows from Markov's inequality. 
Thus, using the fact that $\{R_0 \leq \varepsilon\}\searrow \{R_0 = 0\}$, as $\varepsilon\searrow 0$, 
\begin{eq}
\PR(R_0 =0) = \lim_{\varepsilon\searrow 0}\PR(R_0 \leq \varepsilon) \geq 1-\lambda \sum_{i=K+1}^{\infty} \theta_i^2,
\end{eq}and since the above holds for any $K\geq 1$, and $\sum_{i}\theta_i^2<\infty$, we have proved \eqref{defn:perfect-levy-1}.
\paragraph*{Verification of Definition~\ref{defn::good_function}(b).}
Next, we verify that Definition~\ref{defn::good_function}(b) holds almost surely for~$\biS$. 
Since $\Q_+$ is countable, we may again work with fixed $q_1,q_2\in\Q_+$, i.e., it suffices to prove that $(q_1,q_2) \not\subset \{t: \iS(t) = \inf_{u\leq t} \iS(u)\}$ almost surely. 
By the description of our thinned L\'evy process, it has positive jumps only, and if there is a jump of size $\theta_i$ at time $t$, then $\iS(t+\theta_i/2) > \inf_{u\leq t} \iS(u)= \inf_{u\leq t+\theta_i/2} \iS(u)$. Therefore, if $(q_1,q_2) \subset \{t: \iS(t) = \inf_{u\leq t} \iS(u)\}$, then there is no $\xi_i$ such that $\xi_i\in(q_1,q_2)$.
We compute  
\begin{eq}
\PR(\forall i\geq 1: \xi_i \notin (q_1,q_2)) &= \prod_{i=1}^\infty \PR(\xi_i \notin (q_1,q_2))=\prod_{i=1}^\infty (1- \e^{-\theta_i q_1} + \e^{-\theta_i q_2}) \\
& = \exp\bigg(\sum_{i=1}^\infty \log \Big(1- \e^{-\theta_i q_1} (1- \e^{-\theta_i (q_2-q_1)}\Big) \bigg) \\
&\leq \exp\bigg( - \e^{-\theta_1q_1}\sum_{i=1}^\infty(1- \e^{-\theta_i (q_2-q_1)}) \bigg) = 0,
\end{eq}
where the one-but-last step follows using $\log (1-x) \leq -x$ for all $x\in (0,1)$ and $\e^{-\theta_iq_1}\geq \e^{-\theta_1q_1}$ for all $i\geq 1$, and the last step uses the fact that $\sum_{i=1}^\infty(1- \e^{-\theta_i (q_2-q_1)}) = \infty$, which follows by applying the limit comparison test together with $(1- \e^{-\theta_i (q_2-q_1)})/\theta_i \to q_2-q_1$ as $i\to\infty$, and $\sum_{i=1}^\infty \theta_i = \infty$.
Thus we have verified that Definition~\ref{defn::good_function}(b) holds almost surely for $\biS$.

\paragraph*{Verification of Definition~\ref{defn::good_function}(c).}
Similarly as above, for any $q\in\Q_+$, define the stopping time $T_q(\varepsilon) = \inf\{t\geq q: \iS(t) \leq  \inf_{u\leq q} \iS(u)+ \varepsilon\}$.
Let $\mathcal{C}_q$ denote the event that $q$ lies in some finite-length excursion.
Observe that $\mathcal{C}_q$ implies $T_q(\varepsilon)<\infty$.
We claim that it is sufficient to prove
\begin{eq}\label{no-approximate-end}
\lim_{\varepsilon\searrow 0} \PR(\biS \text{ has an excursion end-point in }(T_q(\varepsilon),T_q(\varepsilon)+2\varepsilon), \text{ and }\mathcal{C}_q \text{ occurs}) =1. 
\end{eq}
Let $T_q^-:=\inf\{t> q: \iS(t-) =  \inf_{u\leq q} \iS(u)\}$. 
Indeed, if $\mathcal{C}_q$ occurs, then $T_q(\varepsilon) \nearrow T_q^-$ as $\varepsilon \searrow 0$, and  \eqref{no-approximate-end} shows that $T_q^{-}$ must be an excursion end-point with probability 1. 
Thus, none of the excursions of $\biS$ contain a point $t$ such that $\iS(t-) = \inf_{u\leq q} \iS(u) = \inf_{u\leq t} \iS(u)$, where we have used the fact that $\inf_{u\leq t} \iS(u)$ is constant on an excursion interval. 
This completes the verification of Definition~\ref{defn::good_function}(c). 

It remains to prove \eqref{no-approximate-end}. As before, let $L(t) = \lambda\sum_{i=1}^{\infty} \theta_i \mathcal{N}_i(t) -t$, and let us also work under the coupling under which $\iS (T_q(\varepsilon) +t) - \iS (T_q(\varepsilon)) \leq L(t)$ for all $t\geq 0$ almost surely. 
On the event $\mathcal{C}_q$, we have $\iS(T_q(\varepsilon)) \leq \inf_{u\leq q}\iS(u) + \varepsilon$, since the process has only positive jumps. 
Also, on $\mathcal{C}_q$, if $L(2\varepsilon) < \varepsilon$, then $\iS (T_q(\varepsilon) +2\varepsilon) - \iS (T_q(\varepsilon)) < \varepsilon$, and consequently the event in \eqref{no-approximate-end} holds. 
Thus, using identical computations as \eqref{computation-levy-regular}, it follows that 
\begin{eq}
&\PR(\biS \text{ has an excursion end-point in }(T_q(\varepsilon),T_q(\varepsilon)+2\varepsilon), \text{ and } \mathcal{C}_q\text{ occurs }) \\
&\geq \PR(L(2\varepsilon) < \varepsilon) \geq \e^{-2\varepsilon \sum_{i = 1}^K\theta_i }\bigg(1-\frac{3\lambda}{2} \sum_{i=K+1}^{\infty} \theta_i^2 \bigg),
\end{eq}
and \eqref{no-approximate-end} follows by taking the iterated limit  $\lim_{K\to\infty}\lim_{\varepsilon\to 0}$, and using $\sum_i\theta_i^2<\infty$.

\paragraph*{Verification of Definition~\ref{defn::good_function}(d).}
We start by providing the martingale decomposition for $\biS$:
\begin{lemma}\label{lem:mart-decomp-Sinfty} The process $\biS$ admits the Doob-Meyer decomposition $\iS(t) = M(t)+A(t)$ with the drift term $A(t)$ and the quadratic variation for the martingale term $\langle M\rangle (t)$  given by 
\begin{equation}
A(t) = \lambda\sum_{i=1}^\infty \theta_i^2\min\{\xi_i,t\}-t, \qquad \langle M\rangle (t) = \lambda^2\sum_{i=1}^\infty \theta_i^3\min\{\xi_i,t\}.
\end{equation}
\end{lemma}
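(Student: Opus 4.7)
\emph{Proof proposal.} The plan is to decompose each single-jump indicator $\mathcal{I}_i$ against its own compensator, sum the resulting martingales, and then identify the predictable quadratic variation by exploiting orthogonality.

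First I would observe that, conditionally on $\mathscr{F}_t$, the process $(\mathcal{I}_i(s))_{s\geq 0}$ is a counting process with a single jump of size $1$ at $\xi_i\sim\mathrm{Exp}(\theta_i)$ (using the normalization $\mu=1$ of this section). Its compensator is
\begin{equation}
\int_0^t \theta_i\,\mathbbm{1}\{\xi_i>s\}\,\dif s=\theta_i\min\{\xi_i,t\},
\end{equation}
so that $M_i(t):=\mathcal{I}_i(t)-\theta_i\min\{\xi_i,t\}$ is an $(\mathscr{F}_t)$-martingale. Since $M_i$ has unit jumps at $\xi_i$ only, its optional quadratic variation is $[M_i,M_i](t)=\mathcal{I}_i(t)$, whose compensator is again $\theta_i\min\{\xi_i,t\}$; hence $\langle M_i\rangle(t)=\theta_i\min\{\xi_i,t\}$. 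A key observation is that, for $i\ne j$, the times $\xi_i$ and $\xi_j$ are distinct almost surely, so $M_i$ and $M_j$ never jump simultaneously and are therefore strongly orthogonal, giving $\langle M_i,M_j\rangle\equiv 0$.

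Substituting $\mathcal{I}_i(t)=M_i(t)+\theta_i\min\{\xi_i,t\}$ into \eqref{defn::limiting::process} (with $\mu=1$) formally yields
\begin{equation}
\iS(t)=\lambda\sum_{i=1}^{\infty}\theta_i M_i(t)+\Big(\lambda\sum_{i=1}^{\infty}\theta_i^2\min\{\xi_i,t\}-t\Big),
\end{equation}
which is the candidate decomposition with $A(t)$ as stated. The drift term is well defined and absolutely convergent because $\min\{\xi_i,t\}\leq t$ and $\sum_i\theta_i^2<\infty$; it is continuous in $t$ and adapted, hence predictable. So the remaining task is to show that $M(t):=\lambda\sum_{i=1}^\infty\theta_iM_i(t)$ is a genuine square-integrable martingale and to identify its predictable bracket.

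For the martingale property I would work with the partial sums $M^{(N)}(t):=\lambda\sum_{i=1}^{N}\theta_iM_i(t)$, each of which is trivially a martingale with $\langle M^{(N)}\rangle(t)=\lambda^2\sum_{i=1}^{N}\theta_i^3\min\{\xi_i,t\}$ by orthogonality. Taking expectations and using $\E[\min\{\xi_i,t\}]=(1-\e^{-\theta_i t})/\theta_i$ gives
\begin{equation}
\E\big[(M^{(N')}(t)-M^{(N)}(t))^2\big]=\lambda^2\sum_{i=N+1}^{N'}\theta_i^2\,(1-\e^{-\theta_i t})\leq\lambda^2\sum_{i=N+1}^{N'}\theta_i^2\longrightarrow 0,
\end{equation}
so the sequence is Cauchy in $L^2$ for every fixed $t$. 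Hence $M^{(N)}(t)\to M(t)$ in $L^2$, and since the $L^2$-limit of martingales (with respect to the same filtration) is a martingale, $M$ is a square-integrable $(\mathscr{F}_t)$-martingale. The same orthogonality-plus-monotone-convergence argument gives $\langle M\rangle(t)=\lim_{N}\langle M^{(N)}\rangle(t)=\lambda^2\sum_i\theta_i^3\min\{\xi_i,t\}$, verifying the stated form.

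The main technical subtlety I anticipate is the justification of the infinite-sum manipulations, namely showing that $M^{(N)}\to M$ not only pointwise in $t$ but as processes so that the predictable quadratic variation of the limit coincides with the monotone limit of $\langle M^{(N)}\rangle$. This can be handled by Doob's $L^2$-inequality applied to $M^{(N')}-M^{(N)}$, which yields uniform convergence of $M^{(N)}$ on compact time intervals in $L^2$, and then the uniqueness of the Doob--Meyer decomposition together with the monotonicity of $\langle M^{(N)}\rangle(t)$ in $N$ (by orthogonality) identifies the compensator as the claimed sum.
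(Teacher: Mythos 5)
Your proposal is correct and follows essentially the same route as the paper: decompose each indicator $\mathcal{I}_i$ against its compensator $\theta_i\min\{\xi_i,t\}$, note that $M_i(t)=\mathcal{I}_i(t)-\theta_i\min\{\xi_i,t\}$ is a martingale with $\langle M_i\rangle(t)=\theta_i\min\{\xi_i,t\}$, and sum. You additionally make explicit the strong orthogonality of the $M_i$ and the $L^2$ justification of the infinite sum, details the paper leaves implicit, so your write-up is if anything more complete.
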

\begin{proof}
Define $M_i(t) = \ind{\xi_i\leq t}-\theta_i\min\{\xi_i,t\}$. Then 
 \begin{eq}\label{eq:martingale-statement}
 (M_i(t))_{t\geq 0} \quad \text{is a martingale.}
 \end{eq}
Indeed, note that $M_{i}(t+s) - M_i(t) = 0$ if $\xi_i \leq t$. 
Thus, 
\begin{eq}\label{eq:martingale-comp}
\E[M_i(t+s) - M_i(t) \mid \mathscr{F}_t] &= \E[\ind{t<\xi_i\leq t+s}-\theta_i(\min\{\xi_i,t+s\} - \min\{\xi_i,t\}) \mid \xi_i>t] \\
&=\E[\ind{t<\xi_i\leq t+s}-\theta_i \min\{\xi_i-t,s\} \mid \xi_i>t] \\
& = \PR(0<\xi_i \leq s) - \theta_i\E[\min\{\xi_i,s\}],
\end{eq}
where the last step follows from the memoryless property of the exponential distributions. Now, using the fact that $\int x\e^{-ax} \dif x= -\e^{-ax}(ax+1)/a^2$, one can verify that $\theta_i\E[\min\{\xi_i,s\}] = 1- \e^{-\theta_i s}$.
Applying this to \eqref{eq:martingale-comp}, we can conclude that $\E[M_i(t+s) - M_i(t) \vert \mathscr{F}_t] =0$, thus verifying~\eqref{eq:martingale-statement}.
Moreover, the quadratic variation of $(M_i(t))_{t\geq 0}$  is given by
\begin{equation} \label{eq:QV-statement}
\langle M_i\rangle(t) = \theta_i\min\{\xi_i,t\}.
\end{equation}
This follows from the characterization of unit-jump  processes given in \cite[Lemma 3.1]{PTW07}, together with the fact that $\theta_i\min\{\xi_i,t\}$, the compensator of $\ind{\xi_i\leq t}$, is continuous in $t$.
Then \eqref{eq:martingale-statement} and \eqref{eq:QV-statement} completes the proof of Lemma~\ref{lem:mart-decomp-Sinfty}.
\end{proof} 
We are now ready to verify  Definition~\ref{defn::good_function}(d). 
In order to prove that $\biS$ does not have an  excursion of infinite length almost surely, it suffices to show that 
\begin{eq}\label{s-t-large-infty}
\lim_{t\to\infty}\iS(t) = -\infty \quad \text{almost surely. }
\end{eq}
Fix $K\geq 1$ such that $\lambda \sum_{i>K} \theta_i^2 <1/2$.
Such a choice of $K$ is always possible as $\bld{\theta}\in \ell^2_{\shortarrow}$.
Further define the stopping time $T:= \inf\{t: \xi_i\leq t,\ \forall i\in [K]\} = \max_{i\leq K} \xi_i$.
Thus, $T<\infty$ almost surely.
Note that $\min\{\xi_i,t\} \leq t$ and thus, 
\begin{eq}
\frac{1}{t}\lambda\sum_{i>K}\theta_i^2 \min\{\xi_i,t\} < \frac{1}{2}, \quad \text{almost surely.}
\end{eq}
Therefore, for any $t>T$,
\begin{eq}
A(t) = \lambda \sum_{i\in [K]} \theta_i^2 \xi_i  + \lambda\sum_{i>K}\theta_i^2 \min\{\xi_i,t\} -t < \lambda \sum_{i\in [K]} \theta_i^2 \xi_i -\frac{t}{2}, \quad \text{almost surely.}
\end{eq}
We conclude that, for any $r\in (0,1)$,
$t^{-r}A(t) \asto -\infty.$
For the martingale part we will use the exponential concentration inequality \cite[Inequality 1, Page 899]{SW86}, which is stated below:
\begin{lemma}\label{lem:concentration-inequality}
If $M$ is any continuous time local martingale such that $M(0) = 0$, and $\sup_{t\in [0,\infty)} |M(t) - M(t-)| \leq c$, almost surely, then for any $t>0$, $a>0$ and $b>0$, 
\begin{eq}
\PR\Big(\sup_{s\in [0,t]} M(s) > a, \text{ and } \langle M \rangle (t) \leq b  \Big) \leq \exp \bigg(-\frac{a^2}{2b} \psi \Big( \frac{a c}{b}\Big) \bigg), 
\end{eq}where $\psi(x) = ( (1+x)\log(1+x) - x ) / x^2$. 
\end{lemma}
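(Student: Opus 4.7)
The plan is to prove Lemma~\ref{lem:concentration-inequality} via the classical Freedman-type exponential supermartingale argument that underlies all Bennett-style concentration bounds for martingales with bounded jumps. The strategy has three steps: construct an exponential supermartingale, apply Doob's maximal inequality after truncating in the quadratic variation, and optimize over the auxiliary parameter.

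First I would introduce, for each $\lambda > 0$, the nonnegative process
\begin{equation}
Y_\lambda(t) := \exp\Big(\lambda M(t) - \tfrac{\e^{\lambda c} - 1 - \lambda c}{c^2}\,\langle M\rangle(t)\Big),
\end{equation}
and show that $(Y_\lambda(t))_{t\geq 0}$ is a local supermartingale with $Y_\lambda(0)=1$. The key ingredient is the elementary inequality $\e^{\lambda x} - 1 - \lambda x \leq (x/c)^2(\e^{\lambda c} - 1 - \lambda c)$ valid for $x \leq c$, applied at each jump of $M$ (whose sizes are bounded by $c$ by hypothesis), together with the Dol\'eans-Dade formula for purely discontinuous local martingales. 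The bound on the exponential compensator then yields the supermartingale property for $Y_\lambda$.

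Second, to handle the constraint $\{\langle M\rangle(t) \leq b\}$ without any global control on $\langle M\rangle$, I would introduce the stopping time $\sigma_b := \inf\{s : \langle M\rangle(s) > b\}$ and note that $\sigma_b \geq t$ on this event. On $\{\sup_{s\leq t} M(s) > a,\ \langle M\rangle(t)\leq b\}$ the stopped process satisfies $\sup_{s\leq t\wedge \sigma_b} Y_\lambda(s) \geq \exp\big(\lambda a - c^{-2}(\e^{\lambda c} - 1 - \lambda c)\,b\big)$, so Doob's maximal inequality applied to the nonnegative supermartingale $Y_\lambda^{\sigma_b}$ gives
\begin{equation}
\PR\Big(\sup_{s\leq t} M(s) > a,\ \langle M\rangle(t)\leq b\Big) \leq \exp\Big(-\lambda a + \tfrac{\e^{\lambda c} - 1 - \lambda c}{c^2}\,b\Big).
\end{equation}
Finally I would minimize the right-hand side over $\lambda>0$; the optimizer is $\lambda = c^{-1}\log(1 + ac/b)$, and substituting together with the identity $(1+u)\log(1+u) - u = u^2\psi(u)$ collapses the exponent to the advertised multiple of $-(a^2/b)\psi(ac/b)$.

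The main technical obstacle is not the variational step, which is routine calculus, but the rigorous verification of the supermartingale property for a general continuous-time local martingale with bounded jumps. This requires a localization argument (choosing a sequence $(\tau_n)$ that reduces $M$ to the uniformly integrable case), a careful application of the purely discontinuous Dol\'eans-Dade calculus, and a monotone/dominated convergence passage to the limit to recover the statement for the original $M$. Once the supermartingale is in place, the remainder of the proof is algebraic.
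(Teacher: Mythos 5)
Your proof is correct, but it is worth noting that the paper does not prove this lemma at all: it is imported verbatim from Shorack and Wellner \cite[Inequality 1, p.~899]{SW86}, so what you have reconstructed is the standard exponential-supermartingale (Freedman/Bennett) proof of that cited inequality rather than an alternative to anything in the paper. Your outline is the right one: $Y_\lambda$ is indeed a nonnegative local supermartingale (hence a true supermartingale by Fatou, so Doob's maximal inequality applies without further localization), and the optimization over $\lambda$ is routine. Two small points. First, your appeal to the "purely discontinuous" Dol\'eans--Dade calculus glosses over the continuous martingale part of $M$, whose exponential compensator is $\tfrac{\lambda^2}{2}\langle M^c\rangle$; this is absorbed into your $Y_\lambda$ because $\lambda^2/2 \leq c^{-2}(\e^{\lambda c}-1-\lambda c)$, which is the $x=0$ versus $x=c$ case of the same monotonicity inequality you use for the jumps. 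Second, your optimized exponent is $-(a^2/b)\,\psi(ac/b)$ with the paper's normalization $\psi(x)=((1+x)\log(1+x)-x)/x^2$, which is \emph{stronger} than the stated $-(a^2/2b)\,\psi(ac/b)$; the discrepancy arises because Shorack--Wellner define $\psi$ with an extra factor $2$ that the paper's restatement drops. Since your bound implies the stated one, this is harmless, but it confirms that the lemma as printed is a weakened form of what your argument (and the original reference) actually delivers.
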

\noindent In particular, $\psi(x) \geq 1/(2 (1+x/3))$ (see \cite[Page 27]{JLR00}).
Note that $ \langle M\rangle(t)\leq \lambda^2 t \sum_{i=1}^\infty \theta_i^3.$
We apply Lemma~\ref{lem:concentration-inequality} with $a = \varepsilon t^{r}$, $b = \lambda^2 t \sum_{i=1}^\infty \theta_i^3$, and $c= \theta_1$. 
Using Lemma~\ref{lem:mart-decomp-Sinfty}, $\langle M\rangle (t) \leq b$ almost surely.
Now, $\psi (ac/b) \geq C/(1+t^{r-1})$, and thus for any $\varepsilon >0$, and $r\in (1/2,1)$
\begin{eq}
\PR\Big(\sup_{s\in [0,t]} |M(s)| > \varepsilon t^{r}  \Big) \leq 2 \exp (-C t^{2r-1}), 
\end{eq}for some constant $C>0$, where the bound on the absolute value of $M$ follows from the fact that $-M$ is also a martingale, so Lemma~\ref{lem:concentration-inequality} applies to $-M$ as well. 
Now an application of the Borel-Cantelli lemma proves that $ t^{-r}| M(t) |\asto 0,$ for any $r\in (1/2,1)$.
This fact, together with the asymptotics of the drift term, completes the proof of \eqref{s-t-large-infty}. \qed

\paragraph*{Verification of Definition~\ref{defn::good_function}(e).}
Fix $\delta >0$.
 Let $t_k = (k-1)\delta/2$ and define the event
 \begin{equation}
  \rC_k^\delta:=\bigg\{\sup_{t\in (t_{k-1},t_k]}\iS(t_{k+1}) - \iS(t)>0\bigg\}.
 \end{equation}
Suppose that there is an excursion $(l,r)$ with $r-l > \delta$ and $l\in (t_{k-1},t_k]$ for some $k$. 
Since $r> t_{k+1}$ and $l\in (t_{k-1},t_k]$, we have that  $\inf_{u\leq t_{k+1}}\iS(u) = \inf_{u\in (t_{k-1},t_k]}\iS(u) $.
Consequently, 
$\iS(t_{k+1})> \inf_{t\in (t_{k-1},t_k]} \iS(t)$, and therefore $\rC_k^\delta$ must occur.
Therefore, if $\biS$ has infinitely many excursions of length at least $\delta$, then $\rC_k^\delta$ must occur infinitely often.
Using the Borel-Cantelli lemma, the proof follows if we can show that
\begin{equation}
\sum_{k=1}^\infty\PR(\rC_k^\delta) <\infty.
\end{equation}
As before, fix $K\geq 1$ such that $\lambda \sum_{i>K} \theta_i^2 <1/2$, and let $T:= \inf\{t: \xi_i\leq t,\ \forall i\in [K]\} = \max_{i\leq k} \xi_i$.
Notice that for each $K\geq 1$,
\begin{eq}
\sum_{k=1}^\infty \prob{T>t_{k-1}} &= \sum_{k=1}^\infty \prob{\exists i\in [K]: \xi_i>t_{k-1}}\leq \sum_{k=1}^\infty K \e^{-\theta_K(k-1)\delta/2}<\infty,
\end{eq} and therefore it is enough to show that 
\begin{equation}\label{suff-finite-exc}
\sum_{k=1}^\infty\PR(\rC_k^\delta\cap \{T\leq t_{k-1}\}) <\infty.
\end{equation}
Now,
\begin{eq}
&\sup_{t\in [t_{k-1},t_k]}\big[\iS(t_{k+1})-\iS(t)\big] \leq M(t_{k+1})+\sup_{t\in [t_{k-1},t_k]} - M(t) + \sup_{t\in [t_{k-1},t_k]} [A(t_{k+1}) - A(t)]\\
& \hspace{1cm}\leq M(t_{k+1}) - M(t_{k-1})+ \sup_{t\in [t_{k-1},t_k]} [M(t_{k-1})- M(t)] \\
&\hspace{2cm}+ \sup_{t\in [t_{k-1},t_k]}\bigg[\lambda\sum_{i=1}^\infty \theta_i^2 (\min\{\xi_i,t_{k+1}\}-\min\{\xi_i,t\}) -(t_{k+1}-t)\bigg]\\
& \hspace{1cm}\leq 2\sup_{t\in [t_{k-1},t_{k+1}]}  |M(t)-M(t_{k-1})| \\
& \hspace{2cm}+ \sup_{t\in [t_{k-1},t_k]}\bigg[\lambda\sum_{i=1}^\infty \theta_i^2 (\min\{\xi_i,t_{k+1}\}-\min\{\xi_i,t\}) -(t_{k+1}-t)\bigg].
\end{eq}
On the event $\{T\leq t_{k-1}\}$, the second term inside the supremum above reduces to 
\begin{eq}\label{drift-bound}
\lambda\sum_{i>K} \theta_i^2 (\min\{\xi_i,t_{k+1}\}-\min\{\xi_i,t\}) -(t_{k+1}-t) \leq (t_{k+1}-t)\lambda\sum_{i>K} \theta_i^2 - (t_{k+1}-t)< -\frac{\delta}{2},
\end{eq}using $\lambda \sum_{i>K} \theta_i^2 <1/2$.
Thus we only need to estimate
\begin{equation}
\PR\bigg(\sup_{t\in [t_{k-1},t_{k+1}]}  |M(t)-M(t_{k-1})| >\frac{\delta}{4}\bigg).
\end{equation}
Note that $(M(t)-M(t_{k-1}))_{t\geq t_{k-1}}$ is a martingale with respect to the filtration $(\mathscr{F}_t)_{t\geq t_{k-1}}$ 
starting from zero. Moreover, using an identical argument as Lemma~\ref{lem:mart-decomp-Sinfty} yields that the quadratic variation of $(M(t)-M(t_{k-1}))_{t\geq t_{k-1}}$ is given by 
\begin{equation}
\lambda^2\sum_{i=1}^\infty \theta_i^3\big(\min\{\xi_i,t\}-\min\{\xi_i,t_{k-1}\}\big).
\end{equation}
Further, $\E[\min\{\xi_i,t\}] = \theta_i^{-1}(1-\e^{-\theta_i t})$. 
Therefore, Doob's martingale inequality \cite[Theorem~1.9.1.3]{LS89} implies
\begin{eq}
&\sum_{k=1}^\infty\PR\bigg(\sup_{t\in [t_{k-1},t_{k+1}]}  |M(t)-M(t_{k-1})| >\frac{\delta}{4}\bigg)\\
&\leq \sum_{k=1}^\infty\frac{16 \lambda^2}{\delta^2}\sum_{i=1}^\infty \theta_i^2 (\e^{-\theta_i t_{k-1}}-\e^{-\theta_i t_{k+1}})
= \frac{16\lambda^2}{\delta^2} \sum_{i=1}^\infty \theta_i^2(1-\e^{-\theta_i\delta}) \sum_{k=1}^\infty \e^{-\theta_i t_{k-1}}<\infty,
\end{eq}
and the proof of \eqref{suff-finite-exc} now follows using \eqref{drift-bound}.

\paragraph*{Verification of Definition~\ref{defn::good_function}(f).}
We first prove the following: 
\begin{lemma}\label{lem:exc} 
The distribution of $\iS(t)$ has no atoms for all $t>0$.
\end{lemma}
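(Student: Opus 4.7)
The natural route is Fourier analytic: a probability measure on $\R$ has no atoms if and only if its characteristic function $\phi$ satisfies $\lim_{T\to\infty}(2T)^{-1}\int_{-T}^{T}|\phi(u)|^2\, \dif u=0$, and a sufficient condition is $\phi\in L^2(\R)$. We will show the latter using assumption~\eqref{density-assumption}, working under the normalisation $\mu=1$ and $\|\bld{\theta}\|_2^2=1$ as in Section~\ref{sec:limit-good}.

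First I would compute the characteristic function of $\iS(t)$. Since $\cI_i(t)\sim\mathrm{Ber}(p_i)$ independently with $p_i=1-\e^{-\theta_i t}$,
\begin{equation*}
\phi(u)\;=\;\E[\e^{\im u \iS(t)}]\;=\;\e^{-\im u t}\prod_{i=1}^{\infty}\bigl[1+p_i(\e^{\im u \lambda \theta_i}-1)\bigr].
\end{equation*}
A direct expansion then gives the real-valued product
\begin{equation*}
|\phi(u)|^2\;=\;\prod_{i=1}^{\infty}\bigl[1-2p_i(1-p_i)\bigl(1-\cos(u\lambda\theta_i)\bigr)\bigr].
\end{equation*}
The convergence of this product follows because for any fixed $u$, $1-\cos(u\lambda\theta_i)=O(\theta_i^2)$ and $p_i(1-p_i)\leq 1/4$, and $\sum_i \theta_i^2<\infty$.

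The main analytic step is to extract a Gaussian-type decay from this product. Using $1-x\leq \e^{-x}$ for $x\in[0,1]$,
\begin{equation*}
|\phi(u)|^2 \;\leq\; \exp\Bigl(-2\sum_{i=1}^{\infty} p_i(1-p_i)\bigl(1-\cos(u\lambda\theta_i)\bigr)\Bigr).
\end{equation*}
I would then restrict the sum to the indices $i$ with $t\theta_i\leq 1$ and $u\lambda\theta_i\leq 1$. On this range, the elementary bounds $1-\e^{-x}\geq x/2$, $\e^{-x}\geq \e^{-1}$ and $1-\cos y\geq y^2/5$ (all valid on $[0,1]$) give
\begin{equation*}
p_i(1-p_i)\bigl(1-\cos(u\lambda\theta_i)\bigr)\;\geq\; c\,t\theta_i\cdot u^2\lambda^2\theta_i^2,
\end{equation*}
for an absolute constant $c>0$. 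Summing over the restricted range yields
\begin{equation*}
|\phi(u)|^2\;\leq\;\exp\bigl(-c\,t u^2\lambda^2 M_t(u\lambda)\bigr).
\end{equation*}

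Finally, substituting $v=u\lambda$ in $\int_{0}^{\infty}|\phi(u)|^2\,\dif u$ reduces the problem to showing $\int_{0}^{\infty}\exp(-c\, t v^2 M_t(v))\,\dif v<\infty$, which is exactly what assumption~\eqref{density-assumption} provides (up to the harmless absolute constant $c$, which can be absorbed in the constants of the assumption, or handled by sharpening the elementary inequalities). Hence $\phi\in L^2(\R)$, so $\iS(t)$ admits a density in $L^2$ by Plancherel's identity, and in particular has no atoms.

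The only genuinely delicate point is the matching of constants between the bound $\exp(-c t v^2 M_t(v))$ and the form assumed in \eqref{density-assumption}; this is not a real obstacle because the assumption is a qualitative statement on the growth of $v^2 M_t(v)$ and can be applied with any positive multiplicative constant after rescaling $v$.
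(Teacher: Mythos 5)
Your proposal is correct and follows essentially the same route as the paper: the identical product formula for $|\phi(u)|^2$, the bound $1-x\leq \e^{-x}$, restriction to indices with $t\theta_j\leq 1$ and $|u|\lambda\theta_j\leq 1$, and the elementary lower bounds extracting $u^2\lambda^2 M_t(u\lambda)$, concluding via \eqref{density-assumption}. The only cosmetic difference is the criterion invoked at the end ($\phi\in L^2$ plus Plancherel/Wiener, versus the paper's $\phi\in L^1$ giving a bounded continuous density), and the constant-matching issue you flag is equally present, and equally harmless, in the paper's argument.
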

\begin{proof}
Let $\phi_t (v) = \E[\e^{\im v S(t)}]$ for $v\in\R$. Using the sufficient condition for random variables to have non-atomic distribution stated in \cite[Page 189]{GS01}, it suffices to prove that 
\begin{eq}
\int_{-\infty}^\infty |\phi_t (v)| \dif v<\infty. 
\end{eq}
Note that  
\begin{eq}
\phi_t(v) &= \e^{- \im v t}\prod_{j=1}^{\infty}  \E[\e^{\im v \lambda \theta_j\1\{\xi_j\leq t\}}] = \e^{- \im v  t}\prod_{j=1}^{\infty} (\e^{\im v \lambda \theta_j}(1-\e^{- \theta_jt}) + \e^{- \theta_jt})\\
 &=\e^{- \im v t}\prod_{j=1}^{\infty}\big( (1-\e^{-t\theta_j }) \cos (v \lambda \theta_j )  + \e^{-t\theta_j }+ \im (1-\e^{-t\theta_j})\sin(v\lambda \theta_j)\big).
\end{eq}
Therefore, 
\begin{eq}
|\phi_t (v)|^2 
&= \prod_{j=1}^\infty \Big(\big((1-\e^{-t\theta_j }) \cos (v \lambda \theta_j )  + \e^{-t\theta_j }\big)^2+ (1-\e^{-t\theta_j})^2 \sin^2(v\lambda \theta_j)\Big) \\
& = \prod_{j=1}^\infty \Big(\e^{-2t\theta_j} + 2\cos(v\lambda \theta_j) \e^{-t\theta_j}(1-\e^{-t\theta_j}) + (1-\e^{-t\theta_j})^2 \Big) \\ 
& = \prod_{j=1}^\infty \Big(1- 2 \e^{-t\theta_j}(1- \e^{-t\theta_j}) (1-\cos(v\lambda \theta_j))\Big)\\
&\leq \e^{-\sum_{j=1}^\infty 2 \e^{-t\theta_j}(1- \e^{-t\theta_j}) (1-\cos(v\lambda \theta_j))},
\end{eq}where in the last step we have used the fact that $1-x\leq \e^{-x}$ for all $x>0$.
Recall \eqref{density-assumption}.
Let $j_0(v,t)\geq 1$ be such that $ \max \{|v|\theta_j,t\theta_j \} \leq 1$ for all $j\geq j_0 (v,t)$.
Now, for $j\geq j_0(v,t)$, we have that $\e^{-t\theta_j} \geq \e^{-1}$, $(1-\e^{-t\theta_j}) \geq t\theta_j/2$ and $1-\cos(v\lambda \theta_j) \geq \frac{2\lambda^2}{\pi} v^2\theta_j^2$.
Thus, using \eqref{density-assumption}, 
\begin{eq}\label{eq:ub-char-funct}
\int_{-\infty}^\infty |\phi_t (v)| \dif v \leq \int_{-\infty}^\infty  \e^{-\frac{2\lambda^2t}{\e\pi} v^2M_t(v)} \dif v<\infty,
\end{eq}
and the proof now follows.
\end{proof}

In order to prove the strict ordering between excursion lengths, it is enough to show that no two excursions of $\biS$ have the same length almost surely. 
For any $q\in \Q_+$, let $e(q)$ be the excursion containing $q$. 
Thus it is enough to show that for any $q_1,q_2 \in \Q_+$,
\begin{eq}
\PR(e(q_1) \neq e(q_2), \text{ but }|e(q_1)| = |e(q_2)|) =0.
\end{eq}
Without loss of generality, let $q_1<q_2$. Thus, if $e(q_1)\neq e(q_2)$, then  $e(q_1) $ appears earlier than $e(q_2)$. 
Let $V_{q_2} = \{i: \mathcal{I}_i (q_2) = 1\}$. As before, conditionally on  $\mathscr{F}_{q_2}$, the process $(\iS(q_2+t) - \iS(q_2))_{t\geq 0}$ is distributed as $\hat{\mathbf{S}}_{\infty}^\lambda$ given by  
\begin{eq}\label{eq:conditional-process}
\hat{S}_{\infty}^\lambda(t) =  \sum_{i\notin V_{q_2}} \theta_i\left(\mathcal{I}_i(t)- (\theta_i/\mu)t\right)+\lambda t.
\end{eq}
Therefore, the process in \eqref{eq:conditional-process} again has the form \eqref{defn::limiting::process} (see \eqref{S-hat-defn}). 
Now, for any $x>0$, the probability that $|e(q_2)| = x$, conditionally on $\mathscr{F}_{q_2}$ and $|e(q_1)| = x$, is zero using Lemma~\ref{lem:exc} together with the fact that $|V_{q_2}^c| = \infty$. 
This concludes the verification of Definition~\ref{defn::good_function}(f).

\section{The critical window} \label{sec:proofs}
In this section, we prove our results related to critical percolation on $\CM$. 
In Section~\ref{sec:sandwich-CM}, we start by describing a way to approximate  percolation on a configuration model by a suitable alternative configuration model.
In Section~\ref{sec:CM-critical-window}, we analyze the latter graph. 
The first step is to set up an exploration process that approximately encodes the component sizes  in terms of excursion lengths above past minima. This exploration process is shown to converge to $\biS$ (Section~\ref{sec:expl-process}). 
We must also ensure that the exploration process does not have large excursions appearing beyond the time scale of the exploration process, which allows us to prove that the largest component sizes converge to largest excursion lengths of $\biS$ (Section~\ref{sec:large-early}). 
Next we analyze the surplus edges (Section~\ref{sec:surplus-fd}) and the proof of Theorem~\ref{thm:main} is completed in Section~\ref{sec:conv-surp-comp-size}. 
Finally, we analyze the diameter of the critical components in Section~\ref{sec:diameter} and complete the proof of Theorem~\ref{thm:diameter-large-comp}.

\subsection{Sandwiching the percolated configuration model}\label{sec:sandwich-CM}
Following the pioneering work of Aldous~\cite{A97}, the main tool to prove scaling limits of the component sizes is to set up an appropriate exploration process. 
The idea is to explore the graph sequentially, and the exploration process keeps track of some functional of vertices that have been discovered but their neighborhoods have not been explored. For percolation on the configuration model, this could be the number of unpaired half-edges of those vertices. 
Now, for random graphs with independent connection probabilities, the exploration process is usually Markovian, but not for the configuration model.
Indeed, one has to keep track of the degree-profile outside the explored graph in order to know the distribution of the degree of a newly discovered vertex. 
For $d$-regular graphs, Nachmias and Peres \cite{NP10b} used the above approach, but this becomes difficult in the unbounded degree case. 
In earlier papers with Sen \cite{DHLS15,DHLS16}, we have used a construction by Janson~\cite{J09} which says that the percolated configuration model can be viewed as a configuration model satisfying some criticality condition, so that it is enough to analyze the behavior of these critical configuration models. 
However, in the $\tau\in (2,3)$ regime, this construction does not work because it gives rise to $n - o(n)$ many degree-one vertices. 
As a remedy to this problem, we use a result of Fountoulakis~\cite{F07} to show that the critical configuration model can be sandwiched between two approximately equal configuration models,  as stated in Proposition~\ref{prop:coupling-lemma} below. 
We emphasize that Proposition~\ref{prop:coupling-lemma} holds for percolation on the configuration model without any specific assumption on the degree distribution, as long as $\ell_n p_n \gg \log(n)$, and this will be used in the proofs for the near-critical results as well.
We start by describing the approximating configuration model below:
\begin{algo}\label{algo-alt-cons-perc}
\begin{itemize}\normalfont
 \item[(S0)] Keep each half-edge with probability $p_n$, independently, and delete the half-edges otherwise. If the total number of retained half-edges is odd, then attach a \emph{dummy} half-edge to vertex 1.
 \item[(S1)] Perform a uniform perfect matching among the \emph{retained half-edges}, i.e., within the retained half-edges, pair unpaired half-edges sequentially with a uniformly chosen unpaired half-edge until all half-edges are paired. 
 The paired half-edges create edges in the graph, and we call the resulting graph $\mathcal{G}_n(p_n)$.
\end{itemize}
\end{algo}
\noindent The following proposition formally states that $\cG_n(p_n)$ approximates $\rCM_n(\bld{d},p_n)$:
\begin{proposition} \label{prop:coupling-lemma}
Let $p_n$ be such that $\ell_np_n \gg \log(n)$. 
There exists $(\varepsilon_n)_{n\geq 1} \subset (0,\infty)$ with $\varepsilon_n\to 0 $, and a coupling such that, with high probability,
\begin{equation}
\mathcal{G}_n(p_n(1-\varepsilon_n))\subset\mathrm{CM}_n(\bld{d},p_n)\subset\mathcal{G}_n(p_n(1+\varepsilon_n)).
\end{equation}
\end{proposition}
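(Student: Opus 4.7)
The plan is to use Fountoulakis's representation of $\mathrm{CM}_n(\bld{d}, p_n)$ from \cite{F07}: independently label each of the $\ell_n$ half-edges ``alive'' with probability $\sqrt{p_n}$, perform a uniform matching $M$ on all $\ell_n$ half-edges, and retain only those edges of $M$ both of whose endpoints are alive. Since each edge of the underlying configuration model contains two independent marks, it survives with probability exactly $p_n$ independently of the others, giving the correct distribution for $\mathrm{CM}_n(\bld{d}, p_n)$. Writing $R^\star$ for the set of ``doubly alive'' half-edges (alive half-edges whose $M$-partner is also alive), exchangeability of the uniform matching $M$ gives that, conditionally on $R^\star$, the restriction $M|_{R^\star}$ is uniform over matchings of $R^\star$. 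Thus $\mathrm{CM}_n(\bld{d}, p_n)$ can be viewed as a uniform matching on the random set $R^\star$, while $\mathcal{G}_n(q)$ is by construction a uniform matching on an iid $\mathrm{Ber}(q)$ retained set $R(q)$. The proposition will then follow once we couple these two random sets and lift the set comparison to a graph comparison.

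The first step is a per-vertex concentration estimate. For every vertex $v\in [n]$, write $H_v$ for the set of half-edges at $v$. On the Algorithm side, $|R(q)\cap H_v|\sim \mathrm{Bin}(d_v,q)$ concentrates around $qd_v$ by Chernoff. On the Fountoulakis side, $|R^\star\cap H_v|$ has mean $p_n d_v$, and its $(1\pm o(1))$-concentration around this mean follows from Chernoff applied to the alive-labelling at $v$ (a $\mathrm{Bin}(d_v,\sqrt{p_n})$ random variable) combined with a conditional hypergeometric-type concentration, given the alive set, for the number of alive half-edges at $v$ whose $M$-partner is also alive. The hypothesis $\ell_n p_n\gg\log n$ gives enough slack for a union bound over $v\in[n]$ to produce some $\varepsilon_n\to 0$ for which
$|R(p_n(1-\varepsilon_n))\cap H_v| \leq |R^\star\cap H_v| \leq |R(p_n(1+\varepsilon_n))\cap H_v|$ for every $v\in[n]$, with high probability.

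The main obstacle is to lift this per-vertex sandwich of random retained sets to the claimed subgraph sandwich of the three uniform matchings, since uniform matchings on nested half-edge sets are \emph{not} automatically subgraph-nested: restricting a uniform matching on the larger set to the smaller set yields only a partial matching. The plan is to couple the three graphs on a single probability space by using Fountoulakis's matching $M$ as a common scaffold: realize the middle graph as $\mathrm{CM}_n(\bld{d}, p_n)=M|_{R^\star}$, and realize $\mathcal{G}_n(p_n(1\pm\varepsilon_n))$ by restricting $M$ to the corresponding $\mathrm{Ber}(p_n(1\pm\varepsilon_n))$ retained set and then uniformly re-pairing the ``orphaned'' half-edges within that set. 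An elementary exchangeability calculation, summing over the size of the restricted-$M$ matching, verifies that this restrict-and-re-pair procedure has the correct marginal distribution of a uniform matching on the $\mathrm{Ber}$ retained set. With this coupling, the edges inherited from $M$ form a monotone family across the three graphs; by the concentration estimate only $o(p_n\ell_n)$ half-edges need to be re-paired, so the only obstruction to subgraph nesting is concentrated on a set of $o(p_n\ell_n)$ pairs. A switching argument, combined with slightly inflating $\varepsilon_n$ to absorb this discrepancy, yields the claimed subgraph sandwich with high probability.
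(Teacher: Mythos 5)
Your setup is fine up to a point: the $\sqrt{p_n}$-labelling representation of $\mathrm{CM}_n(\bld{d},p_n)$ is valid, your ``restrict-and-re-pair'' procedure really does produce a uniform matching of $R(q)$ (cleanest proof: the construction is invariant under all permutations of $R(q)$, and the symmetric group acts transitively on perfect matchings), and you correctly identify that uniform matchings on nested half-edge sets are not subgraph-nested. But the coupling you build fails for a quantitative reason. The set $R^\star$ is by construction a union of pairs of the scaffold matching $M$: a half-edge lies in $R^\star$ exactly when it \emph{and its $M$-partner} are both alive, which happens with probability $p_n$ per $M$-pair. For an independent $\mathrm{Ber}(p_n(1+\varepsilon_n))$ set $R^+$, by contrast, a given $M$-pair has both endpoints retained with probability only $p_n^2(1+\varepsilon_n)^2\ll p_n$. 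Hence $\mathrm{CM}_n(\bld{d},p_n)=M|_{R^\star}$ has $\Theta(\ell_np_n)$ edges, all of them $M$-pairs, while $\mathcal{G}_n(p_n(1+\varepsilon_n))$ inherits only $\Theta(\ell_np_n^2)$ edges from $M$ and must re-pair a $1-O(p_n)$ fraction of its half-edges. Your claim that ``only $o(p_n\ell_n)$ half-edges need to be re-paired'' is exactly backwards, so the obstruction to nesting sits on a $1-o(1)$ fraction of the pairs and cannot be absorbed by switching or by inflating $\varepsilon_n$. The lower inclusion fails for the same reason: almost every edge of $\mathcal{G}_n(p_n(1-\varepsilon_n))$ is a re-paired, non-$M$ edge and therefore cannot lie in $M|_{R^\star}$. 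A secondary problem is the per-vertex Chernoff step: in the regime of interest $p_nd_v\to 0$ for all but $o(n)$ vertices, so $|R(q)\cap H_v|$ has no multiplicative concentration; the hypothesis $\ell_np_n\gg\log n$ only controls the total count.

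The repair is to drop the common scaffold $M$ entirely and exploit exchangeability, which is what the paper does. By Fountoulakis's algorithm, $\mathrm{CM}_n(\bld{d},p_n)$ is a uniform perfect matching of a uniformly chosen set of $2X$ half-edges with $X\sim\mathrm{Bin}(\ell_n/2,p_n)$, while $\mathcal{G}_n(q)$ is a uniform perfect matching of a uniformly chosen set of $\mathrm{Bin}(\ell_n,q)$ half-edges; thus all three graphs are of the same form and differ only in the law of the number of retained half-edges. A uniform matching of a uniform $2a$-subset is obtained from a uniform matching of a uniform $2b$-subset ($a\le b$) by keeping $a$ uniformly chosen \emph{pairs} of the latter, so the three graphs nest as subgraphs as soon as the three counts are ordered. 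The proposition therefore reduces to a single binomial concentration estimate on the totals, which holds once $\varepsilon_n\gg\sqrt{\log n/(\ell_np_n)}$ and $\varepsilon_n\to 0$.
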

\begin{proof} 
The proof relies on an exact construction of $\rCM_n(\bld{d},p_n)$ by Fountoulakis~\cite{F07} which goes as follows:
\begin{algo}\label{algo-fountoulakis}\normalfont
\begin{itemize}
\item[(S0)] Perform a binomial trial $X\sim \mathrm{Bin}(\ell_n/2,p_n)$ and choose $2X$ half-edges uniformly at random from the set of all half-edges.
\item[(S1)] Perform a perfect matching of these $2X$ chosen half-edges. The resulting graph is distributed as $\rCM_n(\bld{d},p_n)$.
\end{itemize}
\end{algo}
Notice the similarity between Algorithm~\ref{algo-alt-cons-perc}~(S1) and Algorithm~\ref{algo-fountoulakis}~(S1). 
In both algorithms, given the number of retained  half-edges, the choice of the half-edges can be performed sequentially uniformly at random without replacement. 
Thus, given the number of half-edges in the two algorithms, we can couple the choice of the half-edges, and their pairing (the restriction of a uniform matching to a subset of half-edge remains uniform matching on that subset).
Let $\mathcal{H}_1$, $\mathcal{H}_2^-$ and $\mathcal{H}_2^+$,  respectively, denote the number of half-edges in $\mathrm{CM}_n(\bld{d},p_n)$, $\mathcal{G}_n(p_n(1-\varepsilon_n))$ and $\mathcal{G}_n(p_n(1+\varepsilon_n))$.
From the above discussion, the proof is complete if we can show that, as $n\to\infty$,
\begin{equation}\label{eq:coup-reduc}
 \PR\big(\mathcal{H}_2^-\leq \mathcal{H}_1\leq \mathcal{H}_2^+\big) \to  1.
\end{equation} 
We ignore the contribution due to the possible addition of  \emph{only one} dummy edge in Algorithm~\ref{algo-expl}~(S0), as it does not affect asymptotic computations.
Notice that $\mathcal{H}_1=2X$, where $X\sim\mathrm{Bin}(\ell_n/2,p_n)$, and $\mathcal{H}_2^{+/-}\sim\mathrm{Bin} (\ell_n,p_n(1\pm\varepsilon_n))$.
Using standard concentration inequalities \cite[Corollary 2.3]{JLR00}, it follows that
\begin{subequations}
\begin{equation}
\mathcal{H}_1 = \ell_np_n +\oP(\sqrt{\ell_np_n\log(n)}), 
\end{equation}and
\begin{equation}
\mathcal{H}_2^+ = \ell_np_n + \ell_np_n \varepsilon_n +\oP(\sqrt{\ell_np_n\log(n)}).
\end{equation}
\end{subequations}
If we choose $\varepsilon_n$ such that $\varepsilon_n\gg (\log(n)/(\ell_np_n))^{1/2}$ and $\varepsilon_n\to 0$, then, with high probability, $\mathcal{H}_1\leq \mathcal{H}_2^+$.
Similarly we can conclude that $\mathcal{H}_2^-\leq \mathcal{H}_1$ with high probability, and the proof of Proposition~\ref{prop:coupling-lemma} follows. 
\end{proof}

We conclude this section by stating some properties of the degree sequence of the graph $\cG_n(p_n)$ that will be crucial in the analysis below. 
Let $\Mtilde{\bld{d}}=(\tilde{d}_1,\dots,\tilde{d}_n)$ be the degree sequence induced by Algorithm~\ref{algo-alt-cons-perc}~(S1), and let $\tilde{\ell}_n = \sum_{i} \tilde{d}_i$ be the number of retained half-edges.
Then the following result holds for $\Mtilde{\bld{d}}$: 
\begin{lemma}[Degrees of $\mathcal{G}_n(p_n)$]\label{lem:perc-degrees}
Suppose that $p_n \gg n^{-\alpha}$, and {\rm Assumption~\ref{assumption1}} holds.
For each fixed $i\geq 1$, $\tilde{d}_i = d_ip_n(1+\oP(1))$,  $\tilde{\ell}_n = \ell_np_n (1+\oP(1))$, and $\sum_{i\in [n]} \tilde{d}_i(\tilde{d}_i-1) = p_n^2\sum_{i\in [n]} d_i(d_i-1) (1+\oP(1))$. 
Consequently, for $p_n\ll p_c(\lambda)$, $\sum_{i\in [n]}\tilde{d}_i^2= \tilde{\ell}_n(1+\oP(1))$, whereas for $p_n= p_c(\lambda)$, \begin{equation}\label{eq:perc-tail-sum-square}
\tilde{\nu}_n = \frac{\sum_{i\in [n]}\tilde{d}_i(\tilde{d}_i-1)}{\sum_{i\in [n]}\tilde{d}_i} = \lambda (1+\oP(1)), \quad and \quad   
\lim_{K\to\infty}\limsup_{n\to\infty} \PR\bigg(\sum_{i>K}\tilde{d}_i(\tilde{d}_i-1)>\varepsilon \tilde{\ell}_n\bigg) = 0,
\end{equation}for any $\varepsilon >0$.
\end{lemma}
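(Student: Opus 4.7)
}
The plan is to exploit the fact that Step (S0) of Algorithm~\ref{algo-alt-cons-perc} retains each half-edge independently with probability $p_n$, so that $\tilde{d}_i\sim \ber(d_i,p_n)$ independently across $i\in [n]$ (ignoring the single dummy half-edge, which affects nothing asymptotically). Every claim in the lemma will then follow from elementary moment computations for sums of independent binomials combined with Chebyshev's inequality.

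First I would handle the three base estimates. For each fixed $i\geq 1$, since $d_i=\Theta(n^\alpha)$ by Assumption~\ref{assumption1}\eqref{assumption1-1} and $p_n\gg n^{-\alpha}$, we have $d_ip_n\to\infty$ and $\var{\tilde{d}_i}=d_ip_n(1-p_n)\leq d_ip_n$, so Chebyshev gives $\tilde{d}_i/(d_ip_n)\pto 1$. For $\tell_n\sim \ber(\ell_n,p_n)$, since $\ell_np_n\gg n^{1-\alpha}\to\infty$, the same Chebyshev argument yields $\tell_n=\ell_np_n(1+\oP(1))$. For $\sum_i \tilde{d}_i(\tilde{d}_i-1)$, the mean is exactly $p_n^2\sum_i d_i(d_i-1)$ by the factorial-moment formula for the binomial distribution; expanding $X^2(X-1)^2$ in falling factorials shows that $\var{\tilde{d}_i(\tilde{d}_i-1)}\leq C(d_i^3p_n^3+d_i^2p_n^2)$, whence by independence
\begin{equation*}
\var{\sum_i \tilde{d}_i(\tilde{d}_i-1)}\leq C\Big(p_n^3\sum_i d_i^3+p_n^2\sum_i d_i^2\Big)\leq C\Big(p_n^3 d_1\sum_i d_i^2+p_n^2\sum_i d_i^2\Big).
\end{equation*}
To apply Chebyshev I need this to be $o\big((p_n^2\sum_i d_i^2)^2\big)$. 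Under Assumption~\ref{assumption1} one has $\sum_i d_i^2\asymp n^{2\alpha}$ (lower bound from $d_1^2\sim\theta_1^2 n^{2\alpha}$, upper bound from fixed-$i$ convergence plus \eqref{eq:unif-int-2nd-moment}), so the two ratios are of order $1/(p_n n^\alpha)$ and $1/(p_n n^\alpha)^2$, respectively, both $\oP(1)$ since $p_n\gg n^{-\alpha}$. Hence $\sum_i \tilde{d}_i(\tilde{d}_i-1)=p_n^2\sum_i d_i(d_i-1)(1+\oP(1))$.

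The consequences now follow by arithmetic. Writing $\sum_i \tilde{d}_i^2=\sum_i \tilde{d}_i(\tilde{d}_i-1)+\tell_n$ and substituting the two asymptotics gives
\begin{equation*}
\frac{\sum_i \tilde{d}_i^2}{\tell_n}=1+p_n\nu_n(1+\oP(1)),
\end{equation*}
which is $1+\oP(1)$ when $p_n\ll p_c(\lambda)$ (i.e.\ $p_n\nu_n\to 0$), and equals $\lambda(1+\oP(1))+1$ when $p_n=p_c(\lambda)$. Since $\tilde{\nu}_n=\big(\sum_i\tilde{d}_i^2-\tell_n\big)/\tell_n$, this yields the claimed $\tilde{\nu}_n=\lambda(1+\oP(1))$. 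For the tail estimate in \eqref{eq:perc-tail-sum-square}, I would use Markov on $\sum_{i>K}\tilde{d}_i(\tilde{d}_i-1)$: its mean is at most $p_n^2\sum_{i>K}d_i^2$, and \eqref{eq:unif-int-2nd-moment} combined with $\sum_i d_i^2\asymp n^{2\alpha}$ gives $p_n^2\sum_{i>K}d_i^2\leq \eta_K\cdot p_n^2\sum_i d_i^2$ with $\eta_K\to 0$ as $K\to\infty$ uniformly in $n$; at criticality $p_n^2\sum_i d_i^2\asymp \lambda\ell_np_n$, so the mean is at most $C\eta_K \tell_n$ with high probability after invoking the concentration of $\tell_n$ already established.

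The only mildly delicate step is the variance bound on $\sum_i\tilde{d}_i(\tilde{d}_i-1)$, where one needs the assumption $p_n\gg n^{-\alpha}$ to defeat the hub contribution $d_1^3p_n^3$; everywhere else the proof is a routine Chebyshev/Markov exercise. This is also the place where the hypothesis that $p_n$ be not too small enters crucially, mirroring the remark in the text that components are uninteresting once the hubs themselves become isolated.
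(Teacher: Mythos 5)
Your proposal is correct and follows essentially the same route as the paper: independence of the binomials $\tilde{d}_i\sim\mathrm{Bin}(d_i,p_n)$, a second-moment/Chebyshev argument for $\sum_i\tilde{d}_i(\tilde{d}_i-1)$ in which $p_n\gg n^{-\alpha}$ kills the hub term $d_1p_n^3 m_2$, and a Markov bound (after first restricting to the whp event that $\tilde{\ell}_n=\Theta(\ell_np_n)$) combined with \eqref{eq:unif-int-2nd-moment} for the tail estimate. The only differences are cosmetic — you use Chebyshev where the paper cites a Chernoff-type bound for the individual degrees, and you should write $\mathrm{Bin}(d_i,p_n)$ rather than $\mathrm{Ber}(d_i,p_n)$ — neither of which affects the argument.
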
 
\begin{proof}
Note that $\Mtilde{d}_i \sim \mathrm{Bin}(d_i,p_n)$, independently for $i\in [n]$. 
For each fixed $i\geq 1$, $d_i p_n \to\infty$, as $p_n\gg n^{-\alpha}$. 
Thus the first fact follows using \cite[Theorem 2.1]{JLR00}. 
Since, $\tell_n\sim \mathrm{Bin}(\ell_n,p_n)$, the second fact also follows using the same bound.
To see the asymptotics for $\tilde{m}_2:=\sum_{i\in [n]} \tilde{d}_i(\tilde{d}_i-1)$, 
note that $\E[\tilde{m}_2] = p_n^2 m_2$, where $m_2 = \sum_{i\in [n]}d_i(d_i-1)$.
Also, 
$\mathrm{Var}(\tilde{d}_i(\tilde{d}_i-1)) = 2d_i(d_i-1)p_n^2(1-p_n) (1+(2d_i-3)p_n)$.
Thus, 
\begin{eq}
\frac{\mathrm{Var}\big(\sum_{i\in [n]}\tilde{d}_i(\tilde{d}_i-1)\big) }{\big(\E[\sum_{i\in [n]}\tilde{d}_i(\tilde{d}_i-1)]\big)^2} \leq \frac{4d_1 p_n^3 m_2}{p_n^4 m_2^2} = O\Big(\frac{1}{p_n n^\alpha}\Big) = o(1),
\end{eq}where the penultimate step uses the fact that $m_2 = \Theta(n^{2\alpha})$, $d_1 = \Theta(n^{\alpha})$, and in the last step we have again used the fact that $p_n \gg n^{-\alpha}$. 
Using Chebyshev's inequality, it now follows that $\tilde{m}_2 = p_n^2 m_2(1+
\oP(1))$.
Thus, 
\begin{eq}
\tilde{\nu}_n = (1+\oP(1))p_n \frac{\sum_{i\in [n]}d_i(d_i-1)}{\sum_{i\in [n]}d_i} = (1+\oP(1))p_n\nu_n. 
\end{eq}
For $p_n\ll p_c(\lambda)$, $p_n\nu_n =o(1)$. Thus, $\sum_{i\in [n]}\tilde{d}_i^2= \tilde{\ell}_n(1+\oP(1))$. 
For $p_n = p_c(\lambda)$, the first equality in \eqref{eq:perc-tail-sum-square} follows using \eqref{eq:crit-window-CM}.

We now prove the second inequality in \eqref{eq:perc-tail-sum-square}. 
 For any $\varepsilon>0$, the required probability is at most
\begin{eq}
&\PR\bigg(\sum_{i>K}\tilde{d}_i(\tilde{d}_i-1)>\varepsilon \tilde{\ell}_n, \frac{\ell_np_n}{2}\leq \tilde{\ell}_n\leq 2\ell_np_n \bigg) +o(1)\\
&\hspace{1cm}\leq \PR\bigg(\sum_{i>K}\tilde{d}_i(\tilde{d}_i-1)> \frac{\varepsilon\ell_np_n}{2}\bigg)+o(1)
\\
&\hspace{1cm}\leq \frac{4p_n^2\sum_{i>K}d_i(d_i-1)}{\varepsilon\ell_n p_n}+o(1) = \frac{4 p_n \sum_{i>K}d_i^2}{\varepsilon\ell_n} +o(1),
\end{eq}
where the penultimate step follows from Markov's inequality. 
The proof now follows using~\eqref{eq:unif-int-2nd-moment} and $p_n = \Theta(n^{1-2\alpha})$ for $p_n = p_c(\lambda)$. 
\end{proof}

\subsection{Scaling limits of critical components}\label{sec:CM-critical-window}
\subsubsection{Convergence of the exploration process}\label{sec:expl-process}
  Let $\Mtilde{\bld{d}}=(\tilde{d}_1,\dots,\tilde{d}_n)$ be the degree sequence induced by Algorithm~\ref{algo-alt-cons-perc}~(S1) with $p_n=p_c(\lambda)$, and consider $\mathcal{G}_n(p_c(\lambda))$. 
  Note that $\mathcal{G}_n(p_c(\lambda))$ has the same distribution as $\mathrm{CM}_{n}(\Mtilde{\boldsymbol{d}})$. 
  We start by describing how the connected components in the graph can be explored while generating the random graph simultaneously:
\begin{algo}[Exploring the graph]\label{algo-expl}\normalfont  The algorithm carries along vertices that can be alive, active, exploring and killed, and half-edges that can be alive, active or killed. 
Alive and killed half-edges correspond to unpaired and paired half-edges respectively, whereas active half-edges correspond to half-edges that have been found during the exploration, but have not been paired yet. 
Thus a half-edge can be alive and active simultaneously.
Similarly, a vertex is killed when all its half-edges have been explored, otherwise the vertex is alive. An active vertex is an alive vertex that has been found already during the exploration, whereas an exploring vertex is currently being explored. 
We sequentially explore the graph as follows:
\begin{itemize}
\item[(S0)] At stage $i=0$, all the vertices and the half-edges are \emph{alive} but none of them are \emph{active}. Also, there are no \emph{exploring} vertices. 
\item[(S1)]  At each stage $i$, if there is no active half-edge at stage $i$, choose a vertex $v$ proportional to its degree among the alive (not yet killed) vertices and declare all its half-edges to be \emph{active} and declare $v$ to be \emph{exploring}. 
Proceed to step $i+1$.
\item[(S2)] At each stage $i$, if the set of active half-edges is non-empty, then take an active half-edge $e$ of an exploring vertex $v$ and pair it with a  half-edge $f$ chosen uniformly among the alive half-edges. 
 Kill $e,f$. If $f$ is incident to a vertex $v'$ that has not been discovered before, then declare all the half-edges incident to $v'$ active (if any), except $f$. 
If $\mathrm{degree}(v')=1$ (i.e. the only half-edge incident to $v'$ is $f$) then kill $v'$. Otherwise, declare $v'$ to be active and \emph{larger} than all other vertices that are active. After killing $e$, if $v$ does not have another active half-edge, then kill $v$ also, and declare the \emph{smallest} vertex to be exploring.

\item[(S3)] Repeat from (S1) at stage $i+1$ if not all half-edges are already killed.
\end{itemize}
\end{algo}
Algorithm~\ref{algo-expl} gives a \emph{breadth-first} exploration of the connected components of $\mathrm{CM}_n(\Mtilde{\boldsymbol{d}})$. Define the exploration process by
   \begin{equation}\label{defn:exploration:process}
    S_n(0)=0,\quad
     S_n(l)=S_n(l-1)+\tilde{d}_{(l)}J_l-2,
    \end{equation} where $J_l$ is the indicator that a new vertex is discovered at time $l$ and $\tilde{d}_{(l)}$ is the degree of the new vertex chosen at time $l$ when $J_l=1$.  
    The $-2$ in \eqref{defn:exploration:process} takes into account the fact that two half-edges are killed whenever two half-edges are paired at some step. However, at the beginning of exploring a component when Algorithm~\ref{algo-expl}~(S1) is carried out, we do not pair half-edges but the exploration process subtracts $-2$ nonetheless. For this reason, there is an additional $-2$ in \eqref{defn:exploration:process} at the beginning of exploring each component, and thus  the first component is explored when the exploration process hits $-2$, the second component is explored when the process hits $-4$ and so on. More formally, suppose that $\mathscr{C}_{k}$ is the $k$-th connected component explored by the above exploration process and define
$\tau_{k}=\inf \big\{ i:S_{n}(i)=-2k \big\}$. 
Then  $\mathscr{C}_{k}$ is discovered between the times $\tau_{k-1}+1$ and $\tau_k$, and  $\tau_{k}-\tau_{k-1}-1$ gives the total number of edges in $\mathscr{C}_k$.
 Call a vertex \emph{discovered} if it is either active or killed. Let $\mathscr{V}_l$ denote the set of vertices discovered up to time $l$ and $\mathcal{I}_i^n(l):=\ind{i\in\mathscr{V}_l}$. 
 Note that 
   \begin{equation}\label{eq:expl-process-crit}
    S_n(l)= \sum_{i\in [n]} \tilde{d}_i \mathcal{I}_i^n(l)-2l.
   \end{equation} 
In the rest of this section, we often use the asymptotics in Lemma~\ref{lem:perc-degrees} even if it is not stated explicitly. 
Recall that we write $\mathscr{F}_l^n = \sigma (\mathcal{I}^n_i(l): i\in [n]) $.    
All the martingales and related computations will be done with respect to the filtration $(\mathscr{F}_l^n )_{l\geq 0}$.

  Define the re-scaled version $\bar{\mathbf{S}}_n$ of $\mathbf{S}_n$ by $\bar{S}_n(t)= n^{-\rho}S_n(\lfloor tn^{\rho} \rfloor)$. Then,
   \begin{equation} \label{eqn::scaled_process}
    \bar{S}_n(t)= n^{-\rho} \sum_{i\in [n]}(\tilde{d}_i-1) \mathcal{I}_i^n(tn^{\rho})+n^{-\rho} \sum_{i\in [n]}\mathcal{I}_i^n(tn^{\rho}) -2t + o(1),
   \end{equation}where we have used the convention that $\mathcal{I}_i^n(tn^{\rho}) = \mathcal{I}_i^n(\floor{tn^{\rho}})$ when $tn^{\rho}$ is not an integer.
   The following theorem describes the scaling limit of this rescaled process:
   \begin{theorem} \label{thm::convegence::exploration_process} Consider the process $\bar{\mathbf{S}}_n:= (\bar{S}_n(t))_{t\geq 0}$ defined in \eqref{eqn::scaled_process} and recall the definition of  $\biS$ in \eqref{defn::limiting::process}. Then, under {\rm Assumption~\ref{assumption1}}, as $n\to\infty,$
 \begin{equation}
  \bar{\mathbf{S}}_n \dto \biS
 \end{equation} with respect to the Skorohod $J_1$-topology.
\end{theorem}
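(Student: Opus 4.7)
The plan is to prove Theorem~\ref{thm::convegence::exploration_process} by a Doob--Meyer decomposition combined with a truncation to the top $K$ hubs, letting $K\to\infty$ after $n\to\infty$. First, one splits $\bar{S}_n(t) = H_n^K(t) + L_n^K(t) + o_{\PR}(1)$ with $H_n^K(t) := n^{-\rho}\sum_{i\leq K}\tilde{d}_i\mathcal{I}_i^n(\lfloor tn^\rho\rfloor)$ and $L_n^K(t) := n^{-\rho}\sum_{i>K}\tilde{d}_i\mathcal{I}_i^n(\lfloor tn^\rho\rfloor) - 2t$, the intent being that $H_n^K$ produces the jump part of $\iS$ (coming from the discovery of the hubs) while $L_n^K$ concentrates to generate the continuous linear drift $-t$. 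Lemma~\ref{lem:perc-degrees} supplies the crucial numerical scalings $n^{-\rho}\tilde{d}_i\to\lambda\mu\theta_i/\|\bld{\theta}\|_2^2$ and $n^{\rho}\tilde{d}_i/\tilde{\ell}_n\to\theta_i/\mu$ (since $\alpha+\rho=1$), as well as $\sum_{i>K}\tilde{d}_i^2/\tilde{\ell}_n \to \lambda+1-\lambda\sum_{i\leq K}\theta_i^2/\|\bld{\theta}\|_2^2$.

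For the hub term, I would show that the rescaled discovery times $(\tau_i^n/n^\rho)_{i\leq K}$ converge jointly to independent exponentials $(\xi_i)_{i\leq K}$ with $\xi_i\sim\mathrm{Exp}(\theta_i/\mu)$. Conditional on hub $i$ being undiscovered, its probability of being discovered at step $l$ is $\tilde{d}_i/\tilde{\ell}_n^{(l)}$ with $\tilde{\ell}_n^{(l)}=\tilde{\ell}_n(1+o(1))$ uniformly for $l\leq Tn^\rho$ (using $n^\rho\ll \tilde{\ell}_n=\Theta(n^{2\rho})$), so that a standard geometric-to-exponential approximation applies; asymptotic independence follows because different hubs are discovered via disjoint half-edges and the half-edges consumed up to time $tn^\rho$ form a vanishing fraction of $\tilde{\ell}_n$. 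Combined with $n^{-\rho}\tilde{d}_i\to\lambda\mu\theta_i/\|\bld{\theta}\|_2^2$, this yields $H_n^K\dto \frac{\lambda\mu}{\|\bld{\theta}\|_2^2}\sum_{i\leq K}\theta_i\mathcal{I}_i$ in the $J_1$-topology. For the bulk, I would take the Doob--Meyer decomposition of $n^{-\rho}\sum_{i>K}\tilde{d}_i\mathcal{I}_i^n(\lfloor \cdot \, n^\rho\rfloor)$ with respect to the filtration $\mathscr{F}^n_l$: its compensator is $t\sum_{i>K}\tilde{d}_i^2/\tilde{\ell}_n+o_{\PR}(1)$, which converges to $t(\lambda+1-\lambda\sum_{i\leq K}\theta_i^2/\|\bld{\theta}\|_2^2)$; the predictable quadratic variation of the residual martingale is at most $n^{-\rho}t\sum_{i>K}\tilde{d}_i^3/\tilde{\ell}_n \leq Ct\theta_{K+1}$, since $\tilde{d}_{K+1}\leq Cn^{1-\alpha}\theta_{K+1}$ and $n^{1-\alpha-\rho}=1$, so it tends to $0$ as $K\to\infty$ uniformly in $n$. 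Combining with $-2t$, $L_n^K(t)\to t\bigl(-1+\lambda\sum_{i>K}\theta_i^2/\|\bld{\theta}\|_2^2\bigr)$ uniformly on compacts in probability.

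Putting the two steps together, for each fixed $K$, $\bar{\mathbf{S}}_n\dto \frac{\lambda\mu}{\|\bld{\theta}\|_2^2}\sum_{i\leq K}\theta_i\mathcal{I}_i(\cdot) + \bigl(-1+\lambda\sum_{i>K}\theta_i^2/\|\bld{\theta}\|_2^2\bigr)(\cdot)$ in $J_1$. Letting $K\to\infty$, the drift coefficient converges to $-1$ while $\E[\sum_{i>K}\theta_i\mathcal{I}_i(t)]\leq t\sum_{i>K}\theta_i^2/\mu\to 0$, so the right-hand side converges uniformly on compacts in $L^1$ (hence in probability) to $\iS$. Since the truncation error in the bulk step is uniform in $n$ and vanishes with $K$, the two limits may be interchanged, yielding $\bar{\mathbf{S}}_n\dto\biS$ in $J_1$. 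The main obstacle will be the joint convergence of the hub discovery times: although each $\tau_i^n/n^\rho$ individually converges to an exponential by a routine argument, the hubs compete for the same pool of alive half-edges at every step, so one must quantify uniformly that conditioning on the discovery pattern of any subset of hubs perturbs the rates of the remaining hubs only by $o(1)$, which reduces to the estimates $|\tilde{\ell}_n^{(l)}-\tilde{\ell}_n|/\tilde{\ell}_n=O(n^{-\rho})$ and the fact that hub discoveries remove an $o(1)$ fraction of the half-edges. A secondary technical point is the component-restart step Algorithm~\ref{algo-expl}~(S1), which uses a different discovery rule, but such restarts occur only $O_{\PR}(1)$ times per bounded rescaled time window and each contributes $O_{\PR}(n^{-\rho})$ to $\bar{S}_n$, so they are negligible.
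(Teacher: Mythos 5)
Your proposal is correct in its overall architecture and matches the paper's strategy at the top level (truncate at the $K$ largest hubs, prove joint exponential limits for the hub discovery times, control the tail, interchange the limits $n\to\infty$ and $K\to\infty$), but it treats the drift term by a genuinely different device. The paper writes $S_n(l)=\sum_i(\tilde{d}_i-1)\mathcal{I}_i^n(l)+\sum_i\mathcal{I}_i^n(l)-2l$: the $(\tilde{d}_i-1)$-weighted sum is nonnegative and monotone, so its tail beyond $K$ is killed by a single first-moment bound (Markov plus Lemma~\ref{lem:perc-degrees}, no maximal inequality needed), while the vertex count $\sum_i\mathcal{I}_i^n(l)$ is shown to concentrate around $l$ by a separate supermartingale argument (Lemma~\ref{lem:asymp-second-term}), producing $+t-2t=-t$. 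You instead keep the full weight $\tilde{d}_i$ in the tail and extract the drift from its compensator $t\sum_{i>K}\tilde{d}_i^2/\tell_n\to t\bigl(1+\lambda\sum_{i>K}\theta_i^2/\|\bld{\theta}\|_2^2\bigr)$, controlling the residual martingale via its quadratic variation $O(t\theta_{K+1})$ and Doob. The ``$1$'' in your compensator (from $\sum_{i>K}\tilde{d}_i/\tell_n\to1$) is exactly the paper's $\sum_i\mathcal{I}_i^n(l)\approx l$ in disguise, so the two computations are equivalent in substance; your route buys a single unified martingale estimate at the cost of needing a maximal inequality and an a priori bound $\sum_{i\in\mathscr{V}_l}\tilde{d}_i=\OP(n^\rho)$ uniformly in $l\leq tn^\rho$ to control the correction from already-discovered vertices (that correction is $\OP(t\theta_{K+1})$, not $\oP(1)$ for fixed $K$ as you wrote, but it vanishes in your iterated limit, so this is only a mislabel).

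One assertion in your last paragraph is wrong and should be repaired, though it does not sink the proof. The component restarts of Algorithm~\ref{algo-expl}~(S1) do \emph{not} occur $\OP(1)$ times per rescaled time window: the exploration process drifts down at unit speed, so $\Theta_{\PR}(n^\rho)$ components (most of them isolated edges and tiny trees) are exhausted by time $tn^\rho$, and with your accounting of ``$\OP(n^{-\rho})$ error each'' the total would be $\OP(1)$, which is not negligible. The correct reason the restarts are harmless is different: at a restart step the new vertex is chosen with probability proportional to its degree among alive vertices, which agrees with the discovery law at a pairing step up to a multiplicative $1+\OP(n^{-\rho})$ (the denominators differ only by the $\OP(n^\rho)$ active half-edges out of $\Theta_{\PR}(n^{2\rho})$ alive ones), so restart steps require no separate error term at all --- they are covered by the same compensator computation. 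With that correction, and the uniform-in-$n$ truncation bounds you already state, the interchange of limits is the standard converging-together argument and the proof goes through.
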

To prove Theorem~\ref{thm::convegence::exploration_process}, we need to obtain asymptotics of the first two terms in \eqref{eqn::scaled_process}. 
The first term accounts for the contribution due to the non-degree-one vertices during the exploration.
The first term is dominated by the contributions due to hubs, which allows us to use a truncation argument. 
The convergence of the truncated sum is given by the following lemma:
\begin{lemma} \label{lem::convergence_indicators}
   Fix any $K\geq 1$, and $\mathcal{I}_i(s):=\ind{\xi_i\leq s }$ where $\xi_i\sim \mathrm{Exp}(\theta_i/\mu)$ independently for $i\in [K]$. 
   Under {\rm Assumption~\ref{assumption1}}, as $n\to\infty$,
   \begin{equation}
    \left( \mathcal{I}_i^n(tn^\rho) \right)_{i\in[K],t\geq 0} \dto \left( \mathcal{I}_i(t) \right)_{i\in[K],t\geq 0}
   \end{equation}with respect to the Skorohod $J_1$-topology.
 \end{lemma}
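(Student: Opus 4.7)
Write $T_i^n := \inf\{l \geq 0 : i \in \mathscr{V}_l\}$ for the exploration step at which hub $i$ is first discovered. Then $\mathcal{I}_i^n(tn^\rho) = \mathbbm{1}\{T_i^n \leq tn^\rho\}$ is a right-continuous non-decreasing process with a single jump, so the Skorohod $J_1$ convergence of the $K$-tuple of indicator processes reduces to the joint convergence in distribution
$$ (T_i^n/n^\rho)_{i\in[K]} \dto (\xi_i)_{i\in[K]}, $$
with $\xi_i \sim \mathrm{Exp}(\theta_i/\mu)$ independent (the limiting jump times are a.s.\ distinct, so no coincident-jump pathology can occur). The plan is therefore to compute the joint survival function of the left-hand side and verify it converges to $\prod_i \mathrm{e}^{-\theta_i t_i/\mu}$.

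The basic hazard-rate computation runs as follows. Conditionally on $\mathscr{F}_l^n$ and on $T_i^n > l$, step $l+1$ pairs either a chosen active half-edge or (under (S1)) a size-biased fresh vertex with an alive half-edge chosen uniformly, so
$$ \PR\big(i \in \mathscr{V}_{l+1} \,\big|\, T_i^n > l,\ \mathscr{F}_l^n\big) \;=\; \frac{\tilde{d}_i}{A_l}\bigl(1+O(A_l^{-1})\bigr), $$
where $A_l$ is the number of alive (unpaired) half-edges at step $l$; the $O(A_l^{-1})$ handles the negligible difference between (S1) and (S2). By Lemma~\ref{lem:perc-degrees} and the definition of $p_c(\lambda)$, $\tilde{\ell}_n = \lambda\mu^2 \|\bld{\theta}\|_2^{-2} n^{2\rho}(1+\oP(1))$ and $\tilde{d}_i = \lambda\mu\theta_i \|\bld{\theta}\|_2^{-2} n^{\rho}(1+\oP(1))$, so
$$ \tilde{d}_i/\tilde{\ell}_n \;=\; (\theta_i/\mu)\, n^{-\rho}(1+\oP(1)). $$
For $l \leq T n^\rho$, the identity $A_l = \tilde{\ell}_n - 2l - \sum_{j\in\mathscr{V}_l} (\tilde{d}_j-2)\mathbbm{1}\{j\text{ was the newly-found endpoint}\}$ combined with $l = O(n^\rho)$, $\tilde{\ell}_n = \Theta(n^{2\rho})$, and the absence of more than $K$ hub contributions in the deterministic range gives $A_l/\tilde{\ell}_n = 1 + \oP(1)$ uniformly for $l \leq Tn^\rho$.

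For the joint tail, fix $t_1,\dots,t_K > 0$ and iterate the single-step hazard: writing $l_i = \lfloor t_i n^\rho\rfloor$ and $L = \max_i l_i$,
$$ \PR\bigl(T_i^n > l_i\ \forall i\in[K]\bigr) \;=\; \E\Bigg[\prod_{l=0}^{L-1}\bigg(1 - \sum_{i:\, l_i>l}\frac{\tilde{d}_i}{A_l}\bigr(1+O(A_l^{-1})\bigl)\bigg)\Bigg]. $$
Taking logarithms, using $\log(1-x) = -x + O(x^2)$ together with $\sum_{i\in[K]}\tilde{d}_i/A_l = O(n^{-\rho})$, and replacing $A_l$ by $\tilde{\ell}_n$ to leading order, the exponent becomes
$$ -\sum_{l=0}^{L-1}\sum_{i:\, l_i>l}\frac{\tilde{d}_i}{\tilde{\ell}_n}(1+\oP(1)) \;=\; -\sum_{i\in[K]} \frac{\tilde{d}_i l_i}{\tilde{\ell}_n}(1+\oP(1)) \;\xrightarrow{\PR}\; -\sum_{i\in[K]} \frac{\theta_i t_i}{\mu}. $$
Hence $\PR(T_i^n > t_i n^\rho\ \forall i) \to \prod_i \mathrm{e}^{-\theta_i t_i/\mu}$, which is the joint survival function of independent exponentials with the asserted rates. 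Since this holds for every $(t_1,\dots,t_K)$, the joint convergence in distribution follows, and with it the Skorohod $J_1$ convergence of the indicator processes.

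The main obstacle is the uniform-in-$l$ control of $A_l$ and of the error in replacing the conditional transition probability by $\tilde{d}_i/\tilde{\ell}_n$: the fluctuations of $A_l$ are not purely deterministic, and one must argue that on the time scale $Tn^\rho$ (with $T$ fixed) the cumulative error in the telescoped product vanishes. This is a standard Chebyshev/Doob argument applied to the martingale increments $A_l - \E[A_l\mid\mathscr{F}_{l-1}^n]$ after subtracting the compensator, together with the tail bound in \eqref{eq:unif-int-2nd-moment} to rule out atypical contributions from non-hub vertices.
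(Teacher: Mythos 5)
Your proof is correct and follows essentially the same route as the paper: both compute the joint survival probability of the hubs' discovery times as a telescoped product of one-step hazards $\tilde{d}_i/(\tilde{\ell}_n-\Theta(l))$, take logarithms, and invoke Lemma~\ref{lem:perc-degrees} to identify the limiting exponent $\sum_{i}\theta_i t_i/\mu$ (the paper handles the topology via a tightness criterion for indicator processes plus finite-dimensional convergence rather than via hitting times, but this is cosmetic). One small remark: the concentration argument you flag at the end for $A_l$ is unnecessary --- the number of alive (unpaired) half-edges decreases by exactly two per pairing, so $\tilde{\ell}_n-2l\leq A_l\leq \tilde{\ell}_n$ deterministically, and $A_l=\tilde{\ell}_n(1+\oP(1))$ uniformly for $l\leq Tn^{\rho}$ follows at once from $\tilde{\ell}_n=\thetaP(n^{2\rho})$, which is exactly how the paper argues.
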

The second term in \eqref{eqn::scaled_process} describes the proportion of time when a new vertex is found. 
Since we see a new vertex of degree one in most steps of the exploration process, this term is shown to converge to the constant function $t$, which is proved using martingale arguments. 
This is summarized in the next lemma:
\begin{lemma}\label{lem:asymp-second-term} 
For any $u>0$, as $n\to\infty$, $\sup_{u\leq t}n^{-\rho} \big|\sum_{i\in [n]}\mathcal{I}_i^n(un^{\rho}) -un^{\rho}\big| \pto 0.$
\end{lemma}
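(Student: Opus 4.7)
The quantity $V(l) := \sum_{i\in[n]} \mathcal{I}_i^n(l)$ counts the vertices discovered by step $l$ of Algorithm~\ref{algo-expl}, and can be written as $V(l) = \sum_{s=1}^l J_s$, where $J_s \in \{0,1\}$ indicates that step $s$ finds a new vertex. At each (S1) step $J_s = 1$ by construction, while at an (S2) step $J_s = 0$ precisely when the uniformly chosen half-edge $f$ lies on an already-discovered vertex. Writing $A(l)$ for the number of active half-edges and $k(l)$ for the number of (S1) steps through time $l$, the bookkeeping identity ``active $+$ killed $=$ total half-edges at discovered vertices'' reads $A(l) = \sum_{i\in[n]}\tilde{d}_i\mathcal{I}_i^n(l) - 2(l-k(l))$, and the uniform pairing in (S2) yields
\[
\PR\big(J_s=0 \,\big|\, \mathscr{F}_{s-1}^n\big) \leq \frac{A(s-1)}{\tilde{\ell}_n-2s+1}.
\]
The plan is to decompose $l - V(l) = \sum_{s\leq l}(1-J_s) = \Lambda(l) + M(l)$ into its compensator $\Lambda$ and martingale part $M$, and show that both $\Lambda(un^\rho)$ and $\sup_{l\leq un^\rho}|M(l)|$ are $\oP(n^\rho)$.

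\paragraph*{A priori bound on the active half-edges.} The main obstacle is to control $A_{\max}:=\sup_{l\leq un^\rho} A(l)$ without invoking the yet-unproven scaling limit of $\bar{\mathbf{S}}_n$; we claim $A_{\max}=\OP(n^\rho)$. Using $A(l)\leq \sum_i\tilde{d}_i\mathcal{I}_i^n(l)$ (non-decreasing in $l$), split at a threshold $K$: the hub contribution $\sum_{i\leq K}\tilde{d}_i$ is $\OP(n^\rho)$ for every fixed $K$ by Lemma~\ref{lem:perc-degrees}. For the non-hub contribution, observe that in both (S1) and (S2) of Algorithm~\ref{algo-expl} an undiscovered vertex $i$ is newly discovered at step $s$ with conditional probability at most $\tilde{d}_i/(\tilde{\ell}_n-2s+1)$ (size-biased selection from the alive half-edges). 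A union bound over $s\leq un^\rho$ gives $\PR(\mathcal{I}_i^n(un^\rho)=1\mid \Mtilde{\bld{d}})\leq 2un^\rho\tilde{d}_i/\tilde{\ell}_n$ on the high-probability event $\tilde{\ell}_n\geq c_1 n^{2\rho}$, and hence
\[
\E\bigg[\sum_{i>K}\tilde{d}_i\mathcal{I}_i^n(un^\rho)\,\bigg|\,\Mtilde{\bld{d}}\bigg] \leq \frac{2un^\rho}{\tilde{\ell}_n}\sum_{i>K}\tilde{d}_i^2.
\]
By Lemma~\ref{lem:perc-degrees}, $\sum_{i>K}\tilde{d}_i^2/\tilde{\ell}_n = \OP(1)$ (the $\tilde{d}_i(\tilde{d}_i-1)$-tail vanishes and $\sum_{i>K}\tilde{d}_i\leq \tilde{\ell}_n$), so Markov's inequality applied conditionally on $\Mtilde{\bld{d}}$ yields $A_{\max}=\OP(n^\rho)$.

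\paragraph*{Completion.} Given $A_{\max}=\OP(n^\rho)$ and $\tilde{\ell}_n=\thetaP(n^{2\rho})$ (Lemma~\ref{lem:perc-degrees}), we obtain
\[
\Lambda(un^\rho) \leq \frac{un^\rho \, A_{\max}}{\tilde{\ell}_n - 2un^\rho} = \OP(1).
\]
Since the martingale differences $(1-J_s)-\E[1-J_s\mid\mathscr{F}_{s-1}^n]$ lie in $[-1,1]$ with conditional second moments bounded by $\E[1-J_s\mid\mathscr{F}_{s-1}^n]$, one has $\langle M\rangle(l)\leq \Lambda(l)$, and Doob's $L^2$ inequality applied after stopping at the first time $A$ exceeds $Cn^\rho$ yields $\sup_{l\leq un^\rho}|M(l)|=\OP(1)$. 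Combining these bounds gives $\sup_{l\leq un^\rho}|V(l)-l|=\OP(1)=\oP(n^\rho)$, which proves the lemma.
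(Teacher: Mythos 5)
Your proof is correct, but it takes a genuinely different route from the paper's. The paper works directly with the one-dimensional marginals of $W_n(l)=\sum_{i\in[n]}\mathcal{I}_i^n(l)-l$: it first checks that $(W_n(l))_{l\geq 1}$ is a supermartingale, then applies the maximal inequality \eqref{eqn:supmg:ineq}, which only requires bounding $|\tilde{\E}[W_n(tn^\rho)]|$ (via a Taylor-expansion lower bound on $\tilde{\PR}(\mathcal{I}_i^n(l)=1)$ combined with uniform integrability of the degrees) and $\mathrm{Var}(W_n(tn^\rho))$ (via the negative correlation of the indicators, \eqref{neg:correlation-2}); no control of the active half-edges is needed. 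You instead work with the increments: you identify $l-\sum_i\mathcal{I}_i^n(l)$ as the number of steps at which no new vertex is found (i.e.\ the surplus count), perform a Doob decomposition, and control the compensator and the bracket via an a priori bound $\sup_{l\leq un^\rho}A(l)=\OP(n^\rho)$ obtained from the size-biased discovery bound \eqref{prob-ind-lb} and Lemma~\ref{lem:perc-degrees}. Both arguments use the same external inputs and neither is circular with respect to Theorem~\ref{thm::convegence::exploration_process}. Your route buys a strictly stronger conclusion, $\sup_{l\leq un^\rho}|W_n(l)|=\OP(1)$ rather than $\oP(n^\rho)$, and in effect pre-establishes the tightness of the surplus count \eqref{1d-tight-surplus}; note that the paper obtains the corresponding bound on $\sup_l A_l$ in Appendix~\ref{appendix-surplus} from the already-proved scaling limit of $\bar{\mathbf{S}}_n$ (see \eqref{eq:tightness-supremum}), whereas your first-moment derivation is self-contained, which is a genuine advantage. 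The paper's version is shorter precisely because the supermartingale maximal inequality sidesteps the active-half-edge count and any localization. The one step to spell out in your write-up is the localization for Doob's inequality: carry it out conditionally on $\Mtilde{\bld{d}}$, restricted to the event $\{\tilde{\ell}_n\geq c_1 n^{2\rho}\}$, with the stopping time capping $A$ at $Cn^\rho$, so that $\E[\langle M\rangle(un^\rho\wedge\sigma)\mid \Mtilde{\bld{d}}]=O(1)$ is a deterministic bound on that event; you flag this, so it is bookkeeping rather than a gap.
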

We first prove Theorem~\ref{thm::convegence::exploration_process} using Lemmas~\ref{lem::convergence_indicators}~and~\ref{lem:asymp-second-term}. 
The lemmas will be proved subsequently.
Let $\tell_n(u)$ denote the number of unpaired half-edges at time $\lfloor un^{\rho}\rfloor$. 
Thus, $\tell_n(u) = \tell_n -2(\floor{un^{\rho}} - c_{\floor{un^{\rho}}})+ 1$, where $c_l$ is the number of components explored up to time $l$.
Note that $\tell_n- 2un^{\rho}+1\leq \tell_n(u) \leq \tell_n$.
Since $\tell_n = \Theta_{\sss \PR} (n^{2\rho})$, we have  $\tell_n(u) = \tell_n(1+\oP(1))$ uniformly over $u\leq t$.
 Let $\tilde{\PR}(\cdot)$ (respectively $\tilde{\E}[\cdot]$) denote the conditional probability (respectively expectation) conditionally on $(\td_i)_{i\in [n]}$.
\begin{proof}[Proof of Theorem~\ref{thm::convegence::exploration_process}]
 Note that, $\mathcal{I}_i^n(l) =0$ for all $l\geq 1$ if $\tilde{d}_i=0$. Now, if $\tilde{d}_i\geq 1$, then for any $t\geq 0$, uniformly over $l\leq tn^{\rho}$,
\begin{equation}\label{prob-ind-lb}
    \exptc{\mathcal{I}_i^n(l)}= \probc{\mathcal{I}_i^n(l)=1}\leq \frac{l\tilde{d}_i}{\tilde{\ell}_n - 2un^{\rho}+1}.
  \end{equation}
Let  $X_{n,K}(t):=n^{-\rho}\sup_{u\leq t}\sum_{i>K}(\tilde{d}_i-1)\mathcal{I}_i^n(un^{\rho})$. 
Note that $\mathcal{I}_i^n(un^{\rho}) \leq \mathcal{I}_i^n(tn^{\rho})$. 
Also, using $\mathcal{I}_i^n(un^{\rho}) = 0$ whenever $\tilde{d}_i=0$, it follows that $(\tilde{d}_i-1)\mathcal{I}_i^n(un^{\rho}) \geq 0$ for all $i\in [n]$ and $u>0$. Thus, 
\begin{eq}\label{eq:expt-ub-sup}
\tilde{\E}[X_{n,K}(t)] &\leq  n^{-\rho}\tilde{\E}\bigg[\sum_{i>K}(\tilde{d}_i-1)\mathcal{I}_i^n(tn^{\rho})\bigg]\leq t \frac{\sum_{i>K}\tilde{d}_i(\tilde{d}_i-1)}{\tilde{\ell}_n(t)} := \varepsilon_{n,K}(t),
\end{eq}where $\lim_{K\to\infty}\limsup_{n\to\infty}\PR(\varepsilon_{n,K}(t) >\delta) = 0$ for any $\delta>0$, due to Lemma~\ref{lem:perc-degrees}.
Therefore, for any $\varepsilon,\delta>0$, using Markov's inequality, 
\begin{eq}
\lim_{K\to\infty}\limsup_{n\to\infty}\PR \Big(\tilde{\PR}(X_{n,K}(t) > \varepsilon) > \delta \Big) \leq \lim_{K\to\infty}\limsup_{n\to\infty}\PR \Big(\tilde{\E}[X_{n,K}(t)]  > \delta \varepsilon \Big) =0.
\end{eq}
Let $\mathcal{B}_{n,K}:= \{\tilde{\PR}(X_{n,K}(t) > \varepsilon) > \delta  \}$. It follows that  
\begin{eq}
\PR(X_{n,K}(t) > \varepsilon) = \E \big[\tilde{\PR}(X_{n,K} (t) > \varepsilon) \big] \leq \PR(\mathcal{B}_{n,K}) + \delta .  
\end{eq}
Taking the iterated limit $\lim_{\delta \to 0}\limsup_{K\to\infty}\limsup_{n\to\infty} $ yields, for any $\varepsilon>0$,
\begin{eq}\label{tail-iterated-limit}
\lim_{K\to\infty}\limsup_{n\to\infty}\PR(X_{n,K}(t) > \varepsilon) = 0. 
\end{eq}
Using \eqref{tail-iterated-limit} and Lemma~\ref{lem:asymp-second-term}, it is now enough to deduce the scaling limit, as $n\to\infty$, for 
\begin{equation}\label{eq:truncated-process}
\bar{S}_n^K(t) = n^{-\rho}\sum_{i=1}^K \tilde{d}_i \mathcal{I}_i^n(tn^{\rho})- t
\end{equation} and then taking $K\to\infty$. 
But for any fixed $K\geq 1$, Lemma~\ref{lem::convergence_indicators} yields the limit of $S_n^K$, and the proof of Theorem~\ref{thm::convegence::exploration_process} follows.
\end{proof}

\begin{proof}[Proof of Lemma~\ref{lem::convergence_indicators}] 
By noting that $(\mathcal{I}_i^n(tn^\rho))_{t\geq 0}$ are indicator processes, for any $m_1\leq m_2\leq m_3$, it follows that  $\min\{\mathcal{I}_i^n(m_2)- \mathcal{I}_i^n(m_1), \mathcal{I}_i^n(m_3) - \mathcal{I}_i^n(m_2)\} = 0$, and thus  \cite[Theorem 13.5]{Bil99} implies tightness of  $(\mathcal{I}_i^n(tn^\rho) )_{t\geq 0, n\geq 1}$ for each fixed $i\geq 1$.
Thus, it is enough to show that 
\begin{equation}
 \probc{\mathcal{I}_i^n(t_in^{\rho})=0,\ \forall i\in [K]} \pto \probc{\mathcal{I}_i(t_i)=0,\ \forall i\in [K]} = \exp \Big( -\mu^{-1}\sum_{i=1}^{K} \theta_it_i\Big),
\end{equation} for any $t_1,\dots,t_K\in [0,\infty)$. Now, 
\begin{equation} \label{lem::eqn::expression1}
 \probc{\mathcal{I}_i^n(m_i)=0,\ \forall i\in [K]}=\prod_{l=1}^{\infty}\Big(1-\sum_{i\leq K:l\leq m_i}\frac{\td_i}{\tell_n-\Theta(l)} \Big).
\end{equation}Taking logarithms on both sides of \eqref{lem::eqn::expression1} and using the fact that $l\leq \max m_i=\Theta(n^{\rho})$ we get 
\begin{equation}\label{lem::eqn::ex1}
 \begin{split}
  \probc{\mathcal{I}_i^n(m_i)=0\, \forall i\in [K]}&= \exp\Big( - \sum_{l=1}^{\infty}\sum_{i\leq K:l\leq m_i} \frac{\td_i}{\tell_n}+o(1) \Big)= \exp\Big( -\sum_{i\in [K]} \frac{\td_im_i}{\tell_n} +o(1) \Big).
 \end{split}
\end{equation} Putting $m_i=t_in^{\rho}$, Assumption~\ref{assumption1}~\eqref{assumption1-1},~\eqref{assumption1-2} give
\begin{equation} \label{lem::eqn::expression2}
 \frac{m_i\td_i}{\tell_n}= \frac{\theta_it_i}{\mu} (1+\oP(1)).
\end{equation}
Hence \eqref{lem::eqn::ex1} and \eqref{lem::eqn::expression2}  complete the proof of Lemma \ref{lem::convergence_indicators}.
\end{proof}
\begin{proof}[Proof of Lemma~\ref{lem:asymp-second-term}]
Define $W_n(l) = \sum_{i\in [n]} \cI_i^n(l) -l$. 
Recall that $\sV_l$ denotes the set of vertices discovered up to time $l$, $\tau_k$ is the time when the $k$-th component has been explored, and $c_l$ is the number of components explored up to time $l$. 
Observe that
\begin{equation}\label{superm-deduction}
 \begin{split}
  \tilde{\E}[W_n(l+1)-W_n(l) \mid  \mathscr{F}_l]&= \sum_{i\in [n]} \tilde{\E}\big[\mathcal{I}^n_i(l+1)\mid  \mathscr{F}_l\big]\ind{i\notin \sV_l} -1 \\
  &= \sum_{i\notin \sV_l} \frac{\tilde{d}_i}{\tilde{\ell}_n-2l+2c_l+1} - 1 = \frac{2l-1- \sum_{i\in \sV_l} \td_i - 2c_l}{\tilde{\ell}_n-2l+2c_l+1}.
  \end{split}
 \end{equation} 
 To see that the final term in \eqref{superm-deduction} is negative, note that if $l= \tau_k$ for some $k$, then $\sum_{i\in \sV_{\tau_k}} \td_i -2\tau_k= 2k $, and $c_{\tau_k} = k$ so that  
 \begin{eq}\label{eq:ub-tauk-ub}
 2\tau_k - 1-\sum_{i\in \sV_{\tau_k}} \td_i - 2c_{\tau_k} = -1<0.
 \end{eq}
 If $\tau_{k}<l<\tau_{k+1}$, then $\sum_{i\in \sV_l\setminus\sV_{\tau_k}} \td_i -2(l-\tau_k)\geq -1$, and also $c_l = c_{\tau_k}+1$. Therefore, using \eqref{eq:ub-tauk-ub}, we conclude that the final term in \eqref{superm-deduction} is negative for all $l\geq 1$, and consequently,  $(W_n(l))_{l\geq 1}$ is a super-martingale.
We will use the martingale-inequality \cite[Lemma 2.54.5]{RW94} stating that for any sub/super-martingale $(M(t))_{t\geq 0}$, with $M(0)=0$, 
 \begin{equation}\label{eqn:supmg:ineq}
  \varepsilon \prob{\sup_{s\leq t}|M(s)|>3\varepsilon}\leq 3\expt{|M(t)|}\leq 3\left(|\expt{M(t)}|+\sqrt{\var{M(t)}}\right).
 \end{equation}
Using Taylor expansion, 
\begin{eq}
\tilde{\PR}(\cI_i^n(l)=1) \geq 1- \Big(1-\frac{\tilde{d}_i}{\tilde{\ell}_n}\Big)^l \geq \Big(\frac{l\tilde{d}_i}{\tilde{\ell}_n} - \frac{l^2\tilde{d}_i^2}{\tilde{\ell}_n^2}\Big)\ind{l\tilde{d}_i < \tell_n},
\end{eq}
and thus, using Lemma~\ref{lem:perc-degrees}, and $l = tn^{\rho}$, 
\begin{eq}\label{expt-bound-1}
n^{-\rho} |\tilde{\E}[W_n(tn^{\rho})]| &= t- n^{-\rho}\sum_{i\in [n]} \tilde{\PR}(\cI_i^n(tn^{\rho})=1) \\
&\leq t\sum_{i\in [n]} \frac{\td_i\ind{\td_i > \tell_n/tn^{\rho}}}{\tell_n}  + \frac{t^2n^{\rho} \sum_{i\in [n]}\tilde{d}_i^2}{\tilde{\ell}_n^2}. 
\end{eq} 
Let $\cE_n$ denote the good event that $\ell_n p_c(\lambda)\leq \tell_n\leq 2\ell_n p_c(\lambda)$ and $\td_i\leq 2p_c(\lambda) d_i$ for all $i$ such that $d_i>C_0n^{\rho}$ for some $C_0$ (sufficiently small). 
Using standard concentration inequalities for the binomial distribution \cite[Theorem 2.1]{JLR00}, $\PR(\cE_n^c) < 2\e^{-n^{\varepsilon}}$ for some $\varepsilon>0$. On the event $\cE_n$, $\td_i> C n^{\rho}$, and thus $d_i> C n^{\rho}$. 
We can bound 
\begin{eq}
\sum_{i\in [n]} \frac{\td_i\ind{\td_i > \tell_n/tn^{\rho}}}{\tell_n} \leq \frac{C_1}{\ell_n} \sum_{i\in [n]} d_i  \ind{d_i>C n^{\rho}} = o(1),
\end{eq}
where the final step follows using the uniform integrability from Assumption~\ref{assumption1}. 
The second term in \eqref{expt-bound-1} is $\oP(1)$ using Lemma~\ref{lem:perc-degrees}. Thus, 
\begin{eq}\label{expt-bound}
n^{-\rho} |\tilde{\E}[W_n(tn^{\rho})]| & = \oP(1). 
\end{eq} 
Next, note that for any $(x_1,x_2,\dots)$, $0\leq a+b \leq x_i$ and $a,b>0$ one has $\prod_{i=1}^R(1-a/x_i)(1-b/x_i)\geq \prod_{i=1}^R (1-(a+b)/x_i)$. Thus, for all $l\geq 1$ and $i\neq j$, 
  \begin{equation}\label{neg:correlation}
  \tilde{\PR}(\mathcal{I}_i^n(l)=0, \mathcal{I}_j^n(l)=0)\leq \tilde{\PR}(\mathcal{I}_i^n(l)=0)\tilde{\PR}(\mathcal{I}_j^n(l)=0),
  \end{equation} and thus
\begin{eq}\label{neg:correlation-2}
\tilde{\PR}(\mathcal{I}_i^n(l)=1, \mathcal{I}_j^n(l)=1) &= 1- \tilde{\PR}(\mathcal{I}_i^n(l)=0)-\tilde{\PR}(\mathcal{I}_j^n(l)=0)+ \tilde{\PR}(\mathcal{I}_i^n(l)=0, \mathcal{I}_j^n(l)=0) \\
&\leq 1- \tilde{\PR}(\mathcal{I}_i^n(l)=0)-\tilde{\PR}(\mathcal{I}_j^n(l)=0)+ \tilde{\PR}(\mathcal{I}_i^n(l)=0)\tilde{\PR}(\mathcal{I}_j^n(l)=0)\\
& =\tilde{\PR}(\mathcal{I}_i^n(l)=1)\tilde{\PR}( \mathcal{I}_j^n(l)=1).
\end{eq}  
Therefore $\mathcal{I}_i^n(l)$ and $\mathcal{I}^n_j(l)$ are negatively correlated. 
  Using \eqref{prob-ind-lb}, it follows that
\begin{eq}
   \mathrm{Var}(\mathcal{I}_i^n(l)\vert (\tilde{d}_i)_{i\in [n]})\leq  \tilde{\PR}(\mathcal{I}_i^n(l)=1) \leq \frac{l\td_i }{\tilde{\ell}_n(t)},
\end{eq}   
    uniformly over $l\leq tn^{\rho}$.
  Therefore, using  the negative correlation in \eqref{neg:correlation-2}, 
  \begin{equation} \label{variance-bound}
   \begin{split}
    n^{-2\rho}\mathrm{Var}\big(W_n(tn^{\rho})\big\vert (\tilde{d}_i)_{i\in [n]}\big)&\leq n^{-2\rho} \sum_{i\in [n]} \var{\mathcal{I}_i^n(tn^{\rho})\vert (\tilde{d}_i)_{i\in [n]}} \\
    &= n^{-2\rho} tn^{\rho} \frac{\sum_{i\in [n]}\tilde{d}_i}{\tilde{\ell}_n(t)}= \thetaP(n^{-\rho}) =\oP(1).
   \end{split}
  \end{equation}
Using \eqref{expt-bound} and \eqref{variance-bound}, the proof now follows by an application of \eqref{eqn:supmg:ineq}.
\end{proof}

\subsubsection{Large components are explored early} \label{sec:large-early}
In this section, we prove two key results that allow us to deduce the convergence of the component sizes. 
Firstly, we show that the rescaled vector of component sizes is tight in~$\ell^2_{\shortarrow}$ (see Proposition~\ref{prop:l2-tight}). 
This result is then used to show that the largest components of~$\mathcal{G}_n(p_c(\lambda))$ are explored before time $\Theta(n^{\rho})$ (Proposition~\ref{prop:large-comp-expl-early}). The latter allows us to apply Proposition~\ref{prop:conv-exc-length-area}.
Let $\sCil$ denote the $i$-th largest component for~$\mathcal{G}_n(p_c(\lambda))$.
Recall that our convention is to take $|\sC| =0$, if the component consists of one vertex and no edges.
\begin{proposition}\label{prop:l2-tight}
Under {\rm Assumption~\ref{assumption1}}, for any $\varepsilon>0$, 
\begin{equation}
\lim_{K\to\infty}\limsup_{n\to\infty}\PR\bigg(\sum_{i>K}|\sCil|^2>\varepsilon n^{2\rho}\bigg) = 0.
\end{equation}
\end{proposition}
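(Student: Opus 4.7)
The plan is to split the tail sum based on whether components contain one of the ``intermediate hubs'' $v_{K+1},\dots,v_{K_0}$ (for a truncation level $K_0\geq K$ to be chosen) or avoid all of the top $K_0$ hubs. Writing $v_j$ for the vertex of $j$-th largest degree in $\Mtilde{\bld{d}}$, and $V^{\leq K_0}:=\{v_1,\dots,v_{K_0}\}$, we have
\[
\sum_{i>K}|\sCil|^2 \ \leq\ \sum_{j=K+1}^{K_0}|\sC(v_j)|^2 \ +\ \sum_{\sC:\,\sC\cap V^{\leq K_0}=\emptyset}|\sC|^2,
\]
where $\sC(v_j)$ is the component of $v_j$ in $\cG_n(p_c(\lambda))$. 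The idea is to estimate the two terms separately and then send $n\to\infty$, then $K_0\to\infty$, and finally $K\to\infty$.

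For the first sum, my plan is to dominate the exploration started at $v_j$ by a two-phase process: the first phase pairs all $\td_j$ half-edges of $v_j$, and the second phase explores the resulting subtrees, each of which is controlled by a subcritical branching process in the residual graph with offspring mean at most $\tilde\nu_n^{(>K_0)}:=\sum_{i>K_0}\td_i(\td_i-1)/\sum_{i>K_0}\td_i$. By Lemma~\ref{lem:perc-degrees}, $\tilde\nu_n^{(>K_0)}$ can be made arbitrarily small in the iterated limit $\lim_{K_0\to\infty}\limsup_{n\to\infty}$. This should yield $\E[|\sC(v_j)|^2]\leq C\td_j^2$; since $\td_j^2\sim \theta_j^2 n^{2\rho}$, summing over $j=K+1,\dots,K_0$ and using $\bld{\theta}\in\ell^2_\shortarrow$ gives a contribution bounded by $Cn^{2\rho}\sum_{j>K}\theta_j^2$, which vanishes as $K\to\infty$.

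For the second sum, the components avoiding every top-$K_0$ hub live inside the residual configuration model on $V\setminus V^{\leq K_0}$, which is subcritical with parameter $\tilde\nu_n^{(>K_0)}\to 0$ in the iterated limit. A standard 2-point-function / path-counting estimate then gives
\[
\E\Big[\sum_{\sC\cap V^{\leq K_0}=\emptyset}|\sC|^2\Big]\ \leq\ \frac{C\tell_n}{1-\tilde\nu_n^{(>K_0)}}\ =\ O(n^{2\rho}).
\]
To sharpen this to $o(n^{2\rho})$, I would carefully separate the contributions of paths of length $\geq 1$ in the residual graph (which each carry a factor of $\tilde\nu_n^{(>K_0)}$ vanishing with $K_0$) from length-zero contributions (which, under the convention that isolated vertices have component size zero, are benign).

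The main obstacle is precisely this sharpening of the second estimate: the naive susceptibility is already of order $n^{2\rho}$, the same order as the target, so the argument needs a more refined path-counting in the residual graph that extracts an extra factor vanishing with $K_0$. This should rely on both the decay of $\tilde\nu_n^{(>K_0)}$ and the tail estimate \eqref{eq:unif-int-2nd-moment} on the original degree sequence.
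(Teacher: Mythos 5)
Your decomposition is plausible on its face, but both halves of the plan break down, and for reasons specific to this regime.

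First, the bound $\E[|\sC(v_j)|^2]\leq C\td_j^2$ is false. The subtrees hanging off the half-edges of $v_j$ are \emph{not} dominated by a branching process with offspring mean $\tilde{\nu}_n^{(>K_0)}$: the exploration from $v_j$ takes place in the full graph, where $v_j$ attaches directly to another hub $v_m$ with probability $1-\e^{-\lambda\theta_m\theta_j/\mu}+o(1)$, bounded away from $0$ --- this is exactly the ``emergence of hub connectivity'' that defines the critical window. Since $|\sC(v_1)|\geq c\,n^{\rho}$ whp, one gets $\E[|\sC(v_j)|^2]\geq c\,\theta_j n^{2\rho}$, which exceeds $\td_j^2\asymp\theta_j^2n^{2\rho}$ by a factor $1/\theta_j$; worse, $\sum_{j>K}\theta_j=\infty$, so even the corrected first moment of your first sum diverges as $K_0\to\infty$. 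Any workable argument must delete the hubs' incident edges before exploring (as the paper does with $\cG_n^{\sss K}$), not merely truncate the offspring mean.

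Second, the sharpening you identify as the main obstacle cannot be achieved, because $\sum_{\sC\cap V^{\leq K_0}=\emptyset}|\sC|^2$ genuinely is of order $n^{2\rho}$ with a constant that does not vanish with $K_0$. Asymptotically all retained half-edges sit on degree-one vertices (this is why the drift in \eqref{defn::limiting::process} is $-t$ rather than $-2t$), so the percolated graph contains $(1-\oP(1))\,\tell_n/2=\Theta(n^{2\rho})$ components consisting of a single edge, each contributing $4$ to the sum of squares; these survive the removal of any finite number of hubs. So the ``length-zero/length-one'' contributions are not benign under the stated convention (only degree-zero singletons are set to $0$), and no refinement of the two-point function estimate can make them so. The paper's proof never bounds $\sum_\sC|\sC|^2$ directly: after deleting the top-$K$ edges it passes to $4\sum_i D_i^{\sss K}\sum_{k\in\sC_{\sss(i)}^{\sss K}}(\td_k-1)$ and shows via size-biasing and path counting (Lemma~\ref{lem::tail_sum_squares}) that $\tE\big[\sum_{k\in\sC^K(V_n^{*,{\sss K}})}(\td_k-1)\big]\to0$ in the iterated limit; the vanishing factor you were hunting for is $\tE[\td_{\sss V_n^{*,{\sss K}}}-1]=\sum_{i>K}\td_i(\td_i-1)/\tell_n$, supplied by \eqref{eq:unif-int-2nd-moment}. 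The functional $\sum_{k\in\sC}(\td_k-1)$, unlike $|\sC|^2$, vanishes on single-edge components --- which also shows that the pointwise comparison $|\sC|^2\leq 4D_\sC\sum_{k\in\sC}(\td_k-1)$ used in the paper is itself problematic exactly for $|\sC|=2$ trees (the right side is $0$ there). Your instinct that the residual estimate is the crux was correct; the resolution is to change the functional being summed (or the convention on tiny components), not to do finer path counting in the residual graph.
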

Let $\mathcal{G}^{\sss K}_n$ be the random graph obtained by removing all edges attached to vertices $1,\dots,K$ and let $\boldsymbol{d}'$ be the obtained degree sequence. 
Further, let $\sC^{\sss K}(v)$ and $\sCi^{\sss K}$ denote the connected component containing $v$ and the $i$-th largest component respectively in $\mathcal{G}^{\sss K}_n$.
Let $D^{\sss K}(v) = \sum_{k\in \sC^{\sss K}(v)}\td_k$ and $D^{\sss K}_i = \sum_{k\in \sCi^{\sss K}}\td_k$. 
Let $V_n^{*,{\sss K}}$ be chosen according to the following size-biased distribution:
\begin{eq}
\PR(V_n^{*,{\sss K}} = i) = \frac{\td_i}{\tell_n - \sum_{i=1}^K \td_i}, \quad\text{for }i\in [n]\setminus[K].  
\end{eq}
Also, denote the criticality parameter of $\mathcal{G}^{\sss K}_n$ by $\nu_n^{\sss K}$.
\begin{lemma} \label{lem::tail_sum_squares} Suppose that  {\rm Assumption~\ref{assumption1}} holds.
Then, for any $\varepsilon>0$, 
\begin{eq}
\lim_{K\to\infty}\limsup_{n\to\infty}\PR\bigg(\tilde{\E}\bigg[\sum_{k\in \sC^K(V_n^{*,{\sss K}})}(\td_k-1)\bigg] > \varepsilon \bigg) = 0.
\end{eq}
\end{lemma}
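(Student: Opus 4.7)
The plan is to bound the conditional expectation via a Galton--Watson branching-process comparison for the exploration of $\sC^{\sss K}(V_n^{*,{\sss K}})$ inside $\mathcal{G}^{\sss K}_n$, and then to invoke the subcriticality of $\mathcal{G}^{\sss K}_n$ in the iterated limit $\lim_{K\to\infty}\limsup_{n\to\infty}$. Concretely, I would explore the component breadth-first inside $\mathcal{G}^{\sss K}_n$ by starting from $V_n^{*,{\sss K}}$, declaring its non-hub half-edges active, and sequentially pairing each active half-edge with a uniformly chosen unpaired non-hub half-edge. Since sampling is without replacement, the exploration is stochastically dominated by a Galton--Watson branching process whose root has the $\td$-size-biased degree on $[n]\setminus[K]$ and whose generation-$(\geq 1)$ vertices have the $\td'$-size-biased degree with one subtracted for the incoming half-edge; the per-vertex offspring mean at generations $\geq 1$ is the criticality parameter $\nu_n^{\sss K}$ of $\mathcal{G}^{\sss K}_n$. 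By \eqref{eq:perc-tail-sum-square} together with $\td_k' \leq \td_k$ and $\tell_n - S_K = (1+o_{\sss\PR}(1))\tell_n$ (where $S_K:=\sum_{i\leq K}\td_i$), this parameter satisfies $\lim_{K\to\infty}\limsup_{n\to\infty}\PR(\nu_n^{\sss K}>\varepsilon)=0$ for every $\varepsilon>0$.

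Next I would decompose the target sum as
\[
\sum_{k\in \sC^{\sss K}(V_n^{*,{\sss K}})}(\td_k-1) = \sum_{k\in \sC^{\sss K}(V_n^{*,{\sss K}})}(\td_k'-1) + H(\sC^{\sss K}(V_n^{*,{\sss K}})),
\]
with $H(\sC):=\sum_{k\in \sC}(\td_k-\td_k')$ the number of half-edges of $\sC$ paired with hubs in $\mathcal{G}_n(p_c(\lambda))$. A generation-by-generation expectation computation on the dominating Galton--Watson tree yields
\[
\tilde{\E}\Big[\textstyle\sum_{k\in \sC^{\sss K}(V_n^{*,{\sss K}})}(\td_k'-1)\Big] \leq \frac{C\,\nu_n^{\sss K}}{1-\nu_n^{\sss K}},
\]
which is $o_{\sss\PR}(1)$ on the event $\{\nu_n^{\sss K}<1/2\}$ in the iterated limit.

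For the hub-incidence term, observe that $H(\sC^{\sss K}(V_n^{*,{\sss K}}))$ is measurable with respect to $\mathcal{G}^{\sss K}_n$ (since $\td_k-\td_k'$ equals the number of half-edges of $k$ removed in passing to $\mathcal{G}^{\sss K}_n$). Decomposing over individual hub half-edges and using that each such half-edge is paired with a uniformly chosen other half-edge, the probability that the partner lies in $\sC^{\sss K}(V_n^{*,{\sss K}})$ is at most $\tilde{\E}[D(\sC^{\sss K}(V_n^{*,{\sss K}}))]/\tell_n$ up to lower-order sampling-without-replacement corrections, where $D(\sC):=\sum_{k\in\sC}\td_k$. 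Combined with the branching-process bound $\tilde{\E}[D(\sC^{\sss K}(V_n^{*,{\sss K}}))]=O(1/(1-\nu_n^{\sss K}))$ and the estimate $S_K=\Theta_{\sss\PR}(n^{\rho})$ under Assumption~\ref{assumption1}, this gives
\[
\tilde{\E}[H(\sC^{\sss K}(V_n^{*,{\sss K}}))] \leq \frac{C\,S_K}{\tell_n(1-\nu_n^{\sss K})} = O_{\sss\PR}(n^{-\rho}) = o_{\sss\PR}(1).
\]
Combining the two bounds and applying Markov's inequality to the (random) conditional expectation yields the claim.

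The main technical obstacle is making the branching-process domination rigorous --- particularly, bounding $\tilde{\E}[\sum(\td_k'-1)]$ above by the corresponding Galton--Watson expectation --- because the CM exploration samples without replacement while the BP samples with replacement, and because the root is $\td$-size-biased rather than $\td'$-size-biased. A secondary subtlety is that $H(\sC^{\sss K}(V_n^{*,{\sss K}}))$ is not directly captured by any exploration inside $\mathcal{G}^{\sss K}_n$, but succumbs to the size-biased rewriting above thanks to the small total hub-half-edge budget $S_K=\Theta_{\sss\PR}(n^{\rho}) \ll \tell_n = \Theta_{\sss\PR}(n^{2\rho})$.
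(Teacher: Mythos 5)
Your strategy is sound and its skeleton matches the paper's: remove the first $K$ hubs, use \eqref{eqn:nu-K} and \eqref{eq:perc-tail-sum-square} to make $\nu_n^{\sss K}$ small in the iterated limit, bound the size-biased expected weighted component size by a geometric series in $\nu_n^{\sss K}$, and note that the size-biased "endpoint weight" $\tilde{\E}[\td_{V_n^{*,{\sss K}}}-1]$ vanishes. The implementation differs in two ways. First, where you invoke a Galton--Watson domination (and correctly flag its rigorization as the main obstacle), the paper instead uses Janson's path-counting lemma \cite[Lemma 5.1]{J09b} in Appendix~\ref{sec:appendix-path-counting}: since only an upper bound on a conditional \emph{expectation} is needed, one sums the exact without-replacement path probability $d'_{x_0}d'_{x_l}\prod_i d'_{x_i}(d'_{x_i}-1)/((\ell_n'-1)\cdots(\ell_n'-2l+1))$ over paths of length $l\leq(\log n)^2$ and kills the tail $l>(\log n)^2$ with the diameter estimate \eqref{diam-comput}; this sidesteps any coupling between with- and without-replacement sampling and also absorbs the fact that the root is $\td$-size-biased rather than $d'$-size-biased. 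Second, your decomposition into the $d'$-part plus the hub-incidence term $H(\sC)$ is avoidable: the paper keeps the weight $\td_k-1$ on the path endpoint and uses $\sum_k d_k'(\td_k-1)\leq\sum_{k>K}\td_k(\td_k-1)$ inside the path-counting sum, which yields the vanishing factor directly. Your treatment of $H$ does work out at the level of orders of magnitude ($S_K=\Theta_{\sss\PR}(n^{\rho})\ll\tell_n$), but the step "each hub half-edge hits $\sC^{\sss K}(V_n^{*,{\sss K}})$ with probability at most $\tilde{\E}[D(\sC^{\sss K}(V_n^{*,{\sss K}}))]/\tell_n$" glosses over the dependence between the hub pairings (which determine $\bld{d}'$ and hence $\mathcal{G}_n^{\sss K}$) and the component itself; to make it rigorous you should condition on $\bld{d}'$ first and bound $\sum_k(\td_k-d_k')\tilde{\PR}(k\in\sC^{\sss K}(V_n^{*,{\sss K}})\mid\bld{d}')$ via the two-point connection bound $\tilde{\PR}(v\leadsto k)\leq Cd_v'd_k'/(\ell_n'(1-\nu_n^{\sss K}))$, which is again the path-counting lemma. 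So the one tool you are missing is precisely the tool that also closes your acknowledged gap.
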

\begin{proof}
Note that the criticality parameter of  $\mathcal{G}_n(p_c(\lambda))$ is $\tilde{\nu}_n = \lambda(1+\oP(1))$, by Lemma~\ref{lem:perc-degrees}.
Now, conditionally on the set of removed half-edges, $\mathcal{G}^{\sss K}_n$ is still a configuration model with some degree sequence $\boldsymbol{d}'$ with $d_i'\leq \td_i$ for all $i\in [n]\setminus [K]$ and $d_i'=0$ for $i\in [K]$.
Further, the criticality parameter of $\mathcal{G}^{\sss K}_n$ satisfies 
 \begin{equation}\label{eqn:nu-K}
  \begin{split}
   \nu^{\sss K}_n&= \frac{\sum_{i\in [n]} d_i'(d'_i-1)}{\sum_{i\in [n]} d_i'}\leq \frac{\sum_{i>K}\tilde{d}_i(\tilde{d}_i-1)}{\tilde{\ell}_n-2\sum_{i=1}^K\tilde{d}_i} = \lambda \frac{\sum_{i>K}\tilde{d}_i(\tilde{d}_i-1)}{\sum_{i\in [n]}\tilde{d}_i(\tilde{d}_i-1)}(1+\oP(1)),
  \end{split}
 \end{equation}
 where we have used $\tilde{\nu}_n = \lambda(1+\oP(1))$ in the last step. 
 Now, by Assumption~\ref{assumption1} and Lemma~\ref{lem:perc-degrees}, it is possible to choose $K_0$ large such that for all  $K\geq K_0$
 \begin{equation} \label{eq:fact-K-large-nu}
 \nu_n^{\sss K}<1 \quad \text{with high probability}.
 \end{equation} 
This yields
 \begin{eq}\label{path-count-ub-main}
 \tilde{\E} \bigg[\sum_{k\in \sC^K(V_n^{*,{\sss K}})}(\td_k-1)\bigg] \leq \tilde{\E} [\td_{V_n^{*,{\sss K}}}-1]\Big(1+  \frac{\tilde{\E}[\td_{\sss V_n^{*,{\sss K}}}]}{(1-\nu_n^{\sss K})}+\oP(1)\Big),
 \end{eq}
where $\td_{\sss V_n^{*,{\sss K}}}$ is the degree of the vertex $V_n^{*,{\sss K}}$ in $\cG_n(p_c(\lambda))$.
 The proof of \eqref{path-count-ub-main} uses path-counting techniques for the configuration model \cite{J09b}.  
Since the arguments are adaptations of \cite{DHLS16}, we move the proof to Appendix~\ref{sec:appendix-path-counting}.
We now use Lemma~\ref{lem:perc-degrees} to compute the asymptotics of the different terms in \eqref{path-count-ub-main}. 
Note that $\tilde{\E}[\td_{\sss V_n^{*,{\sss K}}}] \leq (1+\oP(1))\sum_{i>K}\tilde{d}_i^2/\tilde{\ell}_n = \OP(1)$, and 
\begin{equation}
\tilde{\E}[\td_{\sss V_n^{*,{\sss K}}}-1] = 
\frac{\sum_{i>K}\td_i(\td_i-1)}{\tilde{\ell}_n-\sum_{i=1}^K\tilde{d}_i} = (1+\oP(1)) \frac{p_n\sum_{i>K}d_i(d_i-1)}{\sum_{i\in [n]}d_i} \pto 0,
\end{equation}in the iterated limit $\lim_{K\to\infty} \lim_{n\to\infty}$.
Thus the proof of Lemma~\ref{lem::tail_sum_squares} follows.
\end{proof}

\begin{proof}[Proof of Proposition~\ref{prop:l2-tight}]
Recall that  $\sCi^{\sss K}$ denotes the $i$-th largest component in $\mathcal{G}^{\sss K}_n$ and $D^{\sss K}_i = \sum_{k\in \sCi^{\sss K}}\td_k$. 
Denote by $\mathscr{S}_K$, the squared sum of the component sizes after removing components containing $1,\dots,K$. 
Note that 
\begin{eq}\label{tight-ub-K}
\sum_{i>K} |\sCi|^2 &= \sum_{i\geq 1} |\sCi|^2-\sum_{i=1}^K |\sCi|^2 \leq \mathscr{S}_K \leq \sum_{i\geq 1} |\sCi^{\sss K}|^2 \leq 4\sum_{i\geq 1} D_i^{\sss K}\sum_{k\in \sC_{\sss (i)}^{\sss K}}(\td_k-1),
\end{eq}
where the last step uses $d_i'\leq \td_i$ and the fact that for any connected component  $\sC$ with total degree $D$, we must have $D-|\sC|\geq |\sC|/4$. 
The last fact can be seen for $|\sC|\geq 2$ by 
$D-|\sC| \geq 2(|\sC| -1) -|\sC| = |\sC| -2 \geq |\sC|/4$, and for $|\sC| = 1$ and $D\geq 2$, this follows trivially.
Note here that we do not consider components with $|\sC| = 1$ and $D=0$; see Remark~\ref{rem:isolated}.
Thus it is enough to bound the final term in \eqref{tight-ub-K}. 
Now, 
\begin{eq}\label{eq:ub-degre-moment}
\tilde{\PR}\bigg(\sum_{i\geq 1} D_i^{\sss K}\sum_{k\in \sC_{\sss (i)}^{\sss K}}(\td_k-1) >\varepsilon n^{2\rho}\bigg)&\leq \frac{1}{\varepsilon n^{2\rho}}\tilde{\E}\bigg[\sum_{i\geq 1} D_i^{\sss K}\sum_{k\in \sC_{\sss (i)}^{\sss K}}(\td_k-1)  \bigg] \\
&= \frac{\tell_n-\sum_{i\in [K]} \td_i}{\varepsilon n^{2\rho} }\tilde{\E}\bigg[\sum_{k\in \sC^K(V_n^{*,{\sss K}})}(\td_k-1)\bigg].
\end{eq}
Thus, the proof follows using Lemma~\ref{lem::tail_sum_squares}, and the fact that $\tell_n-\sum_{i\in [K]} \td_i \leq \tilde{\ell}_n = \OP(n^{2\rho})$.
\end{proof}
The next proposition shows that, in Algorithm~\ref{algo-expl}, the large components are explored before time $\Theta(n^{\rho})$.
Let $\sC_{\max}^{\sss \geq T}$ denote the size of the largest component whose exploration is started by Algorithm~\ref{algo-expl} after time $Tn^{\rho}$, and let $D_{\max}^{\sss \geq T} = \sum_{k\in \sC_{\max}^{\sss \geq T}}\td_k$. 
\begin{proposition}\label{prop:large-comp-expl-early}
 Under {\rm Assumption~\ref{assumption1}}, for any $\varepsilon>0$, 
\begin{equation}
\lim_{T\to\infty}\limsup_{n\to\infty}\PR\big(|\sC_{\max}^{\sss \geq T}|>\varepsilon n^{\rho}\big) = 0 \quad\text{and} \quad  \lim_{T\to\infty}\limsup_{n\to\infty}\PR\big(D_{\max}^{\sss \geq T}>\varepsilon n^{\rho}\big) = 0.
\end{equation}
\end{proposition}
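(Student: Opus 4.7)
The plan combines two observations. First, in the iterated limit $\lim_{T\to\infty}\limsup_{n\to\infty}$, every hub $i\in[K]$ is discovered by Algorithm~\ref{algo-expl} before time $Tn^\rho$. Second, in the iterated limit $\lim_{K\to\infty}\limsup_{n\to\infty}$, the largest component of the hub-deleted graph $\mathcal{G}^{\sss K}_n$ that avoids $[K]$ has degree-sum $\oP(n^\rho)$. A component $\sC$ of $\mathrm{CM}_n(\bld{d},p_c(\lambda))$ (distributed as $\mathcal{G}_n(p_c(\lambda))$) containing no vertex of $[K]$ is identical, as a vertex and edge set, to a component of $\mathcal{G}^{\sss K}_n$, because deleting edges incident to $[K]$ cannot affect a component that has no vertex of $[K]$; hence, on the good event from the first observation, any component explored after time $Tn^\rho$ is controlled by the quantity in the second.

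For the first observation, set $\cE_{K,T,n}:=\{\mathcal{I}_i^n(Tn^\rho)=1,\ \forall i\in[K]\}$. Lemma~\ref{lem::convergence_indicators} evaluated at the continuity point $T$ yields
\begin{eq}
\limsup_{n\to\infty}\PR\big(\cE_{K,T,n}^c\big)\le \PR\big(\exists\,i\in[K]:\xi_i>T\big)\le \sum_{i=1}^K\e^{-T\theta_i/\mu},
\end{eq}
which tends to $0$ as $T\to\infty$ for each fixed $K$. For the second, rewriting the size-biased expectation from Lemma~\ref{lem::tail_sum_squares} as a sum over components of $\mathcal{G}^{\sss K}_n$ avoiding $[K]$ gives the identity
\begin{eq}
\varepsilon_n^{\sss K}\;:=\;\tilde{\E}\Big[\sum_{k\in\sC^{\sss K}(V_n^{*,{\sss K}})}(\td_k-1)\Big]\;=\;\frac{1}{\tell_n-\sum_{i=1}^K\td_i}\sum_{\sC}D^{\sss K}_\sC\,\big(D^{\sss K}_\sC-|\sC|\big).
\end{eq}
For any component with $D^{\sss K}_\sC\ge 3$, connectivity of $\sC$ in $\mathcal{G}^{\sss K}_n$ forces $D^{\sss K}_\sC\ge \sum_{k\in\sC}\td_k'\ge 2(|\sC|-1)$ (with $\td_k'$ the $\mathcal{G}^{\sss K}_n$-degree), and an elementary case analysis then gives $D^{\sss K}_\sC-|\sC|\ge D^{\sss K}_\sC/4$. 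Summing yields $\sum_{\sC:D^{\sss K}_\sC\ge 3}(D^{\sss K}_\sC)^2\le 4\varepsilon_n^{\sss K}\tell_n$, and since $\tell_n=\Theta_\PR(n^{2\rho})$ while $\varepsilon_n^{\sss K}=\oP(1)$ in the iterated limit by Lemma~\ref{lem::tail_sum_squares}, Markov's inequality gives $\max_{\sC}D^{\sss K}_\sC=\oP(n^\rho)$; components with $D^{\sss K}_\sC\le 2$ are trivially negligible.

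To conclude, fix $\delta>0$. On $\cE_{K,T,n}$ one has $|\sC_{\max}^{\sss\ge T}|\le D_{\max}^{\sss\ge T}\le \max_{\sC}D^{\sss K}_\sC$, the first inequality using $|\sC|\le D^{\sss K}_\sC$ for every component that is not excluded by the isolated-vertex convention of Remark~\ref{rem:isolated}. Hence
\begin{eq}
\limsup_{n\to\infty}\PR\big(|\sC_{\max}^{\sss\ge T}|>\delta n^\rho\big)\le \limsup_{n\to\infty}\PR\big(\cE_{K,T,n}^c\big)+\limsup_{n\to\infty}\PR\Big(\max_{\sC}D^{\sss K}_\sC>\delta n^\rho\Big),
\end{eq}
and sending $T\to\infty$ (for fixed $K$) kills the first term by the display in the second paragraph, after which $K\to\infty$ kills the second. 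The same chain with $D_{\max}^{\sss\ge T}$ in place of $|\sC_{\max}^{\sss\ge T}|$ yields the second statement. The main obstacle is the passage from Lemma~\ref{lem::tail_sum_squares}'s average-type control of $\varepsilon_n^{\sss K}$ to a sup-type bound on $\max_{\sC}D^{\sss K}_\sC$: this is engineered via the inequality $D^{\sss K}_\sC-|\sC|\ge D^{\sss K}_\sC/4$ and a careful identification of the finite set of small degenerate components where it fails.
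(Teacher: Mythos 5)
Your proof is correct and follows essentially the same route as the paper's: condition on the event that the hubs $[K]$ are discovered before time $Tn^{\rho}$ (so that any component whose exploration starts later avoids $[K]$ and coincides with a component of $\mathcal{G}^{\sss K}_n$), and then control those components via the size-biased expectation of Lemma~\ref{lem::tail_sum_squares}. The only cosmetic differences are that the paper bounds the hub-discovery event by a direct geometric computation rather than via Lemma~\ref{lem::convergence_indicators}, and that it passes through $\sum_{i}|\mathscr{C}_{\sss (i)}^{\sss K}|^2$ using $D-|\mathscr{C}|\geq|\mathscr{C}|/4$ (handling $D_{\max}^{\sss \geq T}$ by a separate short argument) instead of your unified bound via $D-|\mathscr{C}|\geq D/4$.
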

\begin{proof}
Define $\mathscr{A}_{\sss K,T}^n:= \{\text{all the vertices of }[K] \text{ are explored before time }Tn^{\rho}\}.$ Let $\mathscr{C}_{\sss (i)}^{\sss K}$  denote the $i$-th largest component of $\cG^{\sss K}_n$ so that 
 \begin{eq}\label{eq:CgeqT1}
  \probc{|\mathscr{C}_{\max}^{\sss \geq T}|>\varepsilon n^\rho, \ \mathscr{A}_{\sss K,T}^n}&\leq \tilde{\PR}\bigg(\sum_{i\geq 1}\big|\mathscr{C}_{\sss (i)}^{\sss K}\big|^2> \varepsilon^2 n^{2\rho}\bigg) \\
  &\leq \tilde{\PR}\bigg(\sum_{i\geq 1} D_i^{\sss K}\sum_{k\in \sC_{\sss (i)}^{\sss K}}(\td_k-1) >\frac{\varepsilon^2 n^{2\rho}}{4}\bigg).
 \end{eq}
The final term tends to zero in probability in the iterated limit $\lim_{K\to\infty}\limsup_{n\to\infty}$, as shown in \eqref{eq:ub-degre-moment}. 
 Next, using the fact that $\tilde{d}_jn^{\rho}=\Theta(\tilde{\ell}_n)$, we get 
 \begin{equation}\label{eq:CgeqT2}
  \begin{split} &\probc{(\mathscr{A}_{\sss K,T}^{n})^c}=\probc{\exists j\in [K]: j \text{ is not explored before }Tn^{\rho}}\\
  &\leq \sum_{j=1}^K \probc{j \text{ is not explored before }Tn^{\rho}}\leq \sum_{j=1}^K\left(1-\frac{\tilde{d}_j}{\tilde{\ell}_n-\Theta(Tn^{\rho})} \right)^{Tn^{\rho}}\leq \sum_{j=1}^K \e^{-CT},
  \end{split}
 \end{equation} where $C>0$ is a constant that may depend on $K$, and the final step holds with high probability. 
 Now, by \eqref{eq:CgeqT1},
 \begin{equation}
  \probc{|\mathscr{C}_{\max}^{\sss \geq T}|>\varepsilon n^{\rho}}\leq \tilde{\PR}\bigg(\sum_{i\geq 1}\big|\mathscr{C}_{\sss (i)}^{\sss K}\big|^2> \varepsilon^2 n^{2\rho}\bigg) + \probc{(\mathscr{A}_{\sss K,T}^{n})^c}.
 \end{equation} The proof for $\PR\big(|\sC_{\max}^{\sss \geq T}|>\varepsilon n^{\rho}\big)$ follows by taking the iterated limit $\lim_{K\to\infty}\lim_{T\to\infty}\limsup_{n\to\infty}$.

For the upper bound on $\probc{D_{\max}^{\sss \geq T}>\varepsilon n^\rho, \ \mathscr{A}_{\sss K,T}^n}$, note that 
\begin{eq}
\probc{D_{\max}^{\sss \geq T}>\varepsilon n^\rho, |\mathscr{C}_{\max}^{\sss \geq T}|\leq \varepsilon n^{\rho}/2, \ \mathscr{A}_{\sss K,T}^n}&\leq \probc{D_{\max}^{\sss \geq T}(D_{\max}^{\sss \geq T}-|\mathscr{C}_{\max}^{\sss \geq T}|)>\varepsilon^2 n^{2\rho}/2, \ \mathscr{A}_{\sss K,T}^n} \\
  &\leq \tilde{\PR}\bigg(\sum_{i\geq 1} D_i^{\sss K}\sum_{k\in \sC_{\sss (i)}^{\sss K}}(\td_k-1) >\frac{\varepsilon^2 n^{2\rho}}{2}\bigg).
\end{eq}
Hence, the proof for $\PR\big(D_{\max}^{\sss \geq T}>\varepsilon n^{\rho}\big)$ also follows.
\end{proof}
\subsubsection{Counting process that counts surplus}\label{sec:surplus-fd}
Let $N_n^\lambda(k)$ be the number of surplus edges discovered up to time $k$ and $\bar{N}^\lambda_n(u) = N_n^\lambda(\lfloor un^\rho \rfloor)$. 
Below, we prove the asymptotics for the process $\bar{\mathbf{N}}^\lambda_n$: 
\begin{lemma} \label{lem:surp:poisson-conv} 
Under {\rm Assumption~\ref{assumption1}}, as $n\to\infty$,
 \begin{equation}\label{eq:limit-joint-comp-sp}
 (\bar{\mathbf{S}}_n,\bar{\mathbf{N}}_n^\lambda)\dto (\mathbf{S}_{\infty}^\lambda,\mathbf{N}^\lambda),
 \end{equation} where $\mathbf{N}^\lambda$ is defined in \eqref{defn::counting-process}.
 \end{lemma}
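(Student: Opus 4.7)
The plan is to identify $\bld{N}_n^\lambda$ as a counting process with respect to the exploration filtration $(\mathscr{F}_k^n)_k$, compute its predictable compensator, show that the compensator converges jointly with $\bar{\bld{S}}_n$ to a continuous limit, and then invoke a functional limit theorem for counting processes.

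To compute the compensator, note that at step $k$ of Algorithm~\ref{algo-expl} in which an actual pairing is performed, the active half-edge $e$ of the exploring vertex is paired with one of the remaining unpaired half-edges uniformly at random; this yields a surplus edge iff the partner belongs to an already discovered vertex and is not $e$. The key identity is that $\refl{S_n(k)}$ equals exactly the number of unpaired half-edges at discovered vertices: the $-2$ adjustments charged at each component-start in \eqref{defn:exploration:process} are precisely cancelled by $\inf_{j\le k}S_n(j) = -2c_k$, where $c_k$ is the number of components explored up to time $k$. Consequently,
\[
q_k := \tilde{\PR}\big(\Delta N_n^\lambda(k) = 1 \,\big|\, \mathscr{F}_{k-1}^n\big) = \frac{\refl{S_n(k-1)}-1}{\tell_n - 2(k-1-c_{k-1}) - 1},
\]
with $q_k=0$ at component-start steps, and the predictable compensator of $\bld{N}_n^\lambda$ is $A_n(m) := \sum_{k=1}^m q_k$.

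To prove $A_n(\lfloor \cdot\, n^\rho \rfloor) \dto A$ jointly with $\bar{\bld{S}}_n \dto \biS$, where $A(u) := \frac{\|\bld{\theta}\|_2^2}{\lambda \mu^2}\int_0^u \refl{\iS(s)}\,\dif s$, I would argue as follows. Lemma~\ref{lem:perc-degrees} together with the computations following Assumption~\ref{assumption1} gives $n^{-2\rho}\tell_n \pto \lambda\mu^2/\|\bld{\theta}\|_2^2$, and since $c_{k-1}\le k \le un^\rho = o(n^{2\rho})$, the denominator of $q_k$ equals $\tell_n(1+\oP(1))$ uniformly in $k$ on compact $u$-intervals. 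The reflection map $f\mapsto f-\ubar{f}$ is continuous on $\mathbb{D}_+[0,\infty)$ (by \eqref{continuity-ubar-f} together with continuity of the infimum map in $J_1$), so Theorem~\ref{thm::convegence::exploration_process} and the continuous mapping theorem yield $\refl{\bar{\bld{S}}_n} \dto \refl{\biS}$ jointly with $\bar{\bld{S}}_n$. A Riemann-sum approximation then gives, jointly with $\bar{\bld{S}}_n$,
\[
A_n(\lfloor un^\rho \rfloor) = \frac{n^{2\rho}}{\tell_n} \int_0^u \refl{\bar{S}_n(s)}\,\dif s + \oP(1) \dto A(u),
\]
uniformly on compact $u$-intervals, and the limit $A$ is continuous as the integral of a c\`adl\`ag function.

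Finally I would upgrade compensator convergence to convergence of the counting process itself. Because $q_k = O(n^{-\rho})$ uniformly for $k \le un^\rho$, one has $\sum_{k\le un^\rho} q_k^2 = \oP(1)$, placing the conditional Bernoulli increments squarely in the Poisson regime. A standard functional Poisson limit theorem for counting processes with convergent continuous predictable compensators---equivalently, a time-change representation $\bar N_n^\lambda(u) = \Pi_n(A_n(\lfloor un^\rho\rfloor))$ for a unit-rate Poisson process $\Pi_n$ coupled to the uniform pairings---then gives $\bar{\bld{N}}_n^\lambda \dto \bld{N}^\lambda$, where $\bld{N}^\lambda$ is the counting process with compensator $A$; this coincides with the process of~\eqref{defn::counting-process}. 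The main obstacle is to carry out this last step while preserving the joint law with $\bar{\bld{S}}_n$. A clean route is to apply Skorokhod's representation theorem to realize $\bar{\bld{S}}_n \to \biS$ almost surely in $J_1$, upgrade to $A_n \to A$ a.s. uniformly on compacts via continuity of the reflection and integration maps, and then couple both counting processes to a common unit-rate Poisson process $\Pi$ via $\Pi \circ A_n$ and $\Pi\circ A$; continuity of $A$ yields $\Pi\circ A_n \to \Pi\circ A$ in $J_1$ almost surely, which completes the proof. The remaining bookkeeping---the $-1$ and $c_k$ corrections in $q_k$, boundary effects near $u=0$, and uniform control of the Riemann-sum error---is routine and absorbed into $\oP(1)$ terms.
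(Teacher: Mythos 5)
Your computation of the conditional surplus probability and of the compensator, and the convergence $A_n(\lfloor \cdot\, n^\rho\rfloor)\dto \frac{\|\bld{\theta}\|_2^2}{\lambda\mu^2}\int_0^{\cdot}\refl{\iS(s)}\,\dif s$ jointly with $\bar{\mathbf{S}}_n$, are correct and match the first part of the paper's argument (see \eqref{prob-increment-surplus}--\eqref{eqn:intensity}). The gap is in the final step. For the configuration model, $\mathbf{N}_n^\lambda$ is \emph{not} extra randomness layered on top of $\mathbf{S}_n$: a surplus edge is created at step $k$ exactly when the exploration walk makes a $-2$ increment at a pairing step, so $\mathbf{N}_n^\lambda$ is a deterministic functional of the path of $\mathbf{S}_n$. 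Consequently the representation $\bar N_n^\lambda = \Pi\circ A_n$ with a unit-rate Poisson process $\Pi$ that is common to (or independent of) $\bar{\mathbf{S}}_n$ does not reproduce the joint law of $(\bar{\mathbf{S}}_n,\bar{\mathbf{N}}_n^\lambda)$; it builds in a conditional independence that fails in the prelimit. A Brown/Jacod--Shiryaev-type theorem from compensator convergence gives you the marginal limit of $\bar{\mathbf{N}}_n^\lambda$ (a Cox process), but not the joint convergence to the pair $(\biS,\mathbf{N}^\lambda)$ in which $\mathbf{N}^\lambda$ is conditionally Poisson given $\biS$ --- which is precisely what the lemma asserts and what is needed downstream to pair excursion lengths with surplus counts. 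The paper flags exactly this: for Erd\H{o}s--R\'enyi and rank-one models the surplus edges can be sprinkled independently after observing the exploration, but not here.

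The paper's actual proof (Appendix~\ref{appendix-surplus}) resolves this by introducing the process $\mathbf{S}_n'$ obtained from $\mathbf{S}_n$ by deleting the $-2$ surplus steps, showing $\mathrm{d}_{J_1,T}(\bar{\mathbf{S}}_n',\bar{\mathbf{S}}_n)\pto 0$, and then computing \emph{exactly} the conditional distribution of the number and the locations of the surplus steps given the path of $\mathbf{S}_n'$ (a ratio of products of active half-edge counts over $\tell_n$). This conditional law is genuinely nondegenerate, and one shows it converges to that of a mixed Poisson number of points placed with density proportional to $\refl{\iS(u)}$, which yields the claimed joint limit. To repair your argument you would need to supply this conditional-law computation (or an equivalent decoupling device); the tightness bound \eqref{1d-tight-surplus} and the compensator asymptotics you derived are ingredients of it, but they do not by themselves close the argument.
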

\begin{proof}
  We write $
N_n^{\lambda}(l)=\sum_{i=2}^l \xi_i$,
where $\xi_i=\ind{\mathscr{V}_i=\mathscr{V}_{i-1}}$. 
Let $A_i$ denote the number of active half-edges after stage $i$ while implementing Algorithm~\ref{algo-expl}. Note that 
\begin{equation}\label{prob-increment-surplus}
 \probc{\xi_i=1\mid \mathscr{F}_{i-1}}=\frac{A_{i-1}-1}{\tilde{\ell}_n-2i-1}= \frac{A_{i-1}}{\tilde{\ell}_n}(1+O(i/\tell_n))+O(\tell_n^{-1}),
\end{equation} uniformly for $i\leq Tn^{\rho}$ for any $T>0$. 
By Lemma~\ref{lem:perc-degrees}, $\tilde{\ell}_n = \ell_n p_c(\lambda) (1+\oP(1))= n^{2\rho} \lambda \mu^2/ \sum_i \theta_i^2(1+\oP(1))$.
Therefore, the instantaneous rate of change of the re-scaled process $\bar{\mathbf{N}}^{\lambda}$ at time $t$, conditional on the past, is 
\begin{equation}\label{eqn:intensity}
 n^{\rho}\frac{A_{\floor{tn^\rho}}}{n^{2\rho}\frac{\lambda\mu^2}{\sum_{i\geq 1}\theta_i^2}}\left( 1+\oP(1)\right) +\oP(1)= \frac{\sum_{i\geq 1}\theta_i^2}{\lambda \mu^2}\refl{\bar{S}_n(t)}\left( 1+\oP(1)\right) +\oP(1).
\end{equation}
Since the reflection of a process is continuous in Skorohod $J_1$-topology (see \cite[Lemma 13.5.1]{W02}), we can use Theorem~\ref{thm::convegence::exploration_process}  to conclude that $\mathrm{refl}(\bar{\mathbf{S}}_n)  \xrightarrow{\sss d} \mathrm{refl} (\mathbf{S}^\lambda_\infty )$, so that the compensator of 
$\bar{\mathbf{N}}_n^\lambda$ converges. 
The convergence of the compensators is usually enough for convergence of Poisson processes.
Indeed, for Erd\H{o}s-R\'enyi random graphs \cite{A97} or rank-one inhomogeneous random graphs \cite{BHL10,BHL12}, showing the convergence of compensators suffices using \cite[Theorem 1]{Bro81}. This is because the surplus edges can be added independently after we have observed the whole exploration process. However, this is not true for the configuration model because the surplus edges occur precisely at places with jumps $-2$.
This difficulty was circumvented in \cite{DHLS16} for the $\tau\in (3,4)$ regime.
In Appendix~\ref{appendix-surplus}, we adapt the arguments from \cite{DHLS16} in the $\tau\in (2,3)$ setting, which completes the proof of Lemma~\ref{lem:surp:poisson-conv}. 
 \end{proof}

\subsubsection{Convergence of the component sizes and the surplus edges} \label{sec:conv-surp-comp-size}
We first show the asymptotics of the component sizes and surplus edges of $\mathcal{G}_n(p_c(\lambda))$ generated by  Algorithm~\ref{algo-alt-cons-perc}. 
Recall that $\mathrm{SP}(\mathscr{C})$ denotes the number of  surplus of $\mathscr{C}$. 
The following lemma states the tightness of the  vector of component sizes and surplus edges of $\mathcal{G}_n(p_c(\lambda))$ in the $\Unot$-topology:
\begin{lemma}\label{lem-surp-u-0} For any $\varepsilon >0$,
 \begin{equation}
 \lim_{\delta\to 0}\limsup_{n\to\infty} \PR\bigg(\sum_{i: |\mathscr{C}_{(i)}|\leq \delta n^{\rho} }|\mathscr{C}_{\sss(i)}|\times \surp{\mathscr{C}_{\sss(i)}}> \varepsilon n^{\rho}\bigg)=0.
 \end{equation}
\end{lemma}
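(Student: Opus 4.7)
The plan is to re-express the sum as a sum over steps of Algorithm~\ref{algo-expl} and split by exploration time. Writing $\tilde{\sC}_l$ for the component currently being explored at step $l$ and $\xi_l=\ind{\text{surplus edge discovered at step }l}$, the identity
\begin{equation*}
\sum_{i:|\sCi|\leq \delta n^\rho}|\sCi|\surp{\sCi}=\sum_{l\geq 1}\xi_l\,|\tilde{\sC}_l|\,\ind{|\tilde{\sC}_l|\leq \delta n^\rho}
\end{equation*}
holds because each component $\sC$ being explored over steps $l\in(\tau_{k-1},\tau_k]$ contributes $|\sC|$ to the target sum for every surplus step in its exploration interval. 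Decompose the right-hand side into $\Sigma_1$ (steps $l\leq Tn^\rho$) and $\Sigma_2$ (steps $l>Tn^\rho$).

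For $\Sigma_1$, the trivial bound $\Sigma_1\leq \delta n^\rho\,N_n^\lambda(Tn^\rho)$ combined with Lemma~\ref{lem:surp:poisson-conv}, which gives $N_n^\lambda(Tn^\rho)\dto N^\lambda(T)<\infty$, yields $\Sigma_1=O_{\sss\PR}(\delta n^\rho)$ for each fixed $T$. For $\Sigma_2$, Proposition~\ref{prop:large-comp-expl-early} allows me to choose $T=T(\varepsilon')$ such that every component whose exploration begins after time $Tn^\rho$ has size at most $\varepsilon' n^\rho$ with high probability, giving $\Sigma_2\leq \varepsilon' n^\rho\bigl(N_n^\lambda(\tell_n/2)-N_n^\lambda(Tn^\rho)\bigr)$. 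The remaining task is therefore to bound the residual surplus $N_n^\lambda(\tell_n/2)-N_n^\lambda(Tn^\rho)$ in probability, uniformly as $T\to\infty$ after $n\to\infty$.

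To do this, I would use the Doob--Meyer compensator from \eqref{prob-increment-surplus}: up to a martingale residual controlled by Doob's inequality, $N_n^\lambda(\tell_n/2)-N_n^\lambda(Tn^\rho)$ is bounded by $(1+o_{\sss\PR}(1))\tell_n^{-1}\sum_{l>Tn^\rho}A_{l-1}$ with $A_l=\refl{S_n(l)}$. After time $Tn^\rho$, once the top $K$ hubs have been explored (for $T,K$ large), the residual exploration takes place in $\mathcal{G}_n^K$ with criticality parameter $\nu_n^K<1$ (see \eqref{eqn:nu-K}); a path-counting argument analogous to Lemma~\ref{lem::tail_sum_squares}, adapted to bound the area $\sum_l A_l$ via two-step path counts in the hub-removed graph, should give the required $o_{\sss\PR}(\tell_n)$ bound in the iterated limit $\lim_{K\to\infty}\lim_{T\to\infty}\limsup_{n\to\infty}$. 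Choosing $\varepsilon'=\varepsilon/(2M)$ for the resulting uniform bound $M$ on the residual surplus, and then letting $\delta\to 0$ after $n\to\infty$, gives the claim.

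The main obstacle is adapting the path-counting to control the total area of the reflected exploration process rather than just the squared component sizes: the area involves the half-edge \emph{wait times} during exploration, whose expectation requires a more delicate moment estimate than the component-size bound appearing in the proof of Proposition~\ref{prop:l2-tight}. In particular, the scaling-limit convergence $\bar{\mathbf{S}}_n\dto\mathbf{S}_{\infty}^\lambda$ only controls $\refl{\bar{S}_n}$ on compact windows in rescaled time, whereas the full exploration extends to rescaled time $\tell_n/(2n^\rho)=\Theta_{\sss\PR}(n^\rho)$, so the subcritical residual analysis is essential for the argument to close.
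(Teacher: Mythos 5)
Your high-level decomposition is sensible, but the step you yourself flag as the main obstacle is a genuine gap, and the specific mechanism you propose to close it does not work as stated. The identity behind your claim that $N_n^\lambda(\tell_n/2)-N_n^\lambda(Tn^\rho)$ is bounded by $(1+\oP(1))\tell_n^{-1}\sum_{l>Tn^\rho}A_{l-1}$ comes from \eqref{prob-increment-surplus}, where the conditional surplus probability at step $l$ is $(A_{l-1}-1)/(\tell_n-2l-1)$; the replacement of the denominator by $\tell_n$ is uniform only for $l\leq Tn^\rho$ with $T$ fixed (i.e.\ on compact windows of rescaled time), whereas your $\Sigma_2$ runs all the way to $l=\Theta(\tell_n)=\Theta(n^{2\rho})$, where $\tell_n-2l-1$ degenerates. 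Consequently, even a bound of the form $\sum_l A_l=\sum_i D_i^2=\OP(n^{2\rho})$ (which is what a Lemma~\ref{lem::tail_sum_squares}--type path count delivers) does not control the true compensator $\sum_{l}A_{l-1}/(\tell_n-2l-1)$: the late steps carry weights that blow up relative to $\tell_n^{-1}$. So "tightness of the total surplus over the whole exploration", which your argument reduces to, is not a routine adaptation of Proposition~\ref{prop:l2-tight}; it is precisely the hard part. (It can be obtained, e.g.\ by a direct first-moment cycle count in the hub-removed graph with $\nu_n^{\sss K}<1$, but that is a different argument from the one you sketch, and you would still need to handle the component that straddles time $Tn^\rho$ and the surplus created when the hubs are re-inserted.)

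The paper avoids all late-time issues by never decomposing in exploration time for the subcritical core of the argument. It first passes to $D_i=\sum_{k\in\sCi}\td_k\geq|\sCi|$ and uses size-biasing to write the expectation of the sum as $\tfrac{\tell_n}{\varepsilon n^\rho}\tilde{\E}[\SP(\sC(V_n^*))\1\{|\sC(V_n^*)|\leq\delta n^\rho\}]$ for a size-biased root $V_n^*$; the engine is then Lemma~\ref{lem:sp-cv-n}, a joint tail bound $\probc{\SP(\sC(V_n^*))\geq K,\,D(V_n^*)\in(\delta_Kn^\rho,2\delta_Kn^\rho)}\leq C\sqrt{\delta}/(n^\rho K^{1.1})$ proved by optional stopping of the exploration started at $V_n^*$ against the barrier $H=n^\rho K^{1.1}/\sqrt{\delta}$, followed by a sum over $K$ and dyadic size scales. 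Only for $\lambda\geq 1$ does the paper split at a finite rescaled time $T_e$ (after which the hub-removed graph is subcritical), handling the early part by Cauchy--Schwarz together with Lemma~\ref{lem:surp:poisson-conv} and Proposition~\ref{prop:l2-tight}. The key point you are missing is that bounding $|\sCi|$ crudely by $\delta n^\rho$ and multiplying by a surplus count forces you to control the surplus over the entire exploration, whereas the paper's joint tail estimate exploits that components with many surplus edges are unlikely to be small, so that no statement about the full-time exploration process is ever needed.
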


\noindent The proof of Lemma~\ref{lem-surp-u-0} is an adaptation of  \cite[Proposition 19]{DHLS16} in this setting. 
We provide a proof of Lemma~\ref{lem-surp-u-0} in Appendix~\ref{appendix-surp-unot}. 
Next, let $\mathbf{Z}_n'(\lambda)$ denote the vector $(n^{-\rho}|\sCi|,\SP(\sCi))_{i\geq 1}$, ordered as an element in~$\Unot$. Below, we prove the scaling limit of $\mathbf{Z}_n'(\lambda)$: 
\begin{proposition}\label{lem:component-sizes}
Under {\rm Assumption~\ref{assumption1}}, as $n\to\infty$, 
\begin{equation}\label{eq:l2-conv}
\mathbf{Z}_n'(\lambda) \dto \mathbf{Z}(\lambda)
\end{equation}with respect to the $\Unot$ topology, where $\mathbf{Z}(\lambda)$ is defined in \eqref{eq:Z-limit}. 
\end{proposition}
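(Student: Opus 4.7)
The plan is to combine three ingredients: the joint process-level convergence $(\bar{\mathbf{S}}_n, \bar{\mathbf{N}}_n^\lambda) \dto (\biS, \mathbf{N}^\lambda)$ from Lemma~\ref{lem:surp:poisson-conv}, the continuity of excursion-functionals on good sample paths (Proposition~\ref{prop:conv-exc-length-area}, with Proposition~\ref{prop:limit-properties} ensuring that $\biS$ is good almost surely), and the tightness controls in Proposition~\ref{prop:l2-tight}, Proposition~\ref{prop:large-comp-expl-early}, and Lemma~\ref{lem-surp-u-0} to upgrade finite-dimensional convergence to the $\Unot$ topology.

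First I would identify, for each fixed $i$, the rescaled $i$-th largest component size $n^{-\rho}|\sCil|$ with the $i$-th largest excursion length $\phi_i(\bar{\mathbf{S}}_n)$, up to $\oP(1)$. By construction of Algorithm~\ref{algo-expl}, the $k$-th exploration interval $(\tau_{k-1}, \tau_k)$ has length $E_k + 1 = |\mathscr{C}_k| + \SP(\mathscr{C}_k)$, where $E_k$ is the edge count of the $k$-th explored component. In the rescaled process these intervals correspond to the $\thetaP(1)$-scale excursions of $\bar{\mathbf{S}}_n$, with the sub-excursions produced by $O(1)$ local fluctuations of $S_n$ contributing only $\oP(1)$ at the $n^{-\rho}$ scale. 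By Proposition~\ref{prop:large-comp-expl-early} the $i$-th largest component is explored within a time window $[0, Tn^\rho]$ with high probability for $T$ large, so its exploration interval is the $i$-th largest among excursions starting in $[0, T]$ and matches $\phi_i(\bar{\mathbf{S}}_n)$. Since $\SP(\sCil)$ will turn out to be tight (Step 2), $n^{-\rho}\SP(\sCil) = \oP(1)$ and the identification $n^{-\rho}|\sCil| = \phi_i(\bar{\mathbf{S}}_n) + \oP(1)$ follows.

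Next I would establish the finite-dimensional convergence. Proposition~\ref{prop:conv-exc-length-area} applied to $\bar{\mathbf{S}}_n \dto \biS$ yields $(\phi_i(\bar{\mathbf{S}}_n))_{i \leq m} \dto (\gamma_i(\lambda))_{i \leq m}$; its hypotheses (tightness of $\phi_1(\bar{\mathbf{S}}_n)$ and vanishing of late excursions) are exactly Proposition~\ref{prop:l2-tight} and Proposition~\ref{prop:large-comp-expl-early}. To include the surplus edges jointly, I would pass to the Skorohod representation of the convergence in Lemma~\ref{lem:surp:poisson-conv}, and observe that the argument of Lemma~\ref{lem::good:function:continuity} actually shows that the endpoints $(L_n^{(i)}, R_n^{(i)})$ of the $i$-th largest excursion of $\bar{\mathbf{S}}_n$ converge to the endpoints $(L^{(i)}, R^{(i)})$ of the $i$-th largest excursion of $\biS$. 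Since $\mathbf{N}^\lambda$ has an almost surely continuous compensator, $\mathbf{N}^\lambda$ has no jump at $L^{(i)}$ or $R^{(i)}$ almost surely; combining this with the $J_1$-convergence of $\bar{\mathbf{N}}_n^\lambda$ gives $\bar{N}_n^\lambda(R_n^{(i)}) - \bar{N}_n^\lambda(L_n^{(i)}) \to N^\lambda(R^{(i)}) - N^\lambda(L^{(i)}) = N_i(\lambda)$. This yields the joint convergence $\big(n^{-\rho}|\sCil|, \SP(\sCil)\big)_{i \leq m} \dto \big(\gamma_i(\lambda), N_i(\lambda)\big)_{i \leq m}$ for each $m$.

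Finally I would upgrade to $\Unot$-convergence using the tightness estimates. In the metric $\mathrm{d}_{\mathbb{U}}$ of \eqref{defn_U_metric}, one needs $\ell^2$-tail control on the first coordinates and $\ell^1$-tail control on the products $x_i y_i$; the former is Proposition~\ref{prop:l2-tight}, the latter is Lemma~\ref{lem-surp-u-0}, both uniformly in $n$, so combined with the finite-dimensional convergence they imply the claimed convergence in $\Unot$. The ordering convention is consistent with the limit because the excursion lengths of $\biS$ are almost surely strictly ordered (Definition~\ref{defn::good_function}(f), verified in Proposition~\ref{prop:limit-properties}). The main technical obstacle lies in Step 2: Proposition~\ref{prop:conv-exc-length-area} only addresses lengths and areas of a single process, so the joint convergence of mark counts of an auxiliary Poisson process inside the shifting excursion intervals of the driving process requires separate care, particularly in controlling potential accumulation of $\bar{\mathbf{N}}_n^\lambda$-marks near the excursion endpoints of $\bar{\mathbf{S}}_n$, where the limiting intensity degenerates but the prelimit intensity only tends to zero.
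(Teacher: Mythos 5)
Your proposal is correct and follows essentially the same route as the paper: finite-dimensional convergence via Theorem~\ref{thm::convegence::exploration_process}, Proposition~\ref{prop:limit-properties} and Proposition~\ref{prop:conv-exc-length-area} (with Propositions~\ref{prop:l2-tight} and~\ref{prop:large-comp-expl-early} supplying the hypotheses on the pre-limit process), joint convergence of the surplus counts from Lemma~\ref{lem:surp:poisson-conv} together with the excursion-endpoint convergence established inside Lemma~\ref{lem::good:function:continuity}, and the upgrade to $\Unot$ via Lemma~\ref{lem-surp-u-0}. You spell out the identification of component exploration intervals with excursions and the handling of marks near the shifting excursion endpoints more explicitly than the paper, which compresses these points into the area-convergence part of Proposition~\ref{prop:conv-exc-length-area}.
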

\begin{proof}
Recall from Proposition~\ref{prop:limit-properties} that the limiting process $\biS$ is good in the sense that all the conditions in Definition~\ref{defn::good_function} are satisfied. Also, Proposition~\ref{prop:large-comp-expl-early} ensures that the additional restriction on the pre-limit process in Proposition~\ref{prop:conv-exc-length-area} is satisfied. 
Thus, using Theorem~\ref{thm::convegence::exploration_process},  an application of Proposition~\ref{prop:conv-exc-length-area} yields the finite-dimensional convergence in \eqref{eq:l2-conv}. Finally, the convergence in the $\Unot$-topology follows using the tightness in Lemma~\ref{lem-surp-u-0}. 
\end{proof}

We now provide a proof of Theorem~\ref{thm:main}:

\begin{proof}[Proof of Theorem~\ref{thm:main}] 
Throughout the proof, we ignore the $\lambda$ in a predefined notation to simplify writing. 
We will work under the coupling under which  Proposition~\ref{prop:coupling-lemma} holds, i.e.,
$\mathcal{G}_n(p_c(1-\varepsilon_n))\subset\mathrm{CM}_n(\bld{d},p_c)\subset\mathcal{G}_n(p_c(1+\varepsilon_n))$, where $\varepsilon_n\to 0$. 
We write $\sCi^-$, $\sCi$ and $\sCi^+$ to denote the $i$-th largest component of 
$\mathcal{G}_n(p_c(1-\varepsilon_n))$, $\mathrm{CM}_n(\bld{d},p_c)$ and $\mathcal{G}_n(p_c(1+\varepsilon_n))$ respectively, and let $\mathbf{Z}_n^-$, $\mathbf{Z}_n$ and $\mathbf{Z}_n^+$ be the corresponding vectors, rearranged as elements of $\Unot$. 
Then,  
\begin{eq} \label{identical-limit}
\mathbf{Z}_n^+ \text{ and } \mathbf{Z}_n^- \text{ have identical scaling limits as Proposition~\ref{lem:component-sizes}.}
\end{eq}
Let $\mathrm{d}_{\sss \mathbb{U}} $ denote the metric for the $\Unot$ topology defined in \eqref{defn_U_metric}. The proof is complete if we can show that, as $n\to\infty$, 
\begin{eq}\label{eq:dist-mod}
\mathrm{d}_{\sss \mathbb{U}} (\mathbf{Z}_n^+,\mathbf{Z}_n) \pto 0.
\end{eq}
First, we prove that, for any $K\geq 1$, 
 \begin{eq}\label{eq:sandwich-perc-12}
 \lim_{n\to\infty} \PR(\mathscr{C}_{\sss (i)} ^{-} \subset \mathscr{C}_{\sss (i)} ^{+}, \ \forall i\leq K) = 1.
 \end{eq}
If $\mathscr{C}_{\sss (1)} ^{-} $ is not contained in $ \mathscr{C}_{\sss (1)} ^{+}$, then $|\mathscr{C}_{\sss (1)} ^{-}|\leq |\mathscr{C}_{\sss (j)} ^{+}|$ for some $j\geq 2$, which implies that $|\mathscr{C}_{\sss (1)} ^{-}|\leq |\mathscr{C}_{\sss (2)} ^{+}|$. 
Suppose that there is a subsequence $(n_{0k})_{k\geq 1} \subset \N$ along which 
\begin{eq}\label{liminf-prob-pos}
\lim_{n_{0k}\to\infty}   \PR(|\mathscr{C}_{\sss (1)}^{-}| \leq |\mathscr{C}_{\sss (2)}^{+}|) >0.
\end{eq}
If \eqref{liminf-prob-pos} yields a contradiction, then \eqref{eq:sandwich-perc-12} is proved for $K=1$. 
To this end, first note that $(n^{-\rho}(|\mathscr{C}_{\sss (i)}^{-}|, |\mathscr{C}_{\sss (i)}^{+}|)_{i\geq 1})_{n\geq 1}$ is tight in $(\ell^2_{\shortarrow})^2$.
Thus taking a subsequence $(n_k)_{k\geq 1} \subset (n_{0k})_{k\geq 1}$ along which the random vector converges, 
it follows that 
\begin{eq}
n_k^{-\rho}(|\mathscr{C}_{\sss (i)}^{-}|, |\mathscr{C}_{\sss (i)}^{+}|)_{i\geq 1} \dto (\gamma_i,\bar{\gamma}_i)_{i\geq 1} \quad \text{ in }(\ell^2_{\shortarrow})^2,
\end{eq}
 where $(\gamma_i)_{i\geq 1} \stackrel{\sss d}{=}(\bar{\gamma}_i)_{i\geq 1}$.
Thus, along the subsequence $(n_k)_{k\geq 1}$, 
\begin{eq} \label{eq:prob-second-larger-largest-2}
\lim_{n_k\to\infty} \PR(|\mathscr{C}_{\sss (1)}^{-}| \leq |\mathscr{C}_{\sss (2)}^{+}|) = \PR(\gamma_1\leq \bar{\gamma}_2).
\end{eq}
\begin{fact}\label{fact:same-dist-coupling-2}
For all $i\geq 1$, $\gamma_i = \bar{\gamma}_i$ almost surely.
\end{fact}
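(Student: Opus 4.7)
The plan is to exploit the graph inclusion $\cG_n(p_c(1-\varepsilon_n))\subset\cG_n(p_c(1+\varepsilon_n))$ to obtain a deterministic majorization between the ordered component sizes, and then to upgrade this stochastic domination to an almost-sure equality by using the distributional identity $(\gamma_i)_{i\geq 1} \stackrel{d}{=}(\bar\gamma_i)_{i\geq 1}$ supplied by Proposition~\ref{lem:component-sizes}.

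First I would establish a purely deterministic fact: whenever $H_1\subset H_2$ on a common vertex set, the sorted component sizes $(a_i)$ of $H_1$ and $(b_i)$ of $H_2$ satisfy the partial-sum majorization $\sum_{i=1}^k b_i \geq \sum_{i=1}^k a_i$ for every $k\geq 1$. The reason is that each $H_2$-component is a union of $H_1$-components, so the top $k$ $H_1$-components lie in some $s\leq k$ distinct $H_2$-components, which together have total size $\geq\sum_{i=1}^k a_i$, and the top $k$ $H_2$-components sum to at least that of any $s\leq k$ of them. Applied to $H_1=\cG_n(p_c(1-\varepsilon_n))$ and $H_2=\cG_n(p_c(1+\varepsilon_n))$, this yields, for every $n$ and $k\geq 1$,
\begin{equation}\label{eq:partial-sum-sandwich}
\sum_{i=1}^k |\sCi^+| \;\geq\; \sum_{i=1}^k |\sCi^-|.
\end{equation}

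Next I would pass \eqref{eq:partial-sum-sandwich} to the limit along the subsequence $(n_k)$ on which $n_k^{-\rho}(|\sCi^-|,|\sCi^+|)_{i\geq 1}\dto ((\gamma_i,\bar\gamma_i))_{i\geq 1}$ jointly in $(\ell^2_\shortarrow)^2$. Since any fixed-$k$ partial sum is a continuous function of finitely many coordinates, the limiting inequality $\bar S_k:=\sum_{i=1}^k\bar\gamma_i \geq S_k:=\sum_{i=1}^k \gamma_i$ holds almost surely for each $k\geq 1$.

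The last step is the elementary observation that if $X,Y$ live on the same probability space with $Y\geq X$ a.s.\ and $X\stackrel{d}{=} Y$, then $X=Y$ a.s.: for any $x\in\R$,
\begin{equation}
\PR(Y>x,\,X\leq x) = \PR(Y>x) - \PR(Y>x,\,X>x) = \PR(Y>x)-\PR(X>x)=0,
\end{equation}
using $Y\geq X$ for the second equality and $X\stackrel{d}{=} Y$ for the third; letting $x$ range over a countable dense set forces $Y=X$ a.s. Proposition~\ref{lem:component-sizes} provides the identity $S_k \stackrel{d}{=} \bar S_k$, so this argument gives $S_k=\bar S_k$ a.s.\ for every $k$, and differencing yields $\gamma_k=\bar\gamma_k$ a.s.\ for every $k\geq 1$. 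I do not anticipate a serious obstacle; the only point requiring care is that partial sums of $\ell^2_\shortarrow$-valued sequences are continuous coordinate functionals, so the deterministic majorization \eqref{eq:partial-sum-sandwich} transfers cleanly to the subsequential limit.
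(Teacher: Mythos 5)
Your proposal is correct and follows essentially the same route as the paper: partial-sum majorization of the ordered component sizes under the sandwich coupling, passage to the subsequential limit, and the standard observation that almost-sure domination plus equality in distribution forces almost-sure equality. The paper applies that last step coordinate-by-coordinate inductively rather than to the partial sums directly, but this is only a difference in bookkeeping.
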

\begin{proof}
Under the coupling in Proposition~\ref{prop:coupling-lemma}, $\sum_{j\leq i}|\mathscr{C}_{\sss (j)}^{-}|\leq \sum_{j\leq i}|\mathscr{C}^{+}_{\sss (j)}|$ and therefore $\PR(\sum_{j\leq i}\gamma_j\leq \sum_{j\leq i} \bar{\gamma}_j) =1$, for each fixed $i\geq 1$. 
In particular, $\gamma_1 \leq \bar{\gamma}_1$ almost surely. But, since $\gamma_1,\bar{\gamma}_1$ have the same distribution, it must be the case that $\gamma_1 = \bar{\gamma}_1$ almost surely. 
Inductively, we can prove that $\gamma_i = \bar{\gamma}_i$ almost surely.
\end{proof}
\noindent Thus, using Fact~\ref{fact:same-dist-coupling-2}, \eqref{eq:prob-second-larger-largest-2} reduces to
\begin{eq} \label{eq:prob-second-larger-largest-3}
\lim_{n_k\to\infty} \PR(|\mathscr{C}_{\sss (1)}^{-}| \leq |\mathscr{C}^{+}_{\sss (2)}|) = \PR(\gamma_1\leq \gamma_2) = \PR(\gamma_1= \gamma_2) = 0,
\end{eq}
where the last equality follows from Definition~\ref{defn::good_function}(f) and Proposition~\ref{prop:limit-properties}. Note that  \eqref{eq:prob-second-larger-largest-3} contradicts \eqref{liminf-prob-pos}, and thus \eqref{eq:sandwich-perc-12} follows for $K=1$.
For $K\geq 2$, we can use a similar argument to show that, with high probability, $\cup_{i\leq K}\mathscr{C}_{\sss (i)}^{-}\subset \cup_{i\leq K}\mathscr{C}_{\sss (i)}^{+}$. 
If both $\mathscr{C}_{\sss (1)}^{-}$ and  $\mathscr{C}_{\sss (2)}^{-}$ are  contained in $\mathscr{C}^{+}_{\sss (1)}$, then $|\mathscr{C}^{+}_{\sss (1)}| \geq |\mathscr{C}_{\sss (1)}^{-}|+|\mathscr{C}_{\sss (2)}^{-}|$, which occurs with probability tending to zero.
This follows using Fact~\ref{fact:same-dist-coupling-2} and $\PR(\bar{\gamma}_1\geq \gamma_1+\gamma_2)=\PR(\gamma_1\geq \gamma_1+\gamma_2) = 0$.
Thus, $\mathscr{C}_{\sss (2)}^{-} \subset \mathscr{C}^{+}_{\sss (2)}$ with high probability and we can use similar arguments to conclude \eqref{eq:sandwich-perc-12} for $i\leq K$.

Next, we show that, for any $K\geq 1$, 
 \begin{eq}\label{eq:sandwich-perc-123}
 \lim_{n\to\infty} \PR\big(\mathscr{C}_{\sss (i)} ^{-} \subset \mathscr{C}_{\sss (i)}  \subset \mathscr{C}_{\sss (i)} ^{+}, \ \forall i\leq K\big) = 1.
 \end{eq}
If $\mathscr{C}_{\sss (1)}  $ is not contained in $ \mathscr{C}_{\sss (1)} ^{+}$, then $|\mathscr{C}_{\sss (1)}|\leq |\mathscr{C}_{\sss (2)} ^{+}|$. 
However, since $|\mathscr{C}_{\sss (1)}^{-}|\leq |\mathscr{C}_{\sss (1)}|$, it follows that $|\mathscr{C}_{\sss (1)}^{-}|\leq |\mathscr{C}_{\sss (2)} ^{+}|$.
Now, one can repeat identical argument as in   \eqref{eq:sandwich-perc-12} to prove that $\mathscr{C}_{\sss (i)}  \subset \mathscr{C}_{\sss (i)} ^{+}$ for all $i\leq K$ with high probability.
Moreover, since $\mathcal{G}_n(p_c(1-\varepsilon_n)) \subset \mathrm{CM}_n(\bld{d}) $ and $\mathscr{C}_{\sss (i)} ^{-} \subset \mathscr{C}_{\sss (i)} ^{+}$ for all $i\leq K$ with high probability, it must also be the case that $\mathscr{C}_{\sss (i)} ^{-} \subset \mathscr{C}_{\sss (i)} \subset \mathscr{C}_{\sss (i)} ^{+}$ for all $i\leq K$ with high probability. Thus we conclude \eqref{eq:sandwich-perc-123}.
Finally, since $\mathbf{Z}_n^{-}$ and $\mathbf{Z}_n^{+}$ have the same distributional limit, it follows using \eqref{eq:sandwich-perc-12} that, for all $i\leq K$,
\begin{eq}
|\mathscr{C}_{\sss (i)} ^{+}| - |\mathscr{C}_{\sss (i)} ^{-}| = \oP(n^\rho) \quad\text{and}\quad  \mathrm{SP}(\mathscr{C}_{\sss (i)} ^{+})-\mathrm{SP}(\mathscr{C}_{\sss (i)} ^{-}) \pto 0.
\end{eq}
Thus, \eqref{eq:sandwich-perc-123} yields
 \begin{eq}\label{eq:bounds-comp-surp}
\big| |\mathscr{C}_{\sss (i)} ^{+}| - |\mathscr{C}_{\sss (i)}| \big|= \oP(n^{\rho}) \quad\text{and}\quad  \big|\mathrm{SP}(\mathscr{C}_{\sss (i)} ^+)-\mathrm{SP}(\mathscr{C}_{\sss (i)} ) \big|\pto 0.
\end{eq} 
Moreover, 
since both $(\mathbf{Z}_n^{-})_{n\geq 1}$ and $(\mathbf{Z}_n^{+})_{n\geq 1}$ are tight in~$\mathbb{U}^0_{\shortarrow}$, it also follows that $(\mathbf{Z}_n)_{n\geq 1}$ is tight in~$\mathbb{U}^0_{\shortarrow}$.
Thus \eqref{eq:dist-mod} follows and the proof of Theorem~\ref{thm:main} is now complete. 
\end{proof}

\subsection{Analysis of the diameter}\label{sec:diameter}
In this section, we investigate the asymptotics of the diameter of $\cG_n(p_c(\lambda))$.
As in the proof of Theorem~\ref{thm:main}, an application of Proposition~\ref{prop:coupling-lemma} yields the diameter of $\mathrm{CM}_n(\bld{d},p_c(\lambda))$ and completes the proof.

\begin{proof}[Proof of Theorem~\ref{thm:diameter-large-comp}] 
First let us fix $\lambda<1$ and use path counting.
Let $P_l$ denote the number of paths of length $l$ in $\cG_n(p_c(\lambda))$. 
Since $\lambda<1$, we have that $\tilde{\nu}_n = \lambda (1+\oP(1)) < 1$ with high probability. 
Now, an application of \cite[Lemma 5.1]{J09b} yields that for all $l\geq 1$, $\tilde{\E}[P_l] \leq \tell_n (\tilde{\nu}_n)^{l-1}$. 
Thus, on the event $\{\tilde{\nu}_n<1\}$, for any $K\geq 1$, 
\begin{eq}\label{diam-comput}
\tilde{\PR}(\diam(\cG_n(p_c(\lambda))) >K) \leq \sum_{l>K} \tilde{\E}[P_l]\leq \frac{\tell_n(\tilde{\nu}_n)^K}{1-\tilde{\nu}_n}
\end{eq}
Now, taking $K= C\log n$ for some large constant $C>0$ gives the desired $\log n$ bound on the diameter of $\cG_n(p_c(\lambda))$ with high probability for $\lambda<1$.

To extend to the case $\lambda \geq 1$, we delete $R$ highest-degree vertices to obtain a new graph $\mathcal{G}_n^{\sss R}$. 
Using \eqref{eqn:nu-K}, $\mathcal{G}_n^{\sss R}$ is a configuration model with the criticality parameter $\nu_n^{\sss R} <1$ with high probability. Thus the above result applies for $\mathcal{G}_n^{\sss R}$. 
However, after putting back the $R$ deleted vertices, the diameter of $\mathcal{G}^{\sss >R}$ can increase by at most a factor of $R$.
 This implies the $\log n$ bound on the diameter of $\cG_n(p_c(\lambda))$ with high probability for $\lambda \geq 1$.
 Finally, as remarked in the beginning of this section, the proof of Theorem~\ref{thm:diameter-large-comp} follows by invoking Proposition~\ref{prop:coupling-lemma}.
\end{proof}

\section{Near-critical behavior} \label{sec:near-critical-proofs}
Finally we consider the near-critical behavior for $\mathrm{CM}_n(\bld{d},p)$ in this section. 
The analysis for the barely subcritical and supercritical regimes are given separately  in Sections~\ref{sec:subcrit}~and~\ref{sec:supercrit} respectively.
\subsection{Barely-subcritical regime}\label{sec:subcrit}
In this section, we analyze the barely-subcritical regime ($p_n\ll p_c$) for percolation and complete the proof of Theorem~\ref{thm:barely-subcrit}. 
Recall the exploration process from Algorithm~\ref{algo-expl} on the graph $\cG_n(p_n)$, starting with vertex $j$.
Let $\sC(j,p_n)$ denote the connected component in~$\cG_n(p_n)$ containing vertex $j$.
 We will use the same notation for the quantities defined in Section~\ref{sec:expl-process}, but the reader should keep in mind that we now deal with different $p_n$ values. 
 We avoid augmenting $p_n$ in the notation for the sake of simplicity.
Consider exploring the graph using Algorithm~\ref{algo-expl} but starting from vertex $j$. 
The exploration process $\mathbf{S}_n^j$ is given by
   \begin{equation}
   \begin{split}
    S_n^j(0) = \tilde{d}_j, \quad S_n^j(l)= \tilde{d}_j+\sum_{i: i\neq j} \tilde{d}_i \mathcal{I}_i^n(l)-2l.
    \end{split}
   \end{equation}
  Thus the exploration process starts from $\tilde{d}_j$ now. 
   Now, for any $u>0$, as $n\to\infty$, 
   \begin{eq}\label{asympt-barely-subcrit-expl}
   \sup_{u\leq t}(n^{\alpha}p_n)^{-1} \Big|\sum_{i:i\neq j}\mathcal{I}_i^n(un^{\alpha}p_n) -un^{\alpha}p_n\Big| \pto 0.
   \end{eq}
This follows using identical arguments as in Lemma~\ref{lem:asymp-second-term}, and thus is  skipped here.            
Consider the re-scaled process $\bar{\mathbf{S}}^j_n$ defined as $\bar{S}^j_n(t)= (n^{\alpha}p_n)^{-1}S_n^j(\floor{tn^{\alpha}p_n })$. Then, 
   \begin{eq} \label{eqn::scaled_process_j}
    \bar{S}_n^j(t)&= (n^{\alpha}p_n)^{-1}\tilde{d}_j+(n^{\alpha}p_n)^{-1} \sum_{i:i\neq j}\tilde{d}_i \mathcal{I}_i^n(tn^{\alpha}p_n) - 2 t +\oP(1) \\
    & = \theta_j +(n^{\alpha}p_n)^{-1} \sum_{i:i\neq j}(\tilde{d}_i-1) \mathcal{I}_i^n(tn^{\alpha}p_n) -  t +\oP(1).
   \end{eq}
Recall that $\tilde{\E}$ is the conditional expectation conditionally on $(\tilde{d}_i)_{i\in [n]}$.
 Now, since the vertices are explored in a size-biased manner with the sizes being $(\tilde{d}_i/\tilde{\ell}_n)_{i\in [n]}$, for any $t\geq 0$,
\begin{eq} \label{expt-barely-sub-diff}
\tilde{\E}\bigg[\frac{1}{n^{\alpha}p_n}\sum_{i:i\neq j} (\tilde{d_i}-1)\cI_i^n\big(\lfloor tn^{\alpha}p_n\rfloor\big)\bigg] \leq \frac{tn^{\alpha}p_n}{n^{\alpha}p_n \tilde{\ell}_n}\sum_{i\in [n]}\tilde{d}_i(\tilde{d}_i-1) = \oP(1),
\end{eq}where the first inequality uses \eqref{prob-ind-lb}, and the final step follows from Lemma~\ref{lem:perc-degrees}. 
Consequently, $\bar{\mathbf{S}}_n^j$ converges in probability to the deterministic process $(\theta_j-t)_{t\in [0,\theta_j]}$. 
Thus 
\begin{eq}\label{edge-converge-subcrit}
\# \text{ edges in }\sC(j,p_n) \pto \theta_j.
\end{eq}
Next, the proof above shows that $\max_{l\leq \theta_j n^{\alpha}p_n} S_n^j(l) \leq 2\theta_j n^{\alpha}p_n$ with high probability.
Thus, the probability of creating a surplus edge at each step is at most $2\theta_j n^{\alpha}p_n/\tilde{\ell}_n$.
This implies that the probability of creating at least one surplus edge before $\theta_j n^{\alpha} p_n$ is at most $2\theta_j^2n^{2\alpha}p_n^2/\tilde{\ell}_n = \OP(n^{2\alpha-1}p_n)= \oP(1)$. 
Together with \eqref{edge-converge-subcrit}
yields
\begin{equation}
(n^{\alpha}p_n)^{-1}|\sC(j,p_n)| \pto \theta_j, \quad \text{and} \quad \PR(\surp{\sC(j,p_n)} =0) \to 1.
\end{equation}
From \eqref{edge-converge-subcrit}, we can also conclude that $\lim_{n\to\infty}\PR(i\in \mathscr{C}(j)) =0$ for all $i,j\geq 1$ and $i\neq j$, since, if $i\in \mathscr{C}(j)$, then the number of edges in $\sC(j,p_n)$ is atleast $\td_{i}+\td_j = n^{\alpha}p_n(\theta_i+\theta_j)$. Thus, $\sC(i,p_n)$ and $\sC(j,p_n)$ are disjoint with high probability. 

To conclude Theorem~\ref{thm:barely-subcrit}, 
we show that the rescaled vector of ordered component sizes is tight in $\ell^2_{\shortarrow}$. 
This tightness also yields that, for each fixed $j\geq 1$, 
\begin{equation}\label{eq:whp-Cj-barely-sub}
|\mathscr{C}(j,p_n)| = |\sC_{\sss (j)}(p_n)|, \text{ with high probability.} 
\end{equation}
To show $\ell^2_{\shortarrow}$-tightness, it is enough to show that, for any $\varepsilon >0$, 
\begin{equation}
\lim_{K\to\infty}\limsup_{n\to\infty}\PR\bigg(\sum_{i>K}|\sCi(p_n)|^2>\varepsilon n^{2\alpha}p_n^2\bigg) = 0.
\end{equation}
This can be concluded using identical arguments as in the proof of Proposition~\ref{prop:l2-tight} above.
The proof of Theorem~\ref{thm:barely-subcrit} is now complete.

\subsection{Barely-supercritical regime} \label{sec:supercrit}
In this section, we provide the proof of Theorem~\ref{thm:barely-supercrit}. Let $p_n = \lambda_n n^{-\eta}$, where $\lambda_n\to\infty$ since $p_n \gg p_c(\lambda)$. 
Our main tool here is a general result \cite[Theorem 5.4]{HJL16}, that provides asymptotics of the component sizes, if one can verify certain properties of an associated exploration process. 
Using Proposition~\ref{prop:coupling-lemma}, it is enough to prove Theorem~\ref{thm:barely-supercrit} for the graph $\cG_n(p_n)$ generated by Algorithm~\ref{algo-alt-cons-perc}. 
Let $\Mtilde{\bld{d}}$ denote the degree sequence obtained after performing Algorithm~\ref{algo-alt-cons-perc}~(S1). 
Thus, $\cG_n(p_n)$ is distributed as $\rCM_n(\Mtilde{\bld{d}})$.
We will verify Assumptions (B1)--(B8) from \cite{HJL16} on the graph $\cG_n(p_n)$,
which allows us to conclude Theorem~\ref{thm:barely-supercrit} from \cite[Theorem 5.4]{HJL16}.
We start by describing the following exploration process on $\Gp$ from \cite[Section 5.1]{HJL16}:

\begin{algo}\label{algo:expl-janson}
\normalfont
\begin{itemize}
\item[(S0)] Associate an independent $\mathrm{Exponential}(1)$ clock $\xi_e$ to each half-edge $e$. 
Any half-edge can be in one of the \emph{states} among sleeping, active, and dead. 
Initially at time $0$, all the half-edges are sleeping. 
Whenever the set of active half-edges is empty, select a sleeping half-edge $e$ uniformly at random among all sleeping half-edges and declare it to be active. 
If $e$ is incident to $v$, then declare all the other half-edges of $v$ to be active as well.
The process stops when there is no sleeping half-edge left; the
remaining sleeping vertices are all isolated and we have explored all other components.

\item[(S1)]  Pick an active half-edge (which one does not matter) and kill it, i.e., change its status to dead.
\item[(S2)] Wait until the next half-edge dies (spontaneously). This half-edge is paired to the one killed in the previous step (S1) to form an edge of the graph. 
If the vertex it belongs to is sleeping, then we declare this vertex awake and all of its other half-edges active.
Repeat from (S1) if there is any active half-edge; otherwise from (S0).
\end{itemize}
\end{algo}
Denote the number of living half-edges upto time $t$ by $L_n(t)$.
Let $\tV_{n,k}(t)$ denote the number of sleeping vertices of degree $k$ such that all the $k$ associated exponential clocks ring after time $t$.
Define
\begin{equation}\label{identities-A-V-S}
\tV_n(t) = \sum_{k=1}^\infty \tV_{n,k}(t), \quad \tS_n(t) = \sum_{k=1}^\infty k\tV_{n,k}(t),\quad \tA_n(t) = L_n(t)-\tS_n(t).
\end{equation}
We show that Assumptions (B1)--(B8) from \cite{HJL16} hold with 
\begin{eq}\label{eq:choice-parameters-barely-supercrit}
\zeta = \kappa^{\frac{1}{3-\tau}}, \quad \gamma_n = \beta_n = p_n^{\frac{\tau-2}{3-\tau}}, \quad \psi(t) =  \kappa t^{\tau-2} -t, \quad  \hat{g}(t) = t, \quad \hat{h}(t) = \kappa t^{\tau-2}+t,
\end{eq}where we recall the definition of $\kappa$ from \eqref{eq:asymp-laplace}.
The $\zeta$ in our notation corresponds to $\tau$ in the notation of \cite[Theorem 5.4]{HJL16}. We have used $\zeta$ instead of $\tau$, since in our paper $\tau$ denotes the power-law exponent.

We first find the number of vertices in $\mathcal{G}_n(p_n)$.
Let $\tilde{n}:= \#\{i: \tilde{d}_i \geq 1 \}$. 
Recall that $V_n$ is a vertex chosen uniformly at random from $[n]$ and let $D_n = d_{V_n}$ be the degree of $V_n$ in $\CM$.
Note that 
\begin{eq}\label{eq:expt-n-tilde-expr}
\E[\tilde{n}] &= \E\bigg[\sum_{i\in [n]} \ind{\tilde{d}_i \geq 1} \bigg] = \sum_{i\in [n]} \big(1- (1-p_n)^{d_i}\big) = n \E[1-(1-p_n)^{D_n}].
\\
\end{eq}
Using that $1-(1-x)^k \leq kx$ for any $k\geq 1$ and $x\in (0,1)$, we have $\E[\tilde{n}] \leq n \E[D_n]$. 
Also, using $1-(1-x)^k \geq kx-k^2x^2/2$ for any $kx< 1$, $k\geq 1$ and  $x\in (0,1)$, 
\begin{eq}
 \E[1-(1-p_n)^{D_n}] &\geq \E[1-(1-p_n)^{D_n} \ind{p_nD_n <1}] \\
 &\geq p_n \E[D_n \ind{p_nD_n <1}] - \frac{p_n^2}{2}  \E[D_n^2 \ind{p_nD_n <1 }]\\
 & = p_n\E[D_n] -p_n \E[D_n \ind{p_nD_n \geq 1}] - \frac{p_n^2}{2}  \E[D_n^2 \ind{p_nD_n <1}].
\end{eq}
Using Assumption~\ref{assumption-supercrit}~(ii), $(D_n)_{n\geq 1}$ is uniformly integrable and thus $\E[D_n \ind{p_nD_n \geq 1}] = o(1)$, where in the last step we have used that $p_n \ll 1$.
For the third term, since $(D_n)_{n\geq 1}$ is uniformly integrable, we have that $(D_n)_{n\geq 1}$ is also tight. Thus, $p_nD_n \xrightarrow{\sss \PR} 0$.  Using the uniform integrability of $(D_n)_{n\geq 1}$ again together with $p_nD_n \ind{p_nD_n<1} \leq 1$ and $p_nD_n \xrightarrow{\sss \PR} 0$, we conclude that $\E[D_n\times (p_nD_n \ind{p_nD_n<1})] \to 0$. 
From \eqref{eq:expt-n-tilde-expr}, and Assumption~\ref{assumption-supercrit}~(ii), we now conclude that 
\begin{eq}\label{asymp-expt-n-tilde}
\E[\tilde{n}] = np_n(\mu+o(1)).
\end{eq}
Further, 
using standard concentration inequalities for sums of independent Bernoulli random variables \cite[(2.9), Theorem 2.8]{JLR00}, it follows that 
\begin{eq}\label{eq:asymp:tilde-n}
\PR(|\tilde{n} - \E[\tilde{n}] |> \log n \sqrt{np_n} )\leq 2\e^{-C(\log n)^2},
\end{eq}
for some constant $C>0$. 
In what follows, we will often use \eqref{asymp-expt-n-tilde} and \eqref{eq:asymp:tilde-n} to replace $\tilde{n}$ by $np_n\mu$.

Conditions (B1)--(B4) \cite{HJL16} are straightforward.
(B8) follows using $\max_{i\in [n]} \tilde{d}_i = \OP(n^{\alpha}p_n) = \oP(\tilde{n}\gamma_n)$.
To verify Conditions (B5)--(B7), we first obtain below the asymptotics of the mean-curve and then show that the processes $\Mtilde{\bld{S}}_n$, $\tilde{\bld{V}}_n$, $\Mtilde{\bld{A}}_n$ remain uniformly close to their expected curves. 
These are summarized in the following two propositions:
\begin{proposition}\label{prop:expt-supcrit} For any fixed $u> 0$, as $n\to\infty$, 
\begin{gather}
\sup_{t\leq u}\bigg|\frac{1}{np_n\mu\beta_n}\big(\E[\tS_n(0)]-\E[\tS_n(\beta_n t)]\big) - \hat{h}(t)\bigg|\to 0, \label{eq:expt-S}\\
\sup_{t\leq u}\bigg|\frac{1}{np_n\mu\beta_n}\big(\E[\tV_n(0)]-\E[\tV_n(\beta_n t)]\big) - \hat{g}(t)\bigg|\to 0, \label{eq:expt-V}\\
\sup_{t\leq u}\bigg|\frac{1}{np_n\mu\gamma_n}\E[\tA_n(\beta_n t)] - \psi(t)\bigg| \to 0.\label{eq:expt-A}
\end{gather}
\end{proposition}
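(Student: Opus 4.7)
The plan is to compute all three expectations directly by conditioning on the percolated degree sequence $\Mtilde{\bld{d}}$, using independence of the $\mathrm{Exp}(1)$ clocks together with the binomial law $\tilde{d}_i\sim\mathrm{Bin}(d_i,p_n)$. Since a vertex of percolated degree $k$ has all its clocks exceed $t$ with probability $\e^{-kt}$, this will yield the exact formulas
\[
\E[\tV_n(t)] = \sum_{i\in[n]}\big[(1-p_n+p_n\e^{-t})^{d_i}-(1-p_n)^{d_i}\big], \qquad \E[\tS_n(t)] = \sum_{i\in[n]}d_ip_n\e^{-t}(1-p_n+p_n\e^{-t})^{d_i-1}.
\]
Setting $t=\beta_n t$ and $q_n:=p_n(1-\e^{-\beta_n t})=p_n^{1/(3-\tau)}t(1+o(1))$, these rewrite as $\E[\tV_n(0)]-\E[\tV_n(\beta_n t)]=n\E[1-(1-q_n)^{D_n}]$ and $\E[\tS_n(0)]-\E[\tS_n(\beta_n t)]=\ell_np_n\E[1-\e^{-\beta_n t}(1-q_n)^{D_n^\star-1}]$, where $D_n^\star$ is the size-biased degree from {\rm Assumption~\ref{assumption-supercrit}}.

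For \eqref{eq:expt-V}, uniform integrability of $(D_n)_{n\geq 1}$ lets me sandwich $\E[1-(1-q_n)^{D_n}]$ between $q_n\mu(1+o(1))$ from both sides (upper bound via $1-(1-x)^d\leq dx$; lower bound via truncating $D_n$ at a large constant $K$ and using $Kq_n\to 0$), which after division by $np_n\mu\beta_n$ gives $\hat{g}(t)=t$. For \eqref{eq:expt-S}, I will expand $\e^{-\beta_n t}=1-\beta_n t+o(\beta_n)$ and replace $(1-q_n)^{D_n^\star-1}$ by $\e^{-q_nD_n^\star}$, with the expected error bounded by $O(q_n)=o(\beta_n)$ (using $x\e^{-x}\leq 1/\e$). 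Then {\rm Assumption~\ref{assumption-supercrit}(iii)}, applied with argument $q_n/p_n^{1/(3-\tau)}=t(1+o(1))$, gives $\E[\e^{-q_nD_n^\star}]=1-\kappa\beta_n t^{\tau-2}+o(\beta_n)$. Multiplying out and collecting $\beta_n$-order terms produces $\beta_n(t+\kappa t^{\tau-2})+o(\beta_n)$, matching $\hat{h}(t)$.

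For \eqref{eq:expt-A}, I will use the identity $\tA_n(s)=L_n(s)-\tS_n(s)$ and compute $\E[L_n(s)]$ via an elementary ODE. Conditional on $\Mtilde{\bld{d}}$, the process $L_n$ drops by $2$ at each pairing event, and the waiting time between successive events is $\mathrm{Exp}(L_n(s)-1)$ (the number of running clocks, i.e.\ alive half-edges other than the current S1 pick). This yields $d\tilde{\E}[L_n(s)]/ds=-2(\tilde{\E}[L_n(s)]-1)$ with initial value $\tell_n$, whose solution is $\tilde{\E}[L_n(s)]=1+(\tell_n-1)\e^{-2s}$. Averaging over $\Mtilde{\bld{d}}$ and Taylor-expanding at $s=\beta_n t\to 0$ gives $\E[L_n(\beta_n t)]=\ell_np_n(1-2\beta_n t)+o(\ell_np_n\beta_n)$. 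Subtracting the $\tS_n$ asymptotic and using $\ell_n=n\mu(1+o(1))$ together with $\gamma_n=\beta_n$ yields $\E[\tA_n(\beta_n t)]=np_n\mu\gamma_n(\kappa t^{\tau-2}-t)+o(np_n\mu\gamma_n)$, exactly $\psi(t)$. Uniformity on $[0,u]$ follows from Dini's theorem applied to the monotone rescaled convergences of $\E[\tV_n]$, $\E[\tS_n]$ and $\E[L_n]$ (all with continuous limits), combined by subtraction for \eqref{eq:expt-A}. The main obstacle is the Laplace-transform step in \eqref{eq:expt-S}: extracting the leading $\kappa t^{\tau-2}$ correction from $(1-q_n)^{D_n^\star-1}$ requires carefully matching the power of $p_n$ between $q_n$ and the argument in {\rm Assumption~\ref{assumption-supercrit}(iii)}, since this non-standard scaling exponent is precisely what governs the barely-supercritical giant of size $\Theta(np_n^{1/(3-\tau)})$.
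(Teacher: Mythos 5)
Your proposal is correct and follows essentially the same route as the paper: the exact binomial/Laplace-transform identities for $\E[\tS_n]$ and $\E[\tV_n]$ (the paper's Lemma~\ref{lem:laplace-transform-estimate}), uniform integrability of $(D_n)$ for \eqref{eq:expt-V}, Assumption~\ref{assumption-supercrit}(iii) applied at argument $t\beta_np_n=tp_n^{1/(3-\tau)}$ for \eqref{eq:expt-S}, and the pure-death-process mean of $L_n$ combined with the $\tS_n$ asymptotics for \eqref{eq:expt-A}. Your refinements — the $-1$ in the death rate giving $\tilde\E[L_n(s)]=1+(\tell_n-1)\e^{-2s}$ rather than the paper's $\tell_n\e^{-2s}$ (an immaterial $O(1)$ discrepancy at scale $np_n\beta_n\to\infty$), the explicit $O(q_n)=o(\beta_n)$ error bound, and the monotonicity/Dini-type argument for uniformity in $t$ — are all sound and only sharpen steps the paper leaves implicit.
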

\begin{proposition}\label{prop:supcrit-expt-concen}
For any fixed $u> 0$, as $n\to\infty$, all the terms
$\sup_{t\leq u}|\tS_n(\beta_n t)-\E[\tS_n(\beta_n t)] |$,  $\sup_{t\leq u}|\tV_n(\beta_n t)-\E[\tV_n(\beta_n t)]|$, and $\sup_{t\leq u}|\tA_n(\beta_n t) - \E[\tA_n(\beta_n t)]| $ are $ \oP(n p_n\beta_n) $ (and thus  $\oP(n p_n\gamma_n)$).
\end{proposition}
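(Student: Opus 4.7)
The plan is to leverage the independent exponential clocks of Algorithm~\ref{algo:expl-janson} to express $\tV_n$ and $\tS_n$ as sums of independent indicators across vertices, and to handle $\tA_n = L_n - \tS_n$ via a martingale argument for the alive half-edge count $L_n(t)$.

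Conditionally on the degree sequence $\Mtilde{\bld{d}}$, set, for each vertex $v$ with $\tilde{d}_v \geq 1$, $Y_v(t) := \prod_{e \ni v} \ind{\xi_e > t},$ which is $\mathrm{Bernoulli}(\e^{-\tilde{d}_v t})$ and independent across $v$. Since a vertex is sleeping at time $t$ in Algorithm~\ref{algo:expl-janson} exactly when none of its clocks have rung,
\[
\tV_n(t) = \sum_{v} Y_v(t), \qquad \tS_n(t) = \sum_{v} \tilde{d}_v Y_v(t).
\]
Bernstein's inequality applied conditionally on $\Mtilde{\bld{d}}$ then yields pointwise concentration. For $\tV_n(\beta_n t)$ the conditional variance is at most $\tilde{n} = \Theta_{\sss\PR}(np_n)$, giving fluctuations of order $\OP(\sqrt{np_n \log n}) = \oP(np_n\beta_n)$, since $np_n\beta_n^2 = np_n^{(\tau-1)/(3-\tau)} \to \infty$ under $p_n \gg p_c$. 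For $\tS_n(\beta_n t)$ the conditional variance $\sum_v \tilde{d}_v^2 \e^{-\tilde{d}_v \beta_n t}(1 - \e^{-\tilde{d}_v \beta_n t})$ is bounded via the elementary estimate $x^2 \e^{-ax} \leq x/(ea)$, combined with the Laplace-transform asymptotics for $\tilde{D}_n^{\star}$ coming from Assumption~\ref{assumption-supercrit}~(iii) via the thinning relation in Lemma~\ref{lem:perc-degrees}.

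To upgrade pointwise to uniform concentration on $[0,u]$, I would exploit the monotonicity of $t \mapsto \tV_n(\beta_n t), \tS_n(\beta_n t)$: discretize $[0,u]$ with $O(\log^2 n)$ grid points; use a union bound on the grid; and bound the interpolation error on each subinterval by the oscillation of the continuous limit curves $\hat{g}, \hat{h}$ from Proposition~\ref{prop:expt-supcrit}, which is $o(np_n\beta_n)$ on subintervals of length $u/\log^2 n$.

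The main obstacle is the concentration of $\tA_n$, because $L_n$ does not admit an independent decomposition across vertices or half-edges. I would use the identity $L_n(t) = \tilde{\ell}_n - 2N(t)$, where $N(t)$ counts (S2) events. By the memoryless property of the exponential clocks, conditionally on the filtration of the exploration up to time $t$, the next (S2) event fires after an $\mathrm{Exp}(L_n(t)-1)$ waiting time, so $N$ has compensator $\int_0^t (L_n(s)-1)\dif s$, and the martingale $M(t) := N(t) - \int_0^t L_n(s)\dif s$ has quadratic variation at most $\int_0^t L_n(s)\dif s \leq \tilde{\ell}_n \beta_n u$ on $[0,\beta_n u]$. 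Doob's $L^2$-maximal inequality then gives
\[
\sup_{t\leq \beta_n u} |M(t)| = \OP(\sqrt{np_n\beta_n}) = \oP(np_n\beta_n),
\]
again using $np_n\beta_n \to \infty$. A Gr\"onwall argument applied to the linear integral equation satisfied by $\epsilon(t) := L_n(t) - \tilde{\E}[L_n(t)]$ propagates this control to $L_n$, and subtracting the already established concentration of $\tS_n$ finishes the bound for $\tA_n$.
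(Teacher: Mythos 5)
Your route is genuinely different from the paper's. The paper controls $\sup_{t\le u\beta_n}|\tS_n(t)-\E[\tS_n(t)]|^2$ in a single stroke via the martingale maximal inequality \cite[Lemma 5.15]{HJL16}, which bounds it by $C\,\E[\sum_i\td_i^2\min\{\td_iu\beta_n,1\}]$; it then uses $\max_i\td_i=\OP(d_1p_n)$ and the Laplace-transform asymptotics to get the normalized bound $O(\lambda_n^{-(\tau-2)/(3-\tau)})\to 0$, and treats $\tV_n,\tA_n$ "identically". You instead use the independent-indicator representation $\tV_n(t)=\sum_vY_v(t)$, $\tS_n(t)=\sum_v\td_vY_v(t)$ plus a discretization exploiting monotonicity, and for $\tA_n$ a bare-hands compensator/Doob/Gr\"onwall argument for the pure-death process $L_n$. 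The latter is a clean and correct replacement for the paper's implicit treatment of $L_n$: the compensator bound $\langle M\rangle(\beta_nu)\le\int_0^{\beta_nu}L_n(s)\,\dif s=\OP(np_n\beta_n)$ gives $\sup|M|=\OP(\sqrt{np_n\beta_n})=\oP(np_n\beta_n)$ since $np_n\beta_n=np_n^{1/(3-\tau)}\to\infty$, and the Gr\"onwall factor $\e^{2\beta_nu}\to1$ is harmless.

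There is, however, a concrete quantitative error in your treatment of $\tV_n$ and $\tS_n$. You claim $\OP(\sqrt{np_n\log n})=\oP(np_n\beta_n)$ "since $np_n\beta_n^2\to\infty$". But $np_n\beta_n^2=np_n^{(\tau-1)/(3-\tau)}=\lambda_n^{(\tau-1)/(3-\tau)}$, and the only hypothesis is $\lambda_n\to\infty$, possibly slower than any power of $\log n$; so $\sqrt{np_n\log n}/(np_n\beta_n)=\sqrt{\log n/\lambda_n^{(\tau-1)/(3-\tau)}}$ need not vanish. The repair is to drop the $\log n$: use Chebyshev at each point of a \emph{fixed}, $\varepsilon$-dependent grid (not $O(\log^2n)$ points), which is all that monotonicity of $\tV_n,\tS_n$ together with the uniform continuity of $\hat g,\hat h$ on $[0,u]$ requires; then $\OP(\sqrt{np_n})=\oP(np_n\beta_n)$ does follow from $np_n\beta_n^2\to\infty$. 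A second point needing care: for $\tS_n$ the bound $x^2\e^{-ax}\le x/(ea)$ \emph{alone} gives conditional variance $O(\tell_n/(\beta_nt))=O(np_n/\beta_n)$, which is \emph{not} $o((np_n\beta_n)^2)$ because $np_n\beta_n^3=\lambda_n^{(2\tau-3)/(3-\tau)}n^{(2-\tau)/(\tau-1)}\to0$ for slowly growing $\lambda_n$. You must use the factorization $\td_v^2\e^{-\td_v\beta_nt}(1-\e^{-\td_v\beta_nt})\le(e\beta_nt)^{-1}\td_v(1-\e^{-\td_v\beta_nt})$ and then the Laplace asymptotics \eqref{laplace-exp-alpha-estimate}, giving variance $\OP(np_n\,t^{\tau-3})=\OP(np_n)$ for each fixed grid point $t>0$; you gesture at this combination, but it should be made explicit since the cruder bound fails.
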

To prove Propositions~\ref{prop:expt-supcrit}~and~\ref{prop:supcrit-expt-concen}, we make crucial use of the following lemma: 
\begin{lemma} \label{lem:laplace-transform-estimate} For any $t>0$, as $n\to\infty$, 
\begin{align}
\E\bigg[\sum_{i\in [n]} \tilde{d_i} \e^{-t\beta_n\tilde{d}_i}\bigg] &= (1+o(1)) p_n \e^{-t\beta_n} \sum_{i\in [n]} d_i \e^{-t\beta_np_n d_i}, \\ \E\bigg[\sum_{i\in [n]} \e^{-t\beta_n\tilde{d}_i} \ind{\tilde{d}_i\geq 1}\bigg] &= (1+o(1)) \sum_{i\in [n]} \big( \e^{-t\beta_np_n d_i} - (1-p_n)^{d_i} \big). \label{eq:second-asymp}
\end{align}
\end{lemma}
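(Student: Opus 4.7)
\medskip

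The plan is to start from the exact formulas obtained by recognizing the two sums as expressions in the Laplace transform (equivalently, the probability generating function) of $\tilde d_i\sim\mathrm{Bin}(d_i,p_n)$. For any $s\in[0,1]$, $\E[s^{\tilde d_i}]=(1-p_n+p_ns)^{d_i}$ and $\E[\tilde d_is^{\tilde d_i}]=d_ip_ns(1-p_n+p_ns)^{d_i-1}$. Substituting $s=\e^{-t\beta_n}$ and writing $x_n=x_n(t):=p_n(1-\e^{-t\beta_n})$ gives
\begin{equation*}
\E\bigl[\tilde d_i\e^{-t\beta_n\tilde d_i}\bigr]=d_ip_n\e^{-t\beta_n}(1-x_n)^{d_i-1},\qquad \E\bigl[\e^{-t\beta_n\tilde d_i}\ind{\tilde d_i\geq 1}\bigr]=(1-x_n)^{d_i}-(1-p_n)^{d_i}.
\end{equation*}
After summing over $i$, both claims reduce to showing that $(1-x_n)^{d_i}$ may be replaced by $\e^{-t\beta_np_nd_i}$ with a multiplicative error of $1+o(1)$.

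The second step is a controlled Taylor expansion. Since $\beta_n=p_n^{(\tau-2)/(3-\tau)}\to 0$ and $p_n\to 0$, one obtains
\begin{equation*}
-\log(1-x_n)=p_nt\beta_n+O(p_n\beta_n^2+p_n^2\beta_n^2)=p_nt\beta_n\bigl(1+O(\beta_n)\bigr),
\end{equation*}
so that the ratio
\begin{equation*}
\frac{(1-x_n)^{d_i}}{\e^{-t\beta_np_nd_i}}=\exp\Bigl(-d_i\bigl[-\log(1-x_n)-t\beta_np_n\bigr]\Bigr)=\exp\bigl(O(d_ip_n\beta_n^2)\bigr),
\end{equation*}
and the analogous statement with exponent $d_i-1$ produces an additional harmless factor $\e^{O(p_n\beta_n)}=1+o(1)$. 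Hence the approximation is uniform on any set $\{d_i\leq M/(p_n\beta_n)\}$, where $M$ is a large constant, because $d_ip_n\beta_n^2\leq M\beta_n=o(1)$ there.

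The third step handles the hubs, for which the pointwise estimate breaks down. The strategy is to truncate at $d_i\leq M/(p_n\beta_n)$ and show that the tail $d_i>M/(p_n\beta_n)$ contributes negligibly to both sides. On this tail, the Bernstein-type bound $(1-x_n)^{d_i}\leq \e^{-\tfrac{1}{2}p_nt\beta_nd_i}$ (valid for $\beta_n$ small), together with $\e^{-t\beta_np_nd_i}\leq \e^{-tM}$, shows that both sums can be made arbitrarily small in the leading order by choosing $M$ large. To verify this one uses the heuristic already present in the Abel--Tauberian computation of Assumption~\ref{assumption-supercrit}(iii): the weighted sum $\sum_id_i\e^{-t\beta_np_nd_i}$ is governed by $d_i$ of order $1/(p_n\beta_n)$, and integrating against the empirical distribution (e.g.\ as in the Case~(I)/(II) analysis in Section~\ref{sec:discussion}) shows that the tail $d_i>M/(p_n\beta_n)$ gives a vanishing fraction of the total as $M\to\infty$, uniformly in $n$. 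The same truncation argument handles the $\ind{\tilde d_i\geq 1}$-sum, once we note that $\sum_i(1-p_n)^{d_i}$ appears symmetrically on both sides and needs no approximation.

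The main obstacle is precisely controlling the interplay between the hubs and the scaling $\beta_n=p_n^{(\tau-2)/(3-\tau)}$: the approximation $(1-x_n)^{d_i}\approx\e^{-t\beta_np_nd_i}$ is not uniform over all $i$ since $d_1=\OP(n^\alpha)$ can be much larger than $1/(p_n\beta_n)$ in the barely supercritical window. The truncation therefore has to be carried out in two parts, and the tail bound must rely on the Abel--Tauberian-style asymptotics built into Assumption~\ref{assumption-supercrit}(iii) rather than on any pointwise estimate.
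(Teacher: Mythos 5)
Your proof is correct and follows essentially the same route as the paper's: the exact binomial generating-function identities, followed by the Taylor expansion $-\log\big(1-p_n(1-\e^{-t\beta_n})\big)=t\beta_np_n(1+O(\beta_n))$. Your truncation at $d_i\le M/(p_n\beta_n)$ is a welcome refinement: the paper asserts the pointwise replacement $\big(1-p_n(1-\e^{-t\beta_n})\big)^{d_i}=(1+o(1))\e^{-t\beta_np_nd_i}$ for every $i$, which actually fails for the hubs (where $d_ip_n\beta_n^2$ need not vanish in the barely supercritical window), whereas your tail estimate shows those terms contribute only an $O(\e^{-ctM})$ fraction of sums that are of order $\ell_n$ and $np_n$ respectively, so the stated asymptotics for the sums are unaffected.
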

\begin{proof}
Note that if $X\sim \mathrm{Bin}(m,p)$, then 
\begin{eq}
\E\big[X\e^{-sX}\big] = mp\e^{-s}(1-p+p\e^{-s})^{m-1}.
\end{eq}
Putting $m=d_i$, $p=p_n$, and $s = t\beta_n$, it follows that
\begin{eq} \label{super-crit-d-til-e}
\E\big[\tilde{d}_i \e^{-t\beta_n\tilde{d}_i}\big] &= d_ip_n \e^{-t\beta_n} \Big(1-p_n\big(1-\e^{-t\beta_n}\big)\Big)^{d_i -1} = (1+o(1)) d_ip_n \e^{-t\beta_n} (1-p_nt\beta_n)^{d_i}\\
& = (1+o(1)) d_ip_n \e^{-t\beta_n} \e^{-t\beta_np_n d_i}.
\end{eq}
To prove \eqref{eq:second-asymp}, note that $\E[\e^{-sX} \ind{X\geq 1}] = \E[\e^{-sX}] - \PR(X=0)$.
The proof of \eqref{eq:second-asymp} now follows similarly.
\end{proof}

\begin{proof}[Proof of Proposition~\ref{prop:expt-supcrit}]
Note that, by Lemma~\ref{lem:laplace-transform-estimate},
\begin{eq}\label{eq:S_n-supercrit-expt}
\E\big[\tilde{S}_n( \beta_n t)\big] &= \E\bigg[\sum_{i\in [n]} \Mtilde{d}_i\e^{-t\beta_n \Mtilde{d}_i} \bigg]= (1+o(1))\ell_n p_n \e^{-t\beta_n} \E\big[\e^{-t\beta_n p_n D_n^{\star}}\big], \\ 
\E\big[\tilde{V}_n( \beta_n t)\big] &= \E\bigg[\sum_{i\in [n]} \e^{-t\beta_n \Mtilde{d}_i}\ind{\tilde{d}_i\geq 1} \bigg] =  (1+o(1))n\big( \E\big[\e^{-t\beta_n p_n D_n} - (1-p_n)^{D_n} \big]\big),
\end{eq}where $D_n^{\star}$ has a size-biased distribution with the sizes being $(d_i/\ell_n)_{i\in [n]}$, and $D_n$ is the degree of a vertex chosen uniformly at random from $[n]$. 
By the convergence of $\E[D_n]$ in Assumption~\ref{assumption1}, 
\begin{eq}
\E[\tV_n(0)]-\E[\tV_n(\beta_n t)] =(1+o(1))n\E\big[1 - \e^{-t\beta_n p_n D_n}\big] = (1+o(1)) tn\beta_np_n \mu,
\end{eq}
where the asymptotics of $n\E\big[1 - \e^{-t\beta_n p_n D_n}\big]$ follows using identical arguments as \eqref{eq:expt-n-tilde-expr}.
Further, by using \eqref{eq:asymp-laplace},
\begin{eq}\label{laplace-exp-alpha-estimate}
\E[\tS_n(0)]-\E[\tS_n(\beta_n t)] &= (1+o(1))\ell_np_n\E\big[1- \e^{-t\beta_n}\e^{-t\beta_n p_n D_n^{\star}}\big] \\
&= (1+o(1))\ell_np_n\E[1- (1-t\beta_n+o(\beta_n))\e^{-t\beta_n p_n D_n^{\star}}]\\
&=  (1+o(1))\ell_np_n\big(\E[1- \e^{-t\beta_n p_n D_n^{\star}}]+t\beta_n +o(\beta_n) \big)\\
&= (1+o(1))n\mu p_n \beta_n (\kappa t^{\tau-2}+t+o(1)).
\end{eq}
Thus, \eqref{eq:expt-S} and \eqref{eq:expt-V} follows. 
Moreover, $L_n(t)$ is a pure death process, where $L_n(0) = \sum_{i\in [n]} \Mtilde{d}_i$,  and the jumps occur at rate $L_n(t)$, and at each jump $L_n(t)$ decreases by $2$. 
Therefore, $\E[L_n(t)] = \E[L_n(0)]\e^{-2t}$ and consequently, by \eqref{identities-A-V-S} and \eqref{super-crit-d-til-e},
\begin{eq}
\E[\tA(\beta_n t)] &= \ell_np_n\big(\e^{-2\beta_nt} - \e^{-\beta_n t}\E\big[\e^{-tp_n\beta_nD_n^{\star}}\big]\big)+o(n\beta_np_n)\\
& = n\mu p_n \gamma_n(\kappa t^{\tau-2}-t)+o(n\beta_np_n).
\end{eq}
Thus the proof follows.
\end{proof}
\begin{proof}[Proof of Proposition~\ref{prop:supcrit-expt-concen}]
Let us consider $\tS_n$ only; the other inequalities follow using identical arguments. 
We will show that 
\begin{eq}\label{expt-square-s-n-tilde}
\E\Big[\sup_{t\leq u\beta_n} |\tS_n(t) - \E[\tS_n(t)]|^2 \Big] = o((np_n\beta_n)^2),
\end{eq}then an application of Markov's inequality completes the proof.
To prove \eqref{expt-square-s-n-tilde}, we will use \cite[Lemma 5.15]{HJL16}, which says that 
\begin{eq}\label{eq:expectation-sup-martingale}
\E\Big[\sup_{t\leq u\beta_n} |\tS_n(t) - \E[\tS_n(t)]| ^2 \Big] \leq C \E \bigg[ \sum_{i\in [n]} \Mtilde{d}_i^2 \min\{\tilde{d}_iu\beta_n,1\}\big)\bigg].
\end{eq}
Although, \cite[Lemma 5.15]{HJL16} was stated under Assumptions (A1)-(A4) of this paper, this particular proof does not use this assumption. The proof only uses \cite[Lemma 4.2]{HJL16}. 
Indeed, the deductions in (5.62)--(5.65) of \cite{HJL16} does not require any assumption on the degrees.
We skip redoing the proof of \eqref{eq:expectation-sup-martingale} here.
Using the fact that $1-\e^{-x} \geq (1\wedge x)/3$ in \eqref{eq:expectation-sup-martingale}, it follows that 
\begin{eq}\label{bound-expt-sup-second-moment}
\E\Big[\sup_{t\leq u\beta_n} |\tS_n(t) - \E[\tS_n(t)]| ^2 \Big] \leq C \E \bigg[ \sum_{i\in [n]} \Mtilde{d}_i^2 \big(1-\e^{-u\beta_n \Mtilde{d}_i} \big)\bigg]. 
\end{eq}

\noindent Now, using standard concentration inequalities for tails of binomial distributions \cite[Theorem~2.1]{JLR00}, for any $i\in [n]$,
\begin{eq}
\PR(\Mtilde{d}_i > 2d_1 p_n ) \leq C \e^{-C d_1 p_n} = C\e^{-C n^{\rho} \lambda_n},
\end{eq}
where $\lambda_n = p_nn^{\eta} \to\infty$. 
Therefore $\max_{i\in [n]}\Mtilde{d}_i \leq 2d_1 p_n$, almost surely.
Thus, 
\begin{eq}\label{supercrit-expt-bound-simple}
\frac{1}{(\ell_np_n\beta_n)^2}\E\Big[\sup_{t\leq u\beta_n} |\tS_n(t) - \E[\tS_n(t)]|^2 \Big] &\leq \frac{C 2d_1p_n }{(\ell_np_n\beta_n)^2} \E \bigg[ \sum_{i\in [n]} \Mtilde{d}_i \big(1-\e^{-u\beta_n \Mtilde{d}_i} \big)\bigg]\\
&\leq \frac{C 2d_1p_n \ell_np_n}{(\ell_np_n\beta_n)^2} \E \big[ 1-  \e^{-u\beta_np_nD_n^\star}\big],
\end{eq}
where the last step follows using \eqref{eq:S_n-supercrit-expt}.
The final term in \eqref{supercrit-expt-bound-simple} can be shown to be $O(\beta_n)$ using identical computations as  \eqref{laplace-exp-alpha-estimate}.
Thus,
\begin{eq}
\frac{1}{(\ell_np_n\beta_n)^2}\E\Big[\sup_{t\leq u\beta_n} |\tS_n(t) - \E[\tS_n(t)]|^2 \Big] \leq \frac{C 2d_1p_n \ell_np_n \beta_n}{(\ell_np_n\beta_n)^2} = O(d_1/n\beta_n) = O\big(\lambda_n^{-\frac{\tau-2}{3-\tau}}\big) = o(1),
\end{eq}
since $\lambda_n\to \infty$, as $n\to\infty$. Thus the proof follows.
\end{proof}
\begin{proof}[Proof of Theorem~\ref{thm:barely-supercrit}] 
The proof follows by applying \cite[Theorem 5.4]{HJL16}.
Propositions~\ref{prop:expt-supcrit},~\ref{prop:supcrit-expt-concen} verify conditions (B5)--(B7) in \cite{HJL16}, and the rest of the conditions are straightforward to verify.
\end{proof}

\small{\bibliographystyle{apa}
\bibliography{ultradensebib}}

\appendix 

\section{Appendix}
\subsection{Path counting}\label{sec:appendix-path-counting}
Recall the notation from in Section~\ref{sec:large-early}. We complete the proof of \eqref{path-count-ub-main} using path-counting techniques for configuration models from \cite[Lemma 5.1]{J09b}. 
  Let $\mathcal{A}_l(v,k)$ denote the event that there exists a path of length $l$ from $v$ to $k$ in the graph $\cG^{\sss K}_n$. Also, let $P_l$ denote the number of paths of length $l$. 
  Notice that 
\begin{equation}\label{path-count-split}
 \begin{split}
  & \tilde{\E}\bigg[\sum_{k\in [n]}(\tilde{d}_k-1)\ind{V_n^{*,{\sss K}}\leadsto k}\Big\vert V_n^{*,{\sss K}} = v\bigg]\\
  &\leq \td_{v}-1+  \sum_{l=1}^{(\log n)^2}\sum_{k\in [n]}(\tilde{d}_{k}-1)\probc{\mathcal{A}_l(v,k)} + \max_{k\in [n]} (\td_k-1)\times n \sum_{l\geq (\log n)^2}\tilde{\E}[P_l]. 
 \end{split}
 \end{equation}
Let $\fI_l(v,k)$ denote the collection of $\xx = (x_i)_{0\leq i\leq l}$ such that $x_0 = v$, $x_l = k$ and the $x_i$'s are distinct.
Then, an identical argument to the proof of \cite[Lemma 5.1]{J09b} shows that, for $l= o(n^{2\rho})$, the expected number of paths of length exactly $l$ starting from vertex $v$ and ending at $k$ is given by
\begin{equation}\label{eq:ineq-path}
 \sum_{\xx \in \fI_l(v,k)}\frac{d'_{x_0}d'_{x_l} \prod_{i=1}^{l-1} d_{x_i}'(d_{x_i}'-1) }{(\ell_n'-1)\cdots (\ell_n'-2l+1)}\leq \frac{d'_v \ell'_n}{\ell'_n-2l+3}(\nu_n^{\sss K})^{l-1} = \Big(1+\OP\Big(\frac{l}{\tell_n}\Big)\Big)d'_v(\nu_n^{\sss K})^{l-1},
\end{equation}where $\ell_n'=\sum_{i\in [n]} d_i'$. Recall that $\ell_n' = \tell_n(1+\oP(1))$. 
  Thus, the second term in \eqref{path-count-split} is at most 
\begin{equation}\label{sus-simpl-1}
 \begin{split}
  & \sum_{l = 1}^{(\log n)^2}\sum_{k\in [n]}(\tilde{d}_k-1)\sum_{x_i\neq x_j, \forall i\neq j} \frac{d_v'd_{k}'\prod_{i=1}^{l-1}d_{x_i}'(d_{x_i}'-1)}{(\ell_n'-1)\cdots (\ell_n'-2l+1)} \\
  &\leq  (1+\oP(1))\td_v \bigg(\frac{1}{\tell_n} \sum_{k\in [n]}d_k'(\td_k-1)\bigg)\sum_{l=1}^{\infty}(\nu_n^{\sss K})^{l-1}  \\
  &\leq  (1+\oP(1))\td_v \bigg(\frac{1}{\tell_n} \sum_{k> K}\td_k(\td_k-1)\bigg)\sum_{l=1}^{\infty}(\nu_n^{\sss K})^{l-1}
  \leq (1+\oP(1)) \frac{\td_v \E[\td_{V_n^{*,{\sss K}}}-1]}{1-\nu_n^{\sss K}},
 \end{split}
 \end{equation}where in the one-but-last step we have used $d_i'=0$ for $i\leq K$,  $d_i'\leq \td_i$ for $i>K$ and $\nu_n^{\sss K}<1$. The third term in \eqref{path-count-split} is $\oP(1)$ uniformly over $v$ by \eqref{diam-comput}. 
 Thus the proof of  \eqref{path-count-ub-main} follows. 
 \qed 
 \subsection{Convergence of process tracking surplus}\label{appendix-surplus} 
 In this section, we complete the proof of Lemma~\ref{lem:surp:poisson-conv}. 
 We first argue that, for any fixed $u>0$, 
\begin{equation}\label{1d-tight-surplus}
\big(\bar{N}_n^\lambda (u)\big)_{n\geq 1} \text{ is tight in }\R_+.
\end{equation}
Fix $\varepsilon>0$. 
Recall the asymptotics from Lemma~\ref{lem:perc-degrees} which will be used throughout the proof. 
Also, recall that $\tPR$ and $\tE$ respectively denote the conditional probability and expectation conditionally on $(\tilde{d}_i)_{i\in [n]}$. 
To simplify writing, when we write bounds on the conditionals probabilities $\tPR$ and $\tE$, we always implicitly assume that the bounds hold with high probability.
Recall from \eqref{eqn:intensity} that the compensator of $\bar{\mathbf{N}}_n^\lambda$ is approximately proportional to $\mathrm{refl}(\bar{\mathbf{S}}_n)  \xrightarrow{\sss d} \mathrm{refl} (\mathbf{S}^\lambda_\infty )$, where the distributional convergence follows using Theorem~\ref{thm::convegence::exploration_process} and the continuity of the reflection map (see \cite[Lemma 13.5.1]{W02}).
We write $A_i$ denote the number of active half-edges after stage $i$ while implementing Algorithm~\ref{algo-expl}.
Thus $n^{-\rho} A_{\floor{tn^{\rho}}} =\mathrm{refl}(\bar{S}_n(t))$.
Using the fact that the supremum of a process is continuous with respect to the Skorohod $J_1$-topology \cite[Theorem 13.4.1]{W02}, we can choose $K\geq 1$ large enough so that for all sufficiently large $n$
\begin{eq}\label{eq:tightness-supremum}
 \tPR\Big(\sup_{i\leq \floor{un^{\rho}}} A_{i}>K n^{\rho}\Big) <\varepsilon.
\end{eq}
Fix times $0<l_1<\dots<l_m\leq \floor{un^\rho}$, and let $\cA(l_1,\dots,l_m)$
denote the event that the surplus edges appear at times $l_1,\dots,l_m$ and $A_{l_{j}-1}\leq K n^\rho$ for all $j \in [m]$. Then,  
\begin{eq}\label{eq:ub-surplus-restricted-suprema}
&\tPR\bigg(\sum_{i=2}^{\floor{un^\rho}}\xi_i \geq m, \text{ and }\sup_{i\leq \floor{un^\rho}} A_{i}\leq K n^\rho\bigg) \leq \sum_{0<l_1<\dots<l_m\leq \floor{un^\rho}} \tPR(\cA(l_1,\dots,l_m)) \\
&\leq \sum_{0<l_1<\dots<l_m\leq \floor{un^\rho}} \tE \big[\tPR(\text{surplus created at }l_m\vert \sF_{l_m-1})\ind{A_{l_m - 1} \leq K n^\rho} \mathbbm{1}_{\cA(l_1,\dots,l_{m-1})}\big] 
\\
&\leq \frac{Kn^\rho}{\tell_n -2\floor{un^\rho} +1} \sum_{0<l_1<\dots<l_m\leq \floor{un^{\rho}}} \tPR(\cA(l_1,\dots,l_{m-1})).
\end{eq}
Continuing the iteration in the last step, it follows that with high probability 
\begin{eq}\label{ub-tightness-sup-bounded}
\tPR\bigg(\sum_{i=2}^{\floor{un^\rho}}\xi_i \geq m, \text{ and }\sup_{i\leq \floor{un^\rho}} A_{i}\leq K n^\rho\bigg)  \leq (1+o(1))\Big(\frac{K n^\rho}{\tell_n}\Big)^m \frac{(\floor{un^\rho})_m}{m!},
\end{eq}where $(n)_m = n(n-1)\dots (n-m+1)$. The last term in \eqref{ub-tightness-sup-bounded} tends to zero in the iterated limit $\lim_{m\to\infty}\limsup_{n\to\infty}$. 
An application of \eqref{eq:tightness-supremum} now yields~\eqref{1d-tight-surplus}.

Next, let $\mathbf{S}_n'$ be the process obtained by discarding the points where a surplus edge was added. More precisely, if $\zeta_ l = S_n(l) - S_n(l-1)$, then  we can define $S_n'(l) = S_n'(l-1) +\zeta'_{l}$, where 
\begin{equation}\label{eq:defn-zetal}
\zeta'_l = \zeta_{k_l}, \quad \text{with} \quad k_l = \inf\{j>k_{l-1}: \zeta_j \neq -2\}, \ k_0 =0.
\end{equation}
Let $\bar{S}_n'(t) = n^{-\rho} S_n'(\floor{tn^\rho})$.
Also, let $\mathrm{d}_{J_1, T}$ denote the metric for the  Skorohod $J_1$-topology on $\mathbb{D}([0,T], \R)$.
We claim that, for any $T>0$ and $\varepsilon>0$,
\begin{equation}\label{eq:Sn-Snprime-close}
\lim_{n\to\infty}\PR\big( \mathrm{d}_{J_1,T} (\bar{\mathbf{S}}_n',\bar{\mathbf{S}}_n) > \varepsilon\big) = 0.
\end{equation}
First, let $1\leq l_1<\dots<l_K\leq \floor{Tn^\rho}$ denote the times where the  surplus edges have occurred. 
Also, let $\cA$ be the good event that $l_{j}+1<l_{j+1}$ for all $j\leq K$, i.e., none of the surplus edges occur in consecutive steps.
Note that
\begin{eq}\label{eq:good-event-surplus}
&\tPR\Big(\cA^c\bigcap \Big\{\sup_{i\leq \floor{Tn^\rho}} A_{i}\leq K n^\rho\Big\} \Big) \leq Tn^\rho \Big(\frac{Kn^\rho}{\tell_n}\Big)^2 = O(n^{-\rho}),
\end{eq}and thus using \eqref{eq:tightness-supremum}, $\PR(\cA^c) \to 0$. 
We now restrict ourselves on $\cA$. 
Putting $l_0 =0$ and $l_{K+1} = \floor{Tn^{\rho}}+1$, let 
\begin{eq}\label{eq:time-change-definition}
\Lambda_n(l) = 
\begin{cases}
l+j-1 \quad &\text{for } l_{j-1}<l<l_j,\\
l_j+j-1 \quad &\text{for } l = l_j - 0.5,\\
l_j+j \quad &\text{for } l = l_j.
\end{cases}
\end{eq}
$\Lambda_n(t)$ is obtained by linearly interpolating between the values specified by \eqref{eq:time-change-definition}. 
Also, note that the definition of $\Lambda_n$ works well on $\cA$, and on $\cA^c$ we define $\Lambda_n(t) =t$.
Using \eqref{1d-tight-surplus} and  \eqref{eq:good-event-surplus}, it immediately follows that 
\begin{eq}\label{eq:modified-J1-1}
\sup_{l\leq Tn^\rho} |\Lambda_n(l) - l | = \oP(n^\rho). 
\end{eq}
Moreover, the occurrence of each surplus edge causes $|S_n'(l) - S_n(\Lambda_n(l))|$ to increase by at most 2, so that  
\begin{eq}\label{eq:modified-J1-2}
\sup_{l\leq Tn^\rho} 
|S_n'(l) - S_n(\Lambda_n(l))| = \oP(n^\rho). 
\end{eq}
Now, \eqref{eq:Sn-Snprime-close} follows by combining \eqref{eq:modified-J1-1} and \eqref{eq:modified-J1-2}. 
We now proceed to complete the proof of Lemma~\ref{lem:surp:poisson-conv}.
Let set up some notation for the rest of the proof. Fix $T>0$, $k\geq 0$ and let $\mathrm{Surp}_T = \{l_1,\dots ,l_k\}$, where $1\leq l_1 <l_2<\dots<l_k \leq \floor{Tn^\rho}+k$. Let $(z_l)_{l\leq \floor{Tn^\rho}+k}$ be a sequence of integers such that $z_{l_i} = -2$ and  $z_l\geq -1$ for $l\notin \{l_1,\dots,l_k\}$. 
Thus $(z_l)_{l\leq \floor{Tn^\rho}+k }$ represents a sample path of $S_n$ which has explored $k$ surplus edges, and $\mathrm{Surp}_T $ is the set of times when surplus edges are found. 
Next, $(z_l')_{l\leq \floor{Tn^\rho}}$ denote the sequence obtained from $(z_l)_{l\leq \floor{Tn^\rho}+k}$ by deleting the $-2$'s. 
Thus, $(z_l')_{l\leq \floor{Tn^\rho}}$ corresponds to a sample path of $S_n'$. 
Recall that $\zeta_l = S_n(l) - S_n(l-1)$. Let $\omega_n\to\infty$ sufficiently slowly. 
Thus, 
\begin{eq}\label{sample-path-prob-conditional}
&\tPR(N_n^{\lambda}(\floor{Tn^\rho}+k) = k \vert (S_n'(l))_{l\leq \floor{Tn^\rho}}= (z_l')_{l\leq \floor{Tn^\rho}}, N_n^{\lambda}(\floor{Tn^\rho}+k)  \leq \omega_n) \\
& = \sum_{1\leq l_1<\dots<l_k\leq Tn^\rho}\PR\bigg(\text{surplus occurs only at times }l_1, \dots,l_k \bigg\vert \substack{\big(S_n'(l)\big)_{l\leq Tn^\rho} = (z_l')_{l\leq Tn^\rho},\\ N_n^\lambda(\floor{T n^\rho}+k)\leq \omega_n}\bigg)\\
& = \sum_{1\leq l_1<\dots<l_k\leq Tn^\rho} \frac{\tPR(\zeta_l = z_{l}, \ \text{ for all } 1\leq l\leq \floor{Tn^\rho}+k)}{\tPR(\big(S_n'(l)\big)_{l\leq Tn^\rho} = (z_l')_{l\leq Tn^\rho}, N_n^\lambda(\floor{T n^\rho}+k)\leq \omega_n)}.
\end{eq}
Define $m_1= \{i\in [n]:d_i = z_1+2\}$, and for $l\notin\mathrm{Surp}_T$, we denote 
$m_l = \#\{i\in [n]:d_i = z_l+2\} - \#\{j< l:z_j = z_l\}.$
Next, let $a_l$ denote the number of active half-edges at time $l$ when the exploration process takes the path $(z_l)_{l\leq \floor{Tn^\rho}+k}$, and $a_l' = S_n'(l)-\min_{j<l} S_n'(j)$. 
Now, 
\begin{eq}\label{exact-prob-sample-path}
&\tPR(\zeta_l = z_l, \ \forall l\leq \floor{Tn^\rho}+k) = \frac{\prod_{l\notin \mathrm{Surp}_T} m_l \times \prod_{j=1}^k(a_{l_j-1}-1)}{(\tell_n - 1)(\tell_n-3)\dots (\tell_n - 2\floor{Tn^\rho}-2k+1)} \\
& = \frac{\prod_{l\notin \mathrm{Surp}_T} m_l \times \prod_{j=1}^k(a_{l_j-1}-1)}{(\tell_n - 1)\dots (\tell_n - 2\floor{Tn^\rho}+1)} \times  (1+\oP(1))\prod_{j=1}^k\frac{a_{l_j-1}'}{\tell_n^k},
\end{eq}where the $\oP(1)$ term above is uniform over $k\leq \omega_n = \log n$. 
Thus, 
\begin{eq}\label{sample-path-prob-conditional-2}
\eqref{sample-path-prob-conditional}& =(1+o(1))\frac{\sum_{1\leq l_1<\dots<l_k\leq \floor{Tn^\rho}+k}\prod_{j=1}^k\frac{a_{l_j-1}'}{\tell_n^k} }{\sum_{r=0}^{\omega_n}\sum_{1\leq l_1<\dots<l_r\leq  \floor{Tn^\rho}+r}\prod_{j=1}^r\frac{a_{l_j-1}'}{\tell_n^r}} =: (1+o(1)) \frac{\beta_{n,k}}{\sum_{r=0}^\infty \beta_{n,r}},
\end{eq}where $\beta_{n,r} = 0$ for $r>\omega_n$. 
We write $\tilde{\mu} = \lambda\mu^2/ \sum_i \theta_i^2$, so that 
$\tilde{\ell}_n= \tilde{\mu}n^{2\rho}(1+\oP(1))$.
Now, using $\mathrm{refl}(\bar{\mathbf{S}}_n')  \xrightarrow{\sss d} \mathrm{refl} (\mathbf{S}^\lambda_\infty )$, it follows that 
\begin{eq}\label{eq:beta-S-joint-convergence}
\Big((\beta_{n,r})_{r\geq 0}, (\bar{S}_n'(u))_{u\leq T} \Big) \dto \bigg(\Big(\frac{1}{r!}\Big(\frac{1}{\tilde{\mu}} \int_0^T \refl{\iS(u)}\dif u\Big)^r\Big)_{r\geq 0}, (\iS(u))_{u\leq T} \bigg),
\end{eq}
where the convergence of $(\beta_{n,r})_{r\geq 0}$ holds with respect to the product topology on $\R^\infty$.
Next, let us ensure that $\sum_{r=0}^\infty \beta_{n,r}$ in \eqref{sample-path-prob-conditional} converges to the desired quantity. 
To this end, consider a probability space where the convergence of \eqref{eq:beta-S-joint-convergence} holds almost surely. 
On this space, $\sup_{l\leq Tn^\rho+k} \refl{S_n'(l)}\leq 2 (\sup_{l\leq Tn^\rho+k} S_n'(l) +\omega_n)=: X_n(T)$, and thus
\begin{eq}
\beta_{n,r} \leq \frac{(Tn^\rho+\omega_n)^r}{r!} \frac{X_n(T)^r}{\tell_n^r}. 
\end{eq}
Since $n^{-\rho}\sup_{l\leq Tn^{\rho}+k} S_n'(l)$ converges, an application of Dominated Convergence Theorem yields that 
\begin{eq}\label{eq:normalization-surplus-limit}
\sum_{r\geq 0} \beta_{n,r} \asto \sum_{r\geq 0}\frac{1}{r!}\Big(\frac{1}{\tilde{\mu}} \int_0^T \refl{\iS(u)}\dif u\Big)^r = \exp \bigg(\frac{1}{\tilde{\mu}} \int_0^T \refl{\iS(u)}\dif u\bigg).
\end{eq}
Next, for bounded continuous functions $\phi_1: \mathbb{D}([0,T], \R) \to \R$ and $\phi_2: \N \to \R$, 
\begin{eq}
&\E \big[\phi_1\big(\big(\bar{S}_n'(u)\big)_{u\leq T}\big)\phi_2(\bar{N}_n^{\lambda}(T)) \big] \\
&= \E \big[\phi_1\big(\big(\bar{S}_n'(u)\big)_{u\leq T}\big)\phi_2(\bar{N}_n^{\lambda}(T)) \ind{N_n^{\lambda}(\floor{Tn^{\rho}} + k)  \leq \omega_n}\big] +o(1)\\
& = o(1) + \E \bigg[\phi_1\big(\big(\bar{S}_n'(u)\big)_{u\leq T}\big)\ind{N_n^{\lambda}(\floor{Tn^{\rho}} + k)   \leq \omega_n}\times (1+o(1)) \frac{\sum_{k\geq 0}\phi_2(k) \beta_{n,k}}{\sum_{r\geq 0} \beta_{n,r}}\bigg] \\ 
& = o(1) + \E \bigg[\phi_1\big(\big(\bar{S}_n'(u)\big)_{u\leq T}\big)\times \frac{\sum_{k\geq 0}\phi_2(k) \beta_{n,k}}{\sum_{r\geq 0} \beta_{n,r}}\bigg] \to \E \big[\phi_1\big(\big(\iS(u)\big)_{u\leq T}\big)\phi_2(N^\lambda(T)) \big],
\end{eq}where $N^\lambda(T)$, conditionally on $(\iS(u))_{u\leq T}$, is distributed as Poisson$(\frac{1}{\tilde{\mu}}\int_0^T\refl{\iS(u)} \dif u)$.
We have used \eqref{1d-tight-surplus} in the third step, and the final step follows by combining \eqref{eq:beta-S-joint-convergence} and \eqref{eq:normalization-surplus-limit}.
Hence, we have shown that, for any $T>0$, 
\begin{eq}\label{total-limit-law}
\Big(\big(\bar{S}_n'(u)\big)_{u\leq T},\bar{N}_n^{\lambda}(T)\Big)\dto \Big(\big(\iS(u)\big)_{u\leq T},N^\lambda(T)\Big).
\end{eq}
Next, let $U_1^n<U_2^n<...$ denote the location of surplus edges in the process $S_n$. 
Then, using \eqref{exact-prob-sample-path} yields
\begin{eq}\label{location-law}
&\tPR\Big(U_j^n = l_j, \text{ for all }j\in [k]\Big\vert \big(\bar{S}_n'(u)\big)_{u\leq T},\bar{N}_n^{\lambda}(T) = k\Big)\\
& = (1+o(1)) \frac{\frac{1}{\tell_n^k}\prod_{j=1}^k (A_{l_j}-1)}{\sum_{1\leq l_1'<\dots < l_k'\leq \floor{Tn^{\rho}}+k}\frac{1}{\tell_n^k}\prod_{j=1}^k (A_{l_j'}-1)}.
\end{eq}
From this, it can be seen that the law of $n^{-\rho} (U_j^n)_{j\in [k]}$, conditionally on $(\bar{S}_n'(u))_{u\leq T}$, and $\bar{N}_n^{\lambda}(T) = k$, converges to the order-statistics of $k$ i.i.d random variables with density $\frac{\ind{u\in [0,T]} \refl{\iS (u)}}{\int_0^T\refl{\iS (u)}\dif u}$. This shows that the location of the occurrence of surplus edges, conditionally on $(\bar{S}_n'(u))_{u\leq T}$, converges in distribution to the location of the points of the Poisson process \eqref{defn::counting-process} on $[0,T]$ conditionally on $\big(\iS(u)\big)_{u\leq T}$. 
Convergence of the total number of surplus edges created, conditionally on $(\bar{S}_n'(u))_{u\leq T}$, is given by \eqref{total-limit-law}.
Thus combining \eqref{total-limit-law} and \eqref{location-law}, it follows that 
\begin{eq}\label{process-counting-limit-law}
\Big(\big(\bar{S}_n'(u)\big)_{u\leq T},\big(\bar{N}_n^{\lambda}(u)\big)_{u\leq T}\Big)\dto \Big(\big(\iS(u)\big)_{u\leq T},\big(N^\lambda(u)\big)_{u\leq T}\Big).
\end{eq}
Now, an application of \eqref{eq:Sn-Snprime-close} completes the proof of Lemma~\ref{lem:surp:poisson-conv}.
\qed

\subsection{Tightness of component sizes and surplus} \label{appendix-surp-unot}
In this section, we prove Lemma~\ref{lem-surp-u-0}. 
Let $V_n^*$ denote a vertex chosen in a size-biased manner with sizes being $(\tilde{d}_i)_{i\in [n]}$, independently of the graph $\mathrm{CM}_n(\boldsymbol{d})$.
 Let $\mathscr{C}(V_n^*)$ denote the component containing $V_n^*$,  $D(V_n^*) = \sum_{k\in \mathscr{C}(V_n^*)} \tilde{d}_k$, and $D_i = \sum_{k\in \sCi} \tilde{d}_k$.
 Since component sizes corresponding to the components having one vertex and no edges is zero by our convention, $|\sCi| \leq D_i$ for all $i$. 
Thus, it is enough to show that, for any $\varepsilon>0$,  
\begin{equation}
 \tilde{\PR}\bigg(\sum_{i: D_i\leq \delta n^{\rho} }D_i\times \surp{\mathscr{C}_{\sss(i)}}> \varepsilon n^{\rho}\bigg)\pto 0,
 \end{equation}in the iterated limit $\lim_{\delta\to 0}\limsup_{n\to\infty}$.
The following estimate will be our crucial ingredient. 
We first prove Lemma~\ref{lem-surp-u-0} using Lemma~\ref{lem:sp-cv-n}, and the proof of Lemma~\ref{lem:sp-cv-n} will come after that.
\begin{lemma} \label{lem:sp-cv-n}
Assume that $\lambda<1$.
 Let $\delta_k=\delta k^{-0.12}$. Then, for $\delta > 0$ sufficiently small, with high probability,
\begin{equation}
 \probc{\surp{\mathscr{C}(V_n^*)}\geq K,D(V_n^*)\in (\delta_K n^\rho,2\delta_K n^{\rho})}\leq \frac{C\sqrt{\delta}}{n^{\rho}K^{1.1}},
\end{equation}
 where $C$ is a fixed constant independent of $n,\delta, K$. 
 \end{lemma}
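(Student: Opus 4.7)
The plan is to run Algorithm~\ref{algo-expl} starting from $V_n^*$, estimate $\tilde{E}\big[\mathrm{SP}(\sC(V_n^*))\mathbbm{1}\{D(V_n^*) \in I\}\big]$ for $I = (\delta_K n^\rho, 2\delta_K n^\rho)$ by a first-moment computation, and then apply Markov's inequality. Let $A_l$ denote the number of active half-edges after step $l$ of this exploration, let $\tau = \inf\{l \geq 1 : A_l = 0\}$ be the termination time, and let $J_l$ indicate whether step $l$ discovers a new vertex, so that a surplus edge corresponds to $J_l = 0$. Summing the degree contributions, $A_l + 2l = \sum_{j \in \sV_l}\tilde{d}_j \leq D(V_n^*)$ for every $l \leq \tau$, and $D(V_n^*) = 2\tau$. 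Hence on the event $\{D(V_n^*) \leq 2\delta_K n^\rho\}$ one has $\tau \leq \delta_K n^\rho$ and $A_{l-1} \leq 2\delta_K n^\rho$ for every $1 \leq l \leq \tau$.

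At step $l$, $\tilde{\PR}(J_l = 0 \mid \sF^n_{l-1}) = (A_{l-1}-1)/(\tilde{\ell}_n - 2l + 1) \leq 2A_{l-1}/\tilde{\ell}_n$ uniformly over $l \leq \delta_K n^\rho$ with high probability, since $\tilde{\ell}_n = \Theta_{\sss \PR}(n^{2\rho})$ by Lemma~\ref{lem:perc-degrees}. Combining with the previous paragraph, I would obtain
\begin{eq}\label{eq:plan-sp-first-mom}
\tilde{E}\big[\mathrm{SP}(\sC(V_n^*))\mathbbm{1}\{D(V_n^*)\in I\}\big] &\leq \sum_{l=1}^{\delta_K n^\rho}\tilde{E}\Big[\mathbbm{1}\{l\leq \tau,\, A_{l-1} \leq 2\delta_K n^\rho\}\cdot \frac{2 A_{l-1}}{\tilde{\ell}_n}\Big] \\
&\leq \frac{4\delta_K n^\rho}{\tilde{\ell}_n}\sum_{l\geq 1}\tilde{\PR}(l \leq \tau) \leq \frac{4\delta_K n^\rho}{\tilde{\ell}_n}\tilde{E}[\tau].
\end{eq}

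It remains to bound $\tilde{E}[\tau] = \tilde{E}[D(V_n^*)]/2$ by a constant (uniform in $K$) with high probability. This is precisely where the assumption $\lambda < 1$ enters: a path-counting argument identical in spirit to \eqref{path-count-ub-main}/\eqref{eq:ineq-path}, applied now directly to $\cG_n(p_c(\lambda))$, yields
\begin{eq}
\tilde{E}[D(V_n^*)] = \frac{1}{\tilde{\ell}_n}\sum_i\tilde{d}_i\,\tilde{E}[D(i)] \leq \frac{2(\tilde{\nu}_n + 1)}{1 - \tilde{\nu}_n} = \frac{2(\lambda+1)}{1-\lambda}(1+\oP(1)),
\end{eq}
using $\sum_i \tilde{d}_i^2 = \tilde{\ell}_n(\tilde{\nu}_n + 1)$ and $\tilde{\nu}_n \to \lambda < 1$ by Lemma~\ref{lem:perc-degrees}. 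Substituting this and $\tilde{\ell}_n \geq c n^{2\rho}$ whp into \eqref{eq:plan-sp-first-mom}, together with $\delta_K = \delta K^{-0.12}$, Markov's inequality gives
\begin{eq}
\tilde{\PR}(\mathrm{SP}\geq K,\, D \in I) \leq \frac{1}{K}\tilde{E}\big[\mathrm{SP}\,\mathbbm{1}\{D\in I\}\big] \leq \frac{C\delta_K}{Kn^\rho} = \frac{C\delta}{K^{1.12}n^\rho} \leq \frac{C\sqrt{\delta}}{n^\rho K^{1.1}},
\end{eq}
where the last inequality uses $\delta \leq 1$ (so $\delta \leq \sqrt{\delta}$) and $K \geq 1$ (so $K^{-1.12} \leq K^{-1.1}$). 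The only delicate step is the $O(1)$ bound on $\tilde{E}[D(V_n^*)]$, which is essentially where the subcritical hypothesis $\lambda < 1$ is used; everything else is a routine first-moment argument exploiting the fact that on $\{D(V_n^*) \leq 2\delta_K n^\rho\}$ the active-half-edge count cannot exceed $2\delta_K n^\rho$.
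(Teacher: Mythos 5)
Your proof is correct, and it takes a genuinely different route from the paper's. The paper splits the event in two: it first uses the supermartingale property of the exploration process (valid because $\tilde{\nu}_n \to \lambda<1$) and optional stopping to get $\probc{S_n(\gamma)\geq H}\leq \tE[\td_{V_n^*}]/H$ with $H=n^{\rho}K^{1.1}/\sqrt{\delta}$, and then, on the complementary event where the active half-edge count stays below $H$, it union-bounds over the locations of the $K$ surplus edges, each contributing a factor $\leq CH/\tell_n$; a Stirling estimate for the $(2\delta_K n^{\rho})^{K-1}/(K-1)!$ combinatorial factor then produces a bound of order $\delta^{K/2}K^{-1.1}n^{-\rho}\,\exptc{D(V_n^*)}$. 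You instead bound the first moment $\tE\big[\SP(\sC(V_n^*))\ind{D(V_n^*)\in I}\big]$ directly and apply Markov's inequality, replacing the optional-stopping control of $\sup_l A_l$ by the deterministic observation that on $\{D(V_n^*)\leq 2\delta_K n^{\rho}\}$ one has $A_{l-1}\leq \sum_{j\in\sV_{l-1}}\td_j\leq D(V_n^*)\leq 2\delta_K n^{\rho}$ and $\tau=D(V_n^*)/2\leq \delta_K n^{\rho}$; the conditioning is legitimate since $\{l\leq\tau\}=\{A_{l-1}>0\}$ and $\{A_{l-1}\leq 2\delta_Kn^{\rho}\}$ are $\sF^n_{l-1}$-measurable. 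Both arguments ultimately rest on the same ingredient, the path-counting bound $\exptc{D(V_n^*)}=O(1)$ with high probability (the paper's \eqref{path-count-ub-main} and \eqref{expectation:random:vert:comp}), which is exactly where $\lambda<1$ enters, and the whp events involved concern only the degree sequence, hence are uniform in $K$ and $\delta$ as required. Your resulting bound $C\delta/(K^{1.12}n^{\rho})$ is sharper in $\delta$ but much weaker in $K$ than what the paper's argument actually delivers; it nevertheless implies the stated inequality for $\delta\leq 1$, $K\geq 1$, and one can check that it still suffices for the dyadic summation in the proof of Lemma~\ref{lem-surp-u-0}, so nothing downstream breaks. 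The trade-off is that the paper's heavier machinery yields essentially geometric decay in $K$ (useful if one wanted stronger tail control on the surplus), whereas your argument is shorter and avoids both optional stopping and the combinatorial sum.
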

\begin{proof}[Proof of Lemma~\ref{lem-surp-u-0} using Lemma~\ref{lem:sp-cv-n}]
First, let us consider
the case $\lambda<1$. 
Fix any $\varepsilon, \delta >0$. Note that
\begin{equation}\label{eq:u-0-calc}
 \begin{split}
  &\tilde{\PR}\bigg( \sum_{D_i\leq \delta n^{\rho}} D_i\surp{\mathscr{C}_{\sss (i)}}> \varepsilon n^{\rho} \bigg)\leq \frac{1}{\varepsilon n^{\rho}}\tilde{\E} \bigg[\sum_{i=1}^{\infty}D_i \surp{\mathscr{C}_{\sss (i)}} \1_{\{ |D_i\leq \delta n^{\rho}\}} \bigg]
  \\&= \frac{\tilde{\ell}_n}{\varepsilon n^{\rho}}\exptc{\mathrm{SP}(\mathscr{C}(V_n^*))\1_{\{ |\mathscr{C}(V_n^*)|\leq \delta n^{\rho}\}}}\\
  &= \frac{\tilde{\ell}_n}{\varepsilon n^{\rho}}\sum_{k=1}^{\infty}\sum_{i\geq \log_2(1/(k^{0.12}\delta))}\tilde{\PR}\big(\mathrm{SP}(\mathscr{C}(V_n^*))\geq k, |\mathscr{C}(V_n^*)|\in (2^{-(i+1)}k^{-0.12}n^\rho, 2^{-i}k^{-0.12}n^\rho] \big)\\
  &\leq \frac{C}{\varepsilon} \sum_{k=1}^{\infty}\frac{1}{k^{1.1}}\sum_{i\geq \log_2(1/(k^{0.12}\delta))} 2^{-i/2} \leq \frac{C}{\varepsilon}\sum_{k=1}^{\infty}\frac{\sqrt{\delta}}{k^{1.04}}  =O(\sqrt{\delta}/\varepsilon),
 \end{split}
\end{equation} where the last-but-second step follows from Lemma~\ref{lem:sp-cv-n}, and the inequality holds with high probability.
 The proof of Lemma~\ref{lem-surp-u-0} now follows for the $\lambda <1$ case. 

 Now consider the case $\lambda > 1$. 
 Fix a large integer $R\geq 1$ such that $\lambda \sum_{i>R} \theta_i^2 <1$. This can be done because $\boldsymbol{\theta}\in \ell^{2}_{\shortarrow}$. Using \eqref{eq:CgeqT2}, for any $\delta_0>0 $, it is possible to choose $T>0$ such that 
\begin{equation}\label{eq:early-expl}
 \limsup_{n\to\infty}\prob{\text{all the vertices }1,\dots,R \text{ are explored within time }Tn^{\rho} }> 1-\delta_0.
\end{equation} 
Let $T_e$ denote the first time after $Tn^{\rho}$ when we finish exploring a component. By Theorem~\ref{thm::convegence::exploration_process}, $(n^{-\rho}T_e)_{n\geq 1}$ is a tight sequence. Let $\mathcal{G}^*_T$ denote the graph obtained by removing the components explored up to time $T_e$. Then, $\mathcal{G}^*_T$ is again a configuration model conditioned on its degrees. Let $\nu_n^*$ denote the value of the criticality parameter for $\mathcal{G}^*$. 
Then using \eqref{eqn:nu-K} and the fact that $\lambda \sum_{i>R} \theta_i^2 <1$, $\nu_n^*< 1-\varepsilon_0$ with high probability for some $\varepsilon_0>0$.
Thus, if $\mathscr{C}_{\sss(i)}^*$ denotes the $i\text{-th}$ largest component of $\mathcal{G}_T^*$, then the argument for $\lambda<1$ yields
\begin{equation}\label{surplus-positivelamba1} \lim_{T\to\infty}\lim_{\delta\to 0}\limsup_{n\to\infty}\PR\bigg(\sum_{i: |\mathscr{C}_{(i)}^*|\leq \delta n^\rho }|\mathscr{C}_{\sss(i)}^*|\times \surp{\mathscr{C}_{\sss(i)}^*}> \varepsilon n^{\rho}\bigg)=0.
\end{equation} To conclude the proof for the whole graph (with $\lambda >1$), let $$\mathcal{K}_n^T:=\{i:|\mathscr{C}_{\sss(i)}|\leq \delta n^{\rho}, |\mathscr{C}_{\sss(i)}| \text{ is explored before the time }T_e  \}.$$ Note that
\begin{equation}
 \begin{split}
  \sum_{i \in \mathcal{K}_n^T}|\mathscr{C}_{\sss (i)}|\times \mathrm{SP}(\mathscr{C}_{\sss (i)})&\leq \Big( \sum_{i\in \mathcal{K}_n^T}|\mathscr{C}_{\sss (i)}|^2\Big)^{1/2}\times \Big(\sum_{i\in \mathcal{K}_n}\mathrm{SP}(\mathscr{C}_{\sss (i)})^2 \Big)^{1/2}\\
  &\leq  \bigg( \sum_{|\mathscr{C}_{\sss (i)}|\leq \delta n^\rho}|\mathscr{C}_{\sss (i)}|^2\bigg)^{1/2}\times \mathrm{SP}(T_e),
 \end{split}
\end{equation}where $\mathrm{SP}(t)$ is the number of surplus edges explored up to time $tn^\rho$ and we have used the fact that $\sum_{i\in\mathcal{K}_n^T}\mathrm{SP}(\mathscr{C}_{\sss (i)})^2\leq (\sum_{i\in\mathcal{K}_n^T}\mathrm{SP}(\mathscr{C}_{\sss (i)}))^2\leq \mathrm{SP}(T_e)^2$. From Lemma~\ref{lem:surp:poisson-conv} and Proposition~\ref{prop:l2-tight} we can conclude that for any $T>0$,
\begin{equation}\label{surplus-positivelamba2}
 \lim_{\delta\to 0}\limsup_{n\to\infty}\PR\bigg(\sum_{i \in \mathcal{K}_n^T}|\mathscr{C}_{\sss (i)}|\times \mathrm{SP}(\mathscr{C}_{\sss (i)})>\varepsilon n^{\rho}\bigg)=0.
\end{equation}
  The proof is now complete for the case $\lambda > 1$ by combining \eqref{surplus-positivelamba1} and \eqref{surplus-positivelamba2}.
\end{proof}

\begin{proof}[Proof of Lemma~\ref{lem:sp-cv-n}]
We use a generic constant $C$ to denote a positive constant independent of $n,\delta,K$. 
Consider the graph exploration described in Algorithm~\ref{algo-expl}, but now we start by choosing vertex $V_n^*$ at Stage 0 and declaring all its half-edges active. 
The exploration process is still given by \eqref{defn:exploration:process} with $S_n(0)=\tilde{d}_{V_n^*}$.
Note that $\mathscr{C}(V_n^*)$ is explored when $\mathbf{S}_n$ hits zero, and the hitting time at zero gives $D(V_n^*)/2$.
For $H>0$,  let 
\begin{equation} \label{defn:gamma}
\gamma := \inf \{ l\geq 1: S_n(l)\geq H \text{ or }  S_n(l)= 0 \}\wedge 2\delta_K n^\rho.
\end{equation} 
Here, we let $\mathcal{A}$ be the intersection of all the events described in Lemma~\ref{lem:perc-degrees}, which are shown to hold with high probability.
Recall that we write $\mathscr{F}_l = \sigma (\mathcal{I}_i(l): i\in [n]) \cap \mathcal{A}$.  
Note that
\begin{equation}\label{exploration:super_martingale}
\begin{split}
 \exptc{S_n(l+1)-S_n(l)\mid \mathscr{F}_l}&= \sum_{i\in [n]}\tilde{d}_i\probc{i\notin \mathscr{V}_l, i\in \mathscr{V}_{l+1}\mid \left( \mathcal{I}_i^n(l)\right)_{i=1}^n} -2\\
 &= \frac{ \sum_{i\notin \mathscr{V}_l}\tilde{d}_i^2}{\tilde{\ell}_n-2l-1}-2\leq \frac{ \sum_{i\in [n]}\tilde{d}_i^2}{\tilde{\ell}_n-2l-1}-2\\:
 & =(\lambda-1) +\frac{2l+1}{\tilde{\ell}_n-2l-1}\times \frac{\sum_{i\in [n]}\tilde{d}_i^2}{\tilde{\ell}_n}   \leq 0,
\end{split}
\end{equation} uniformly over $l\leq 2\delta_K n^{\rho}$ for all small $\delta >0$ and large $n$, where the last step uses that $\lambda<1$. Therefore, $\{S_n(l)\}_{l= 1}^{2\delta_Kn^{\rho}}$ is a super-martingale. The optional stopping theorem now implies
  \begin{equation}
   \tilde{\mathbbm{E}}\left[\td_{V_n^*}\right] \geq \tilde{\mathbbm{E}}\left[S_n(\gamma)\right] \geq H \tilde{\mathbbm{P}}\left( S_n(\gamma) \geq H \right).
  \end{equation} Thus,
  \begin{equation} \label{eqn::bound_geq_H_at_stopping_time}
    \tilde{\mathbbm{P}}\left( S_n(\gamma) \geq H \right) \leq \frac{\tilde{\mathbbm{E}}[d_{V_n^*}]}{H}.
  \end{equation}
Put $H= n^{\rho}K^{1.1}/\sqrt{\delta}$. To simplify the writing, we  write $S_n[0,t]\in A$ to denote that $S_n(l)\in A,$ for all $ l\in [0,t]$.  Notice that
 \begin{equation}\label{surp:sup:less}\begin{split}
  &\probc{\surp{\mathscr{C}(V_n^*)}\geq K,D(V_n^*)\in (\delta_K n^{\rho},2\delta_Kn^{\rho})}\\
  &\leq \probc{S_n(\gamma)\geq H}+\probc{\surp{\mathscr{C}(V_n^*)}\geq K, S_n[0,2\delta_K n^{\rho}]< H, S_n[0,\delta_K n^{\rho}]>0}.
  \end{split}
 \end{equation}Now,
 \begin{equation}
  \begin{split}
   &\probc{\surp{\mathscr{C}(V_n^*)}\geq K, S_n[0,2\delta_K n^{\rho}]< H, S_n[0,\delta_K n^{\rho}]>0}\\
  &\leq \sum_{1\leq l_1<\dots< l_K\leq 2\delta_K n^{\rho}} \probc{\text{surpluses occur at times } l_1,\dots,l_K,  S_n[0,2\delta_K n^{\rho}]< H, S_n[0,\delta_K n^{\rho}]>0}\\
  &=\sum_{1\leq l_1<\dots<l_K\leq 2\delta_K n^{\rho}}\exptc{\ind{0<S_n[0,l_K-1]<H, \mathrm{SP}(l_K-1)=K-1}Y},
  \end{split}
 \end{equation}
 where
 \begin{equation}
 \begin{split}
  Y&=\probc{K^{th}\text{ surplus occurs at time }l_K,  S_n[l_K,2\delta_Kn^{\rho}]< H, S_n[l_K,\gamma]>0\mid \mathscr{F}_{l_K-1} }\\
  &\leq \frac{CK^{1.1}n^\rho}{\tilde{\ell}_n\sqrt{\delta}}\leq \frac{CK^{1.1}}{n^{\rho}\sqrt{\delta}}.
 \end{split}
 \end{equation}Therefore, using induction, \eqref{surp:sup:less} yields
 \begin{equation}\label{exploration:bounded:surplus}
 \begin{split}
  &\probc{\surp{\mathscr{C}(V_n^*)}\geq K, S_n[0,2\delta_K n^{\rho}]< H, S_n[0,\delta_K n^{\rho}]>0}\\
  &\leq C\bigg( \frac{K^{1.1}}{\sqrt{\delta}n^{\rho}}\bigg)^K\frac{(2\delta n^{\rho})^{K-1}}{K^{0.12(K-1)}(K-1)!}\sum_{l_1=1}^{2\delta_K n^{\rho}}\probc{D(V_n^*)|\geq l_1}\leq C \frac{\delta^{K/2}}{K^{1.1}n^{\rho}}  \exptc{D(V_n^*)},
  \end{split}
 \end{equation}where we have used the fact that $\#\{1\leq l_2,\dots,l_K\leq 2\delta n^{\rho}\}=(2\delta n^{\rho})^{K-1}/(K-1)!$ and Stirling's approximation for $(K-1)!$ in the last step. Since $\lambda <1$, we can use \eqref{path-count-ub-main} to conclude that, for all sufficiently large $n$,
 \begin{equation} \label{expectation:random:vert:comp}
  \exptc{D(V_n^*)-1}\leq C,
 \end{equation} with high probability for some constant $C>0$.
 Thus, we get the desired bound for \eqref{surp:sup:less}.
  The proof of Lemma~\ref{lem:sp-cv-n} is now complete.
\end{proof}

\end{document}